\newcommand{\E}{\mathbb{E}}
\newcommand{\EE}{\mathbb{E}}
\newcommand{\R}{\mathbb{R}}
\newcommand{\de}{\mathrm{d}}
\newcommand{\Law}{\operatorname{Law}}
\newcommand{\cE}{\mathcal{E}}
\newcommand{\cP}{\mathcal{P}}
\newcommand{\cT}{\mathcal{T}}
\newcommand{\eps}{\varepsilon}
\newcommand{\di}{\mathrm{d}}
\newcommand{\Om}{\Omega}
\DeclareMathOperator{\argmin}{argmin}
\DeclarePairedDelimiter{\abs}{\lvert}{\rvert}
\DeclarePairedDelimiter{\norm}{\lVert}{\rVert}
\newcommand{\real}{\mathbb{R}}
\newcommand{\prob}{\mathbb{P}}
\newcommand{\ind}{\mathds{1}}
\newcommand{\tv}{\tilde{v}}
\newcommand{\tsigma}{\tilde{\sigma}}
\newcommand{\sF}{\mathscr{F}}
\newcommand{\diag}{\operatorname{diag}}
\newcommand{\trace}{\operatorname{tr}}
\newcommand{\green}[1]{\textcolor{orange}{#1}}
\numberwithin{equation}{section}
\newtheorem{definition}{Definition}[section]
\newtheorem{theorem}[definition]{Theorem}
\newtheorem{lemma}[definition]{Lemma}
\newtheorem{proposition}[definition]{Proposition}
\newtheorem{remark}[definition]{Remark}
\newtheorem{corollary}[definition]{Corollary}
\title[A general perspective on CBO methods with stochastic rate of information]{A general perspective on CBO methods\\with stochastic rate of information}
\author[S. Almi]{Stefano Almi}
\address[Stefano Almi]{Dipartimento di Matematica e Applicazioni ``R. Caccioppoli'', Unversit\`{a} di Napoli ``Federico II'', Via Cintia, 80126 Napoli, Italy. ORCID: 0000-0001-7308-221X.}
\email{stefano.almi@unina.it}
\author[A. Baldi]{Alessandro Baldi}
\address[Alessandro Baldi]{Dipartimento di Scienze Matematiche ``G. L. Lagrange'', Politecnico di Torino, Corso Duca degli Abruzzi, 24, 10129 Torino, Italy. ORCID: 0009-0006-4778-8431.}
\email{alessandro.baldi@polito.it}
\author[M. Morandotti]{Marco Morandotti}
\address[Marco Morandotti]{Dipartimento di Scienze Matematiche ``G. L. Lagrange'', Politecnico di Torino, Corso Duca degli Abruzzi, 24, 10129 Torino, Italy. ORCID: 0000-0003-3528-6152.}
\email{marco.morandotti@polito.it}
\author[F. Solombrino]{Francesco Solombrino}
\address[Francesco Solombrino]{Dipartimento di Scienze e Tecnologie Biologiche ed Ambientali, Universit\`{a} del Salento, Centro Ecotekne Pal.~B - S.P.~6, Lecce, Italy. ORCID: 0000-0003-4251-3851.}
\email{francesco.solombrino@unisalento.it}
\date{\today}
\keywords{Consensus-Based Optimization, stochastic information rate, concentration towards consensus}
\subjclass[2020]{34F05, 
                60K35, 
                93A16, 
                60H10, 
                90C56, 
                35Q84. 
                }
\begin{document}

\maketitle

\begin{abstract}
This paper studies a class of Consensus-Based Optimization (CBO) models featuring an additional stochastic rate of information, modeling the agents' knowledge of the environment and energy landscape. The well-posedness of the stochastic system is proved, together with its finite-particle approximation and the mean-field convergence to a kinetic PDE.  Particles are shown to concentrate around the consensus point under mild assumptions on the initial spatial distribution and initial level of knowledge. In particular, the analysis unveils that a positive, however small, initial level of knowledge is enough for convergence to consensus to happen. The framework presented is general enough to include the first instances of CBO proposed in the literature.
\end{abstract}

\section{Introduction}

Consensus-Based Optimization (CBO) is a recent stochastic optimization paradigm inspired by collective dynamics in multi-agent systems. It provides a powerful framework for tackling non-convex optimization problems in high-dimensional spaces. CBO algorithms operate by evolving a population of interacting particles whose dynamics are driven by an interplay of consensus formation, stochastic exploration, and a bias toward low function values. These dynamics naturally lend themselves to a probabilistic and mean-field theoretical analysis.

Initial motivations for CBO arise from a variety of heuristic swarm intelligence algorithms, such as Particle Swarm Optimization (PSO) \cite{GrassiPareschi, KennedyEberhart1995}, Simulated Annealing \cite{Aarts, Kirkpatrick1983}, Differential Evolution~\cite{StornPrice1997}, Evolutionary Programming~\cite{BackFogel, Fogel}, or Genetic Algorithms~\cite{Holland}. These methods have demonstrated empirical success but often lack rigorous analytical foundations or suffer from scalability issues. In contrast, the CBO framework is deeply connected with the mathematical theory of mean-field particle systems and kinetic equations, which have found applications in statistical physics, control theory, and mathematical biology \cite{Carrillo2010,FornasierSolombrino2014, Rossi-review, Toscani2006}.

The recent works~\cite{Carrilloetal, Pinnau} proposed a mathematically grounded CBO scheme based on stochastic differential equations. Each of the \( N \) particles evolves according to
\begin{equation} \label{eq:sde}
    \di X_t^i = -\lambda (X_t^i - \overline{X}_t^n)\,\di t + \varsigma \|X_t^i - \overline{X}_t^n \|\,\di B_t^i,
\end{equation}
where \( \lambda > 0 \) is the consensus rate, \( \varsigma > 0 \) is the noise strength, and \( B_t^i \) are independent Brownian motions. The consensus point \( \overline{X}_t^n \) is defined via a Gibbs-weighted average
\begin{equation} \label{eq:weighted_mean}
    \overline{X}_t^n = \frac{\sum_{j=1}^N X_t^j \exp(-n \mathcal{E} (X_t^j))}{\sum_{j=1}^N \exp(-n \mathcal{E} (X_t^j))},
\end{equation}
with $\mathcal{E} \colon \mathbb{R}^d \to \mathbb{R}$ the objective function and $n \in \mathbb{N}$ a concentration parameter. Notice that in~\cite{Carrilloetal} $n$ is replaced by a $\alpha>0$. We restrict our presentation to integer coefficients for clarity of exposition.

As \( N \to \infty \), the empirical measure \( \mu_t^N = \frac{1}{N} \sum_{i=1}^N \delta_{X_t^i} \) converges (formally) to a probability density \( \rho(x,t) \), satisfying a nonlinear Fokker--Planck equation (see~\cite{HuangQiu})
\begin{equation}
 \label{eq:fp}
    \partial_t \rho = \nabla \cdot \left( \lambda (x - \bar{x}_t^n)\rho \right) + \frac{\varsigma^2}{2} {\rm div} \left( \|x - \bar{x}_t^n \|^2 \nabla \rho \right),
\end{equation}
with
\begin{equation}
\label{e:intro100}
    \bar{x}_t^n = \frac{\int x e^{-n \mathcal{E} (x)} \rho(x,t)\,dx}{\int e^{-n \mathcal{E} (x)} \rho(x,t)\,dx}.
\end{equation}
This formulation enables the use of tools from PDEs, probability theory, and optimal transport to study the long-time behavior of the system. In particular, the concentration of \( \rho \) around global minimizers is justified using the Laplace principle \cite{DemboZeitouni1998,DupuisEllis1997}, which implies
\[
    \lim_{n \to \infty} \bar{x}_t^n = \argmin_{x \in \R^{d}} \mathcal{E}(x),
\]
under suitable regularity and coercivity assumptions on $\mathcal{E}$.

Building on this foundation, Fornasier, Klock, and Riedl \cite{FKR} provided a rigorous convergence analysis of the CBO method in the mean-field setting. Their work proves that the mean-field solution \( \rho(x,t) \) concentrates globally on minimizers of $\mathcal{E}$ as \( t \to + \infty \). A central theme in this analysis is the role of the \emph{information rate} \( \lambda \), which governs the balance between convergence speed and exploration. Larger values of \( \lambda \) lead to faster consensus formation but may suppress the diversity needed to escape local minima. This trade-off echoes similar phenomena in stochastic gradient algorithms \cite{Cheng2018, Raginsky2017} and in control-based optimization dynamics \cite{Benamou2015,Pavliotis2014}. Further extension of the CBO model have been discussed, for instance, in~\cite{Herty-Pareschi, CarrilloVaes} for constrained optimization, or in~\cite{CarrilloJin, Sunnen1, Sunnen2, Sunnen3} for minimization on hypersurfaces and applications to machine learning.

The works \cite{Carrilloetal} and \cite{FKR} provide a rigorous and rich mathematical theory for CBO, bridging stochastic optimization, collective dynamics, and PDE analysis. Their framework offers not only practical algorithms but also deep insights into the interplay between randomness, information aggregation, and global convergence. Notice that in those contributions, as in our own paper, we will assume the objective energy~$\mathcal{E}$ to have a unique global minimizer (we refer to~\cite{FornasierSun} for some results concerning multiple minimizers).

This leads us to the purpose of our present contribution, which has a twofold perspective:
\begin{itemize}
\item The above results are inserted in a general, abstract toolbox capturing the main features of this approach;
\item An additional, in our opinion quite relevant, modeling possibility is considered, as the \emph{information rate} \( \lambda \) is  itself  allowed to be a time-evolving unknown of the underlying SDE system.
\end{itemize}

The particle system we consider, generalising  the stochastic counterpart of \eqref{eq:sde}, is formulated as a McKean-Vlasov SDE of the form
\begin{equation}\label{e:intro101}
    \begin{dcases}
        \de X_t = \nu\,v_{\rho_t}(X_t,\Lambda_t)\,\de t + \varsigma  \| v_{\rho_t}(X_t,\Lambda_t) \| \,\de B_t\,, \\
        \de \Lambda_t = \cT_{\Sigma_t}(X_t,\Lambda_t)\,\de t,
    \end{dcases}
\end{equation}
with
\begin{gather*}
    \Sigma_t = \Law(X_t,\Lambda_t), \qquad \rho_t = \Law(X_t)\,.
\end{gather*}
Here $\nu,\varsigma$ are non-negative parameters and
\begin{equation*}
    v_\mu(x,\lambda) \coloneqq \lambda( f_{n}(\mu) -x  )  + (1-\lambda) ( e(\mu) - x) \,,
\end{equation*}
with $e(\mu)$ being the expected value of a probability distribution $\mu$. The key features of the above equations are encoded by two terms:
\begin{itemize}
\item $f_{n} (\mu)$ is a nonlocal drift term generalizing the Gibbs energies considered in~\cite{Carrilloetal, FKR} (cf.~\eqref{e:intro100}). We show that, instead of the weighted barycenter $\bar{x}^{n}_{t}$, one can consider a more general class of dependencies on~$\mu$ and~$\mathcal{E}$, provided some abstract properties are satisfied (cf.~$(f1)$, $(f2)$, and~$(f3)$ below).

\item $\cT$ is the generator of a Markov-type jump process governing the evolution of the information rate $\Lambda_{t}$ of a single agent, which may  grow or decrease in time, depending on the interaction with the rest of the population, and hence on the global state of the system~$\Sigma_{t}$.
\end{itemize}

The interpretation of system~\eqref{e:intro101} is that an informed agent is expected to have $\lambda \sim 1$ and to move towards the temporary consensus $f_{n} (\mu)$, which favors concentration towards the minimizer of~$\mathcal{E}$. On the contrary, in presence of a low rate of information $\lambda \sim 0$, the lack of knowledge pushes the agent to follow the average position of the crowd $e(\mu)$. The motion is affected by some white noise, which decreases with the increase of the information rate in the system~\eqref{e:intro101}, and models the random behavior of a non-informed crowd. The effect of noise is further tuned by~$\varsigma>0$. Such parameter will be assumed to be sufficiently small in our convergence analysis, with bounds only depending on the space dimension~$d$.  The PDE counterpart of~\eqref{e:intro101} is a Fokker-Planck equation
  \begin{equation}
    \label{e:intro102}
        \partial_{t} \Sigma_{t} - \frac{\varsigma^{2}}{2} \Delta_{x} ( \| v_{\rho_{t}} (x, \lambda) \|^{2} \Sigma_{t}) = {\rm div} \Big[ \big( \nu \, v_{\rho_{t}} (x, \lambda) \, , \,  \cT_{\Sigma_{t}} (x, \lambda)  \big) \Sigma_{t} \Big]\,,
    \end{equation}
 on the product space $\R^{d} \times [0, 1]$. Differently from~\cite{FKR}, we find it convenient to work directly on~\eqref{e:intro101} to recover our well-posedness and convergence results, which we describe below.
 
 The main contributions of the present paper concern
 \begin{itemize}
 \item[$(i)$] A global well-posedness theory for system~\eqref{e:intro101}, which extends to the stochastic setting the work of~\cite{AFMS, MorSol} for multi-agent multi-label systems. As it was there, the state space is a convex subset of a Banach space, hence well-posedness of evolutions can be obtained only under suitable assumptions on the operator~$\cT$, in the spirit of~\cite{BM, Brezis, D'Onofrio-Hernandez}. We refer to the conditions $(T1)$ and $(T2)$ below.
 
 \item[$(ii)$] Our main Theorem~\ref{t:systems are close} shows that under the assumptions reported on~$f_{n}$ and~$\cT$, if some information is present in the system at initial time, then spatial concentration around the minimizer of~$\mathcal{E}$ will take place in finite time. This generalizes the main result of~\cite{FKR} to other choices of the drift term $f_{n}$ and to the stochastic nature of the information rate $\lambda$. In Section~\ref{s:application} we further show that their results are indeed a particular case of our analysis, as their case study complies with our abstract setting.
 \end{itemize}
In our convergence analysis we assume, without loss of generality, that the targeted global minimizer is $x=0$, with $\mathcal{E}(0) = 0$. We first consider an auxiliary system, where the drift term~$f = 0$
\begin{equation}\label{e:intro103}
\begin{cases}
\di X_{t} = (- X_{t} + ( 1 - \Lambda_{t}) e(\mu_{t}) ) ) \di t + \varsigma \| X_{t} - ( 1 - \Lambda_{t}) e (\mu_{t} ) \| \di B_{t}\,,\\[1mm]
\di \Lambda_{t} = \cT_{\Psi_{t}}(X_{t},\Lambda_{t})\, \di t\,,
\end{cases}
\end{equation}
with $\Psi_{t} = \Law (X_{t}, \Lambda_{t})$ and $\mu_{t} = \Law (X_{t})$. By means of Gr\"onwall-type estimates and It\^o calculus, we show that if the rate of information~$\Lambda_{t}$ is not vanishing too quickly as $t \to + \infty$, that is,
\begin{equation}
\label{e:intro105}
\int_{0}^{+\infty} \E (\Lambda_{t}) \,\di t = +\infty\,,
\end{equation}
then solutions to~\eqref{e:intro103} converge almost surely to the minimizer $x=0$ as $t \to + \infty$. In Theorem~\ref{t:concentration2}, we indeed prove that under our assumption on~$\cT$, property~\eqref{e:intro105} can be guaranteed, entailing the desired convergence.

To recover Theorem~\ref{t:systems are close}, it remains to compare the behavior of the two systems~\eqref{e:intro102} and~\eqref{e:intro103}, for large values of~$n$. The crucial assumption is that $f_{n} (\mu)$ converges to $0$ uniformly, provided that some upper bound on the second moment and some lower bound on the mass around $0$ are ensured along the motion. The prototypical example of~\cite{Carrilloetal, FKR} fits into this framework, as shown in Section~\ref{s:application}, with the required upper and lower bounds on~$\rho_{t}$ recovered in Theorem~\ref{t:well-posedness} and Proposition~\ref{e:mass-ball-r}, respectively.

\medskip

\noindent{\bf Outlook.} We manage to provide a comprehensive abstract toolbox for multi-agent Consensus Based Optimization, generalizing the results~\cite{FKR} to the case of a stochastic varying rate of information. Desirable developments of of our analysis take into account the following directions:
\begin{itemize}
\item The addition of noisy effects acting directly on $\Lambda_{t}$\,, still guaranteeing the convex constraint $\Lambda_{t} \in [0, 1]$.

\item In a more general multi-population setting, $\Lambda_{t}$ can be interpreted as a stochastic label representing the probability that an agent has to belong to a certain subset of the population. This is a typical framework for population dynamics~\cite{During, Tosin, Toscani2006}. It would be interesting to extend our model to encompass multiple, possibly conflicting populations.

\item While in our analysis deals with a single objective with a single target minimizer, the study of multi-target and/or multi-objective minimization is of interest for application purposes~\cite{Herty-Pareschi-2, FornasierSun, Totzeck-multiobjective}.
\end{itemize}

\section{Preliminaries}
\label{s:preliminaries}
\subsection{Notation}
For $x \in \R^{d}$ and $r>0$, we denote by $B_{r} (x)$ the open ball of center $x$ and radius $r$. If $x=0$, we will adopt the short-hand notation $B_{r} = B_{r} (0)$. We denote by~$\| \cdot \|$ the Euclidean norm in~$\R^{d}$ ($d>1$) and by~$| \cdot|$ the modulus in~$\R$.
For $p \geq 1$ we denote by $\cP_{p}(\R^{d})$ the set of probability measures $\mu$ over $\R^{d}$ with finite $p$-th moment
\begin{displaymath}
m_{p} (\mu) \coloneqq \bigg( \int_{\R^{d}} \| x\|^{p} \, \di \mu(x) \bigg) ^{\frac{1}{p}} <+\infty\,.
\end{displaymath}
Such space is endowed with the classical $p$-Wasserstein distance, which we denote by $W_{p}$\,.

\subsection{A well-posedness result}

We start by recalling a well-posedness result for the system
\begin{equation}
\label{e:system-SDE-aux}
    \begin{dcases}
        \de X_t = \nu w_{\rho_t}(X_t,\Lambda_t)\,\de t + \varsigma \norm{w_{\rho_t}(X_t,\Lambda_t)}\,\de B_t\,, \\
        \de \Lambda_t = \cT_{\Sigma_t}(X_t,\Lambda_t)\,\de t,\\
         \Sigma_t = \Law(X_t,\Lambda_t), \\
         (X_{0}, \Lambda_{0}) = (\widehat{X}_{0}, \widehat{\Lambda}_{0}),\\
         \rho_t = (\pi_x)_\sharp \Sigma_t = \Law(X_t)\,.
    \end{dcases}
\end{equation}
Here, $(\widehat{X}_{0}, \widehat{\Lambda}_{0})$ are suitable initial conditions, $\pi_x \colon \R^d\times[0,1] \to \R^d$ is the canonical projection on the first factor, $\nu,\varsigma$ are non-negative parameters. We make the following assumptions on the velocity field $w_{\mu} \colon \R^{d} \times [0, 1] \to \R^{d}$ and on the operator~$\cT_{\Psi} \colon \R^{d} \times [0, 1] \to \R$, defined for $\mu \in \cP_{1} (\R^{d})$ and $\Psi \in \cP_{1} (\R^{d} \times [0, 1])$:

\begin{enumerate}
\item[$(w1)$] There exists $L_{w}>0$ such that for every $\Psi_{1}, \Psi_{2} \in \cP_{1} (\R^{d}\times[0,1])$, every $x_{1}, x_{2} \in \R^{d}$, and every $\lambda_{1}, \lambda_{2} \in [0, 1]$
\begin{equation*}
\| w_{\mu_{1}} (x_{1}, \lambda_{1}) - w_{\mu_{2}} (x_{2}, \lambda_{2}) \| \leq L_{w} \big( \| x_{1} - x_{2}\| + |\lambda_{1} - \lambda_{2}| + W_{1} (\Psi_{1} , \Psi_{2}) \big) ,
\end{equation*}
where $\mu_i \coloneqq (\pi_x)_\sharp\Psi_i\,$, for $i = 1,2$.
\item[$(T1)$] The exists $L_\cT>0 $ such that for all $\Psi_1,\Psi_2 \in \cP_1(\R^d\times[0,1])$ and $(x_1,\lambda_1), (x_2,\lambda_2) \in \R^d\times[0,1]$
\begin{equation*}    
\abs{\cT_{\Psi_1}(x_1,\lambda_1) - \cT_{\Psi_2}(x_2,\lambda_2)} \le L_\cT(\norm{x_1 - x_2} + \abs{\lambda_1 - \lambda_2} + W_1 (\Psi_1,\Psi_2));
\end{equation*}
\item[$(T2)$] There exists $\theta > 0$ such that for all $\Psi \in \cP_1(\R^d\times[0,1])$ and $(x,\lambda) \in \R^d \times [0,1]$
\begin{equation*}
\lambda + \theta \cT_\Psi(x,\lambda) \in [0,1].
\end{equation*}
\end{enumerate}

\begin{remark}
Notice that conditions~$(w1)$, $(T1)$, and $(T2)$ are inspired by the corresponding assumptions in~\cite{AFMS, BM, MorSol}. In the present setting, an adaptation is required, which accounts for extra finite moments. This is crucial for the analysis of CBO-type of systems (see~\cite{Carrilloetal, FKR}).
\end{remark}

\begin{proposition}
\label{p:existence-truncated}
Let $T > 0$, let $B \colon \Omega \to C([0,T],\R^d)$ be a $d$-dimensional standard Brownian motion defined on a filtered probability space $\bigl(\Omega, \sF, (\sF_t)_{t\in[0,T]},\prob\bigr)$, $\widehat{X}_0\colon\Omega\to\R^d$ an $\sF_0$-measurable random variable belonging to $L^p(\Omega,\sF,\prob)$ with $p \geq 2$, and $\widehat{\Lambda}_0\colon\Omega\to [0,1]$ an $\sF_0$-measurable random variable. Under the assumptions $(w1)$, $(T1)$, and $(T2)$, there exists an $(\sF_t)_t$-adapted continuous stochastic process $Y = (X,\Lambda ) \colon \Omega \to C\bigl([0,T],\R^d\times [0,1]\bigr)$ which satisfies the equation
\begin{equation}
\label{e:integral-equation}
\begin{dcases}
    X_t =  \widehat{X}_0 + \int_0^t \nu \, w_{\rho_{s}}(X_s,\Lambda_s)\,\de s + \int_0^t\varsigma \norm{w_{\rho_{s}} (X_s,\Lambda_s)}\,\de B_s\,, \\
    \Lambda_t = \widehat{\Lambda}_0 +\int_0^t \cT_{\Sigma_s}(X_s,\Lambda_s)\,\de s,
\end{dcases}
\end{equation}
with $\Sigma_t = \Law(X_t, \Lambda_{t})$ (and $\rho_t = (\pi_x)_\sharp \Sigma_t = \Law(X_t)$), $\prob$-almost surely and for all $t \in [0,T]$. The solution satisfies the estimate
\begin{equation}
\label{e:4-moment}
    \E\biggl[\sup_{t\in[0,T]}\norm{Y_t}_{\R^d\times[0,1]}^p\biggr] \le C \biggl(1 + \E\bigl[\norm{\widehat{X}_0}^p\bigr]\biggr)
\end{equation}
for a constant $C = C(\rho , T) >0$. 
Moreover, the map $t \mapsto \Sigma_t$ belongs to $C([0,T],(\cP_p(\R^d \times [0,1]),W_p))$ 
and the solution is pathwise unique in the class of (strong) solutions such that the curve $t \mapsto m_1(\rho_t)$ is bounded over $[0,T]$.

\end{proposition}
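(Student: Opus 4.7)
The plan is to construct the solution via a Picard-type fixed point on the space $\cC \coloneqq C([0,T],(\cP_p(\R^d \times [0,1]),W_p))$, combined with standard Itô theory for decoupled SDEs. Given a curve $\Sigma \in \cC$, freeze the law dependence by setting $\rho_s \coloneqq (\pi_x)_\sharp \Sigma_s$: assumptions $(w1)$ and $(T1)$ then make the coefficients $w_{\rho_s}$ and $\cT_{\Sigma_s}$ globally Lipschitz in $(x,\lambda)$, uniformly in $s \in [0,T]$, and since $\|w_\mu(x,\lambda)\|$ is itself Lipschitz thanks to $(w1)$, the diffusion coefficient of the $X$-equation inherits the same property. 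Classical strong existence/uniqueness then yields a unique $(\sF_t)$-adapted solution $(X^\Sigma, \Lambda^\Sigma)$, and I define $\Phi \colon \cC \to \cC$ by $\Phi(\Sigma)_t \coloneqq \Law(X^\Sigma_t, \Lambda^\Sigma_t)$. A fixed point of $\Phi$ solves~\eqref{e:integral-equation}.

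A preliminary point is to check that $\Lambda^\Sigma_t \in [0,1]$. Since the $\Lambda$-equation is a pathwise ODE, $(T2)$ gives $\cT_\Psi(x,0) \geq 0$ and $\cT_\Psi(x,1) \leq 0$, and standard ODE viability/comparison arguments keep $\Lambda^\Sigma$ in $[0,1]$ as soon as $\widehat{\Lambda}_0 \in [0,1]$. For the moment bound~\eqref{e:4-moment}, $(w1)$ applied against a reference point yields the sublinear estimate $\|w_\mu(x,\lambda)\| \leq C(1 + \|x\| + m_1(\mu))$; Itô's formula applied to $\|X_t\|^p$, together with Burkholder--Davis--Gundy for the stochastic integral, produces a closed Grönwall-type inequality for $\E[\sup_{[0,t]}\|X_s\|^p]$, using that the $\Lambda$-contribution is uniformly bounded. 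This also confirms that the iterates of $\Phi$ remain in a bounded subset of $\cC$.

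The core is the contraction step. Given $\Sigma^1,\Sigma^2 \in \cC$, I couple the two solutions through a common initial datum and Brownian motion and apply Itô's formula to $\|Y^{\Sigma^1}_t - Y^{\Sigma^2}_t\|^p$. Using $(w1)$, $(T1)$, the Lipschitz property of $x \mapsto \|w_\mu(x,\lambda)\|$, and Burkholder--Davis--Gundy, one obtains an estimate of the form
\[
\E\Bigl[\sup_{r \le t}\|Y^{\Sigma^1}_r - Y^{\Sigma^2}_r\|^p\Bigr] \le C \int_0^t \E\bigl[\|Y^{\Sigma^1}_s - Y^{\Sigma^2}_s\|^p\bigr]\,\de s + C \int_0^t W_1(\Sigma^1_s,\Sigma^2_s)^p \, \de s.
\]
Since $W_p^p(\Phi(\Sigma^1)_t,\Phi(\Sigma^2)_t) \le \E[\|Y^{\Sigma^1}_t - Y^{\Sigma^2}_t\|^p]$ and $W_1 \le W_p$, Grönwall's inequality yields a contraction on some interval $[0,\tau]$, and the moment bound allows restarting over successive sub-intervals of uniform length to cover all of $[0,T]$; equivalently, a Bielecki-type weighted norm $\sup_t e^{-Kt}(\cdot)$ on $\cC$ produces a contraction directly on the whole horizon.

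Pathwise uniqueness in the prescribed class follows from the same coupling estimate applied to two strong solutions of~\eqref{e:integral-equation} with common data: in that case $\Sigma^1_s$ and $\Sigma^2_s$ are the laws of the solutions themselves, so $W_1(\Sigma^1_s,\Sigma^2_s)^p \le \E[\|Y^{\Sigma^1}_s - Y^{\Sigma^2}_s\|^p]$ is absorbed into the Grönwall term. Continuity of $t \mapsto \Sigma_t$ in $W_p$ then follows from path-continuity of $Y$ combined with the uniform-in-time $p$-moment bound, which grants uniform integrability of $\|Y_t\|^p$. The main obstacle I anticipate is handling the two distinct measure-dependencies---$w_\mu$ on $\rho = (\pi_x)_\sharp \Sigma$ and $\cT_\Psi$ on the full $\Sigma$---while carrying all estimates in $W_p$ rather than $W_1$; choosing between sub-interval gluing and a Bielecki weighted norm is what makes the contraction close cleanly on the entire $[0,T]$.
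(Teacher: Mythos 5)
Your construction is correct, but it is not the route the paper takes: the paper does not prove Proposition~\ref{p:existence-truncated} directly at all, it simply observes that under $(w1)$, $(T1)$, $(T2)$ the system is a particular case of \cite[Theorem~4.2]{BM} and imports existence, the $p$-moment bound, continuity of $t\mapsto\Sigma_t$, and pathwise uniqueness from there. What you wrote is essentially the self-contained McKean--Vlasov argument one would expect to sit behind that citation: freeze the measure curve, solve the resulting classical Lipschitz SDE/ODE system, map back to its law, and close a Banach fixed point on $C([0,T],(\cP_p,W_p))$ via a Bielecki-type weighted norm, with $(w1)$ giving both the sublinearity (hence the moment bound, with the constant depending on the frozen curve, consistent with $C=C(\rho,T)$) and the uniform-in-$\Sigma$ contraction constant, $(T2)$ plus Lipschitz uniqueness giving invariance of $[0,1]$ for the $\Lambda$-component, and the same coupling estimate giving pathwise uniqueness in the class where $t\mapsto m_1(\rho_t)$ is bounded. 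The trade-off is the usual one: the paper's citation is shorter and delegates the technical bookkeeping to a general theorem tailored to exactly these hypotheses, while your direct proof makes transparent where each assumption enters, at the cost of a few details you should state explicitly if you write it out in full --- most notably that $\cT_\Psi(\cdot,\lambda)$ must be extended Lipschitz-continuously to $\lambda\in\R$ (e.g.\ by composing with the projection onto $[0,1]$) before invoking classical well-posedness, with the viability argument then showing the extension is never seen, and that the It\^o/BDG estimates for general $p\ge 2$ and the completeness of the weighted metric space are carried out carefully. Neither of these is a gap in the strategy, only in the level of detail.
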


\begin{proof}
The well-posedness result and estimate \eqref{e:4-moment} 
follow directly from~\cite[Theorem~4.2]{BM}, of which system \eqref{e:system-SDE}, under the hypotheses $(w1)$, $(T1)$, and $(T2)$, constitutes a particular case of application.
\end{proof}

It will be useful to have also the following a priori estimate concerning~\eqref{e:system-SDE-aux}, obtained as a result of the following condition:
\begin{enumerate}
\item[$(w2)$] There exists $M_{w} >0$ such that for every $x \in \R^{d}$, every $\lambda \in [0, 1]$, and every $\mu \in \cP_{1} (\R^{d})$
\begin{displaymath}
\| w_{\mu} (x, \lambda) \| \leq M_{w} \big( 1 + \| x \| + m_{1} (\mu) \big).
\end{displaymath}
\end{enumerate}

\begin{proposition} 
\label{p:a-priori-estimate-SDE}
Let $T > 0$,  let $B \colon \Omega \to C([0,T],\R^d)$ be a $d$-dimensional standard Brownian motion defined on a filtered probability space $\bigl(\Omega, \sF, (\sF_t)_{t\in[0,T]},\prob\bigr)$, $\widehat{X}_0\colon\Omega\to\R^d$ an $\sF_0$-measurable random variable belonging to $L^p(\Omega,\sF,\prob)$ with $p \geq 2$, and $\widehat{\Lambda}_0\colon\Omega\to [0,1]$ an $\sF_0$-measurable random variable. Under the assumption $(w2)$, any strong solution $Y = (X, \Lambda)$ to system~\eqref{e:system-SDE-aux} such that the curve $t \mapsto m_1(\rho_{t})$ is bounded over $[0, T]$ satisfies the following \emph{a priori} estimate
\begin{equation*}
    \E\biggl[\sup_{t\in[0,T]}\norm{Y_t}_{\R^d\times[0,1]}^p\biggr] \le C \biggl(1 + \E\bigl( \norm{X_0}^p\bigr) \biggr),
\end{equation*}
for a constant $C = C( \rho, T)>0$.
\end{proposition}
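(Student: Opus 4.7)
The plan is to reduce the bound on $\|Y_t\|_{\R^d\times[0,1]}$ to a bound on $\|X_t\|$, since assumption $(T2)$ already guarantees $\Lambda_t\in[0,1]$ (so its contribution to $\|Y_t\|^p$ is a harmless constant). Denoting $M_\rho := \sup_{t\in[0,T]} m_1(\rho_t)$, which is finite by hypothesis, assumption $(w2)$ immediately gives the pointwise control
\begin{equation*}
\|w_{\rho_s}(X_s,\Lambda_s)\|^p \le C_p\bigl(1+\|X_s\|^p+M_\rho^p\bigr),
\end{equation*}
so the mean-field coupling is effectively decoupled and the argument reduces to a standard moment estimate for an SDE with sublinear growth.

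Starting from the integral form in \eqref{e:integral-equation} and applying the elementary inequality $(a+b+c)^p\le 3^{p-1}(a^p+b^p+c^p)$, I would bound the drift contribution through Jensen/Hölder,
\begin{equation*}
\Bigl\|\int_0^t\nu\,w_{\rho_s}(X_s,\Lambda_s)\,\de s\Bigr\|^p \le \nu^p t^{p-1}\int_0^t \|w_{\rho_s}(X_s,\Lambda_s)\|^p\,\de s,
\end{equation*}
and the stochastic contribution through the Burkholder--Davis--Gundy inequality followed by Jensen on the time integral,
\begin{equation*}
\E\Bigl[\sup_{r\le t}\Bigl\|\int_0^r \varsigma\,\|w_{\rho_s}(X_s,\Lambda_s)\|\,\de B_s\Bigr\|^p\Bigr] \le C_{p,d}\varsigma^p\, t^{p/2-1}\int_0^t \E\bigl[\|w_{\rho_s}(X_s,\Lambda_s)\|^p\bigr]\,\de s.
\end{equation*}
Substituting the growth bound from $(w2)$ and collecting terms, I obtain an inequality of the form
\begin{equation*}
\phi(t) \le C_1\bigl(1+\E[\|\widehat{X}_0\|^p]\bigr) + C_2\int_0^t \phi(s)\,\de s, \qquad \phi(t):=\E\Bigl[\sup_{r\le t}\|X_r\|^p\Bigr],
\end{equation*}
with constants $C_1,C_2$ depending on $T$, $p$, $d$, $\nu$, $\varsigma$, $M_w$ and $M_\rho$. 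Grönwall's lemma then yields $\phi(T)\le C(1+\E[\|\widehat{X}_0\|^p])$, and adding the trivial bound $|\Lambda_t|\le 1$ gives the claimed estimate.

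The only technical subtlety, which I would mention but not belabor, is that $\phi$ is not a priori known to be finite, so the Grönwall step should be preceded by a localization argument: introduce the stopping times $\tau_n:=\inf\{t\in[0,T]:\|X_t\|\ge n\}\wedge T$, run the above estimate on $[0,t\wedge\tau_n]$ (where BDG applies unconditionally), derive the bound uniformly in $n$, and pass to the limit $n\to\infty$ by monotone convergence. This is routine, and the genuine content of the proposition lies in the fact that the hypothesis of boundedness of $t\mapsto m_1(\rho_t)$ together with $(w2)$ is exactly what is needed to close the Grönwall loop without invoking the Lipschitz structure $(w1)$ or a self-consistency argument on the law.
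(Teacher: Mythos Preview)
Your proposal is correct and follows essentially the same line as the paper: both observe that $(w2)$ together with the boundedness of $t\mapsto m_1(\rho_t)$ yields a uniform-in-time sublinear growth bound $\|w_{\rho_t}(x,\lambda)\|\le C_\rho(1+\|x\|)$, after which the estimate reduces to the standard $L^p$ moment bound for SDEs with sublinear coefficients (the paper delegates this step to \cite[Proposition~4.5]{BM}, whereas you spell out the BDG--Gr\"onwall--localization argument explicitly). One cosmetic remark: you invoke $(T2)$ to justify $\Lambda_t\in[0,1]$, but the proposition assumes only $(w2)$; the constraint $\Lambda_t\in[0,1]$ is already built into the solution space $C([0,T],\R^d\times[0,1])$ in which $Y$ is required to live.
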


\begin{proof} Condition $(w2)$ and the boundedness of the map $t \mapsto m_1(\rho_t)$ over $[0,T]$ imply the following (uniform in time) sublinearity estimate: for all $(x,\lambda) \in \R^d \times [0,1]$ and $t \in [0,T]$,
\begin{equation*}
    \| w_{\rho_t} (x, \lambda) \| \leq M_{w} \big( 1 + \| x \| + m_{1} (\rho_t) \big) \le C_{\rho}(1 + \norm{x}) ,
\end{equation*}
where $C_\rho>0$ is a suitable constant depending on $M_w$ and on the curve $\rho$. This allows us to argue as in \cite[Proposition~4.5]{BM} (see also Proposition 3.14 in the same work).
\end{proof}

\section{Well-posedness of a Consensus-Based Optimization model}
\label{s:well-posedness}
In this section we extend Proposition~\ref{p:existence-truncated} in order to account for a particular locally Lipschitz velocity~$w$. More precisely, we investigate the well-posedness of the system 
\begin{equation}
\label{e:system-SDE}
    \begin{dcases}
        \de X_t = \nu\, \big( -X_{t} + \Lambda_{t} f(\rho_{t}) + (1 - \Lambda_{t}) e(\rho_{t} ) \big) \,\de t + \varsigma \norm{ -X_{t} + \Lambda_{t} f(\rho_{t}) + (1 - \Lambda_{t}) e(\rho_{t}) }\,\de B_t\,, \\
        \de \Lambda_t = \cT_{\Sigma_t}(X_t,\Lambda_t)\,\de t,\\
         \Sigma_t = \Law(X_t,\Lambda_t), \\
         (X_{0}, \Lambda_{0}) = (\widehat{X}_{0}, \widehat{\Lambda}_{0}),\\
         \rho_t = (\pi_x)_\sharp \Sigma_t = \Law(X_t)\,.
    \end{dcases}
\end{equation}
where $\nu,\varsigma$ are non-negative parameters,  $f \colon \cP_{1} (\R^{d}) \to \R^{d}$ is  suitable given map, and
\begin{align}
    e(\mu) & \coloneqq  \int_{\R^d} x\,\de \mu,\quad \text{ for all } \mu \in \cP_1(\R^d).\label{e:mean-operator}
\end{align}
To shorten notation, we set
\begin{displaymath}
  v_\mu(x,\lambda)  \coloneqq -x + \lambda f(\mu) + (1-\lambda)e(\mu) \quad \text{ for all } \mu \in \cP_1(\R^d), \, x \in \R^{d}, \, \lambda \in [0, 1].
\end{displaymath}
The structure of the SDE system~\eqref{e:system-SDE}--\eqref{e:mean-operator} resembles the one typically considered in CBO models (cf.~\cite{Carrilloetal, FKR}), that we will consider in more detail in Section~\ref{s:application}. 

We make the following assumptions on $f$:
\begin{enumerate}
    \item[$(f1)$] For every $R > 0$, there exists $L_{f,R} > 0$ such that, for all $\mu_1,\mu_2 \in \cP_1(\R^d)$ with
    \begin{equation*}
        m_{1} (\mu_{1}) , \, m_{1} (\mu_{2}) \le R, 
    \end{equation*}
    the following estimate holds:
    \begin{equation}
        \norm{f(\mu_1) - f(\mu_2)} \le L_{f,R} W_1(\mu_1,\mu_2).
    \end{equation}
    \item[$(f2)$]
    There exists a constant $M_{f} \ge 0$ such that, for all $\mu \in \cP_1(\R^d)$, 
    \begin{equation*}
        \norm{f(\mu)} \le M_{f} \big(1 + m_{1} (\mu) \big).
    \end{equation*} 
   \end{enumerate}
As a consequence of $(f1)$--$(f2)$, the velocity field $v$ satisfies the following properties.
\begin{proposition} 
\label{p:some-estimates}
The following inequalities hold:
\begin{enumerate}
\item[$(v1)$] For all $\mu \in \cP_1(\R^d)$ and $(x_1,\lambda), (x_2,\lambda) \in \R^d\times[0,1]$,
\begin{equation*}
    \norm{v_\mu(x_1,\lambda) - v_\mu(x_2,\lambda)} = \norm{x_1 - x_2};
\end{equation*}
\item[$(v2)$] for all $\mu \in \cP_1(\R^d)$ and $(x,\lambda_1), (x,\lambda_2) \in \R^d\times[0,1]$,
\begin{equation*}
    \norm{v_\mu(x,\lambda_1) - v_\mu(x,\lambda_2)} \le (\norm{f(\mu)} + \norm{e(\mu)})\abs{\lambda_1 - \lambda_2};
\end{equation*}
\item[$(v3)$] for all $\mu_1, \mu_2 \in \cP_1(\R^d)$ and $(x,\lambda) \in \R^d\times[0,1]$,
\begin{equation*}
    \norm{v_{\mu_1}(x,\lambda) - v_{\mu_2}(x,\lambda)} \le \norm{f(\mu_1) - f(\mu_2)} + \norm{e(\mu_1) - e(\mu_2)}.
\end{equation*}
\end{enumerate}

Moreover, $v_{\mu}$ satisfies condition~$(w2)$. 
\end{proposition}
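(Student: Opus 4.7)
The plan is to verify all four claims by direct computation from the definition of $v_\mu(x,\lambda) = -x + \lambda f(\mu) + (1-\lambda)e(\mu)$, exploiting its affine structure in each of its three arguments $(x,\lambda,\mu)$ separately. No properties of $f$ are needed for $(v1)$--$(v3)$: only the defining formula and basic norm inequalities. The assumption $(f2)$ and the trivial bound $\|e(\mu)\|\le m_1(\mu)$ enter only to deduce $(w2)$.

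For $(v1)$, I would observe that when $\lambda$ and $\mu$ are held fixed, the $\lambda f(\mu)$ and $(1-\lambda)e(\mu)$ contributions cancel in the difference $v_\mu(x_1,\lambda)-v_\mu(x_2,\lambda)$, leaving exactly $-(x_1-x_2)$. For $(v2)$, with $x$ and $\mu$ fixed, the difference $v_\mu(x,\lambda_1)-v_\mu(x,\lambda_2)$ collapses to $(\lambda_1-\lambda_2)\bigl(f(\mu)-e(\mu)\bigr)$, and the stated bound follows by the triangle inequality on $f(\mu)-e(\mu)$. For $(v3)$, with $x$ and $\lambda$ fixed, the difference is $\lambda\bigl(f(\mu_1)-f(\mu_2)\bigr)+(1-\lambda)\bigl(e(\mu_1)-e(\mu_2)\bigr)$, and since $\lambda,(1-\lambda)\in[0,1]$ the triangle inequality gives the claim.

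For the final statement, that $v_\mu$ satisfies $(w2)$, I would bound each term in $\|v_\mu(x,\lambda)\|\le \|x\|+\lambda\|f(\mu)\|+(1-\lambda)\|e(\mu)\|$. By Jensen's inequality, $\|e(\mu)\|\le\int_{\R^d}\|y\|\,\di\mu(y)=m_1(\mu)$, and by $(f2)$, $\|f(\mu)\|\le M_f\bigl(1+m_1(\mu)\bigr)$. Combining and using $\lambda,(1-\lambda)\in[0,1]$, one obtains a bound of the form $\|v_\mu(x,\lambda)\|\le M_w\bigl(1+\|x\|+m_1(\mu)\bigr)$ with $M_w=\max\{1,M_f\}$ (or any convenient larger constant).

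There is no real obstacle here: everything is a one-line computation using linearity of $v_\mu$ in its arguments plus the triangle inequality. The only assumption genuinely used is $(f2)$, and only for the $(w2)$ part; assumption $(f1)$ is not needed for this proposition and will be relevant only when combining $(v3)$ with Lipschitz estimates on $f$ later. I would write the proof as four short paragraphs, one per item, with no additional machinery.
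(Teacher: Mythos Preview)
Your proposal is correct and matches the paper's approach exactly: the paper also dismisses $(v1)$--$(v3)$ as direct computations and proves $(w2)$ via $\|v_\mu(x,\lambda)\|\le \|x\|+\|f(\mu)\|+\|e(\mu)\|$ together with $(f2)$ and $\|e(\mu)\|\le m_1(\mu)$, arriving at the constant $M_f+1$ (your sharper $\max\{1,M_f\}$ is fine too).
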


\begin{proof} Inequalities ($v1$), ($v2$), and $(v3)$ readily follow form a direct computation. As for condition $(w2)$, we notice that by $(f2)$ and by definition of~$e(\mu)$ in~\eqref{e:mean-operator} we have that
\begin{align*}
\| v_{\mu} (x, \lambda) \| \leq \| x\| + \| f(\mu)\| + \| e(\mu)\| \leq \| x \| + M_{f} \big( 1 + m_{1} (\mu) \big) + m_{1} (\mu) \leq (M_{f} + 1) \big( 1 + \| x \| + m_{1} (\mu) \big) \,,
\end{align*}
which concludes the proof.
\end{proof}

\begin{remark}
\label{r:e-lip}
We observe that $e$ is Lipschitz continuous with respect to the $1$-Wasserstein distance. Indeed, for every $\mu_{1}, \mu_{2} \in \cP_{1} (\R^{d})$ and every transport plan $\gamma \in \cP_{1}(\R^{d} \times \R^{d})$ between $\mu_{1}$ and $\mu_{2}$ we have that
\begin{equation*}
    \norm{e(\mu_1) - e(\mu_2)} = \bigg \| \int_{\R^{d}} x \, \de \mu_{1} (x) - \int_{\R^{d}} y \de \mu_{2} (y) \bigg\| = \bigg\| \int_{\R^{d} \times \R^{d}} (x - y) \, \de \gamma(x, y)\bigg\|  \leq \int_{\R^{d} \times \R^{d}} \norm{x - y}\, \de \gamma (x, y)\,.
\end{equation*}
Hence, we infer the desired Lipschitz continuity of~$e$.
\end{remark}

We now state the main result of this section, extending Proposition~\ref{p:existence-truncated}.

\begin{theorem}
\label{t:well-posedness}
Given $p \geq 2$, $T > 0$, let $B \colon \Omega \to C([0,T],\R^d)$ be a $d$-dimensional standard Brownian motion defined on a filtered probability space $\bigl(\Omega, \sF, (\sF_t)_{t\in[0,T]},\prob\bigr)$, $\widehat{X}_0\colon\Omega\to\R^d$ an $\sF_0$-measurable random variable belonging to $L^p(\Omega,\sF,\prob)$, and $\widehat{\Lambda}_0\colon\Omega\to [0,1]$ an $\sF_0$-measurable random variable. Assume that $(f1)$--$(f2)$ and $(T1)$--$(T2)$ hold. Then, there exists an $(\sF_t)_t$-adapted continuous stochastic process
$Y = (X,\Lambda) \colon \Omega \to C\bigl([0,T],\R^d\times [0,1]\bigr)$ which satisfies the equation
\begin{equation*}
\begin{dcases}
    X_t =  X_0 + \int_0^t \nu\, v_{\rho_s}(X_s,\Lambda_s)\,\de s + \int_0^t\varsigma \norm{v_{\rho_s}(X_s,\Lambda_s)}\,\de B_s\,, \\
    \Lambda_t = \Lambda_0 +\int_0^t \cT_{\Sigma_s}(X_s,\Lambda_s)\,\de s,
\end{dcases}
\end{equation*}
with $\Sigma_t = \Law(Y_t)$ (and $\rho_t = (\pi_x)_\sharp \Sigma_t = \Law(X_t)$), $\prob$-almost surely and for all $t \in [0,T]$. The solution satisfies the estimate
\begin{align}
\label{e:moment-estimate}
   \sup_{t \in [0, T]}  \E\big( \norm{Y_t}_{\R^d\times[0,1]}^2\big)  & \le C \biggl(1 + \E\bigl( \norm{X_0}^2 \bigr) \biggr)\,,\\
   \label{e:momento4}
      \E\biggl[\sup_{t \in [0, T]} \norm{Y_t}_{\R^d\times[0,1]}^p\biggr] & \le C_{\rho, T}  \biggl(1 + \E\bigl( \norm{X_0}^p \bigr) \biggr)
\end{align}
for some positive constants~$C = C(T)$ only depending on~$T$ and $C_{\rho, T}$ depending on~$\rho$ and~$T$. Moreover, the map $t \mapsto \Sigma_t$ belongs to $C([0,T],(\cP_p(\R^d \times [0,1]),W_p))$ and  the solution is pathwise unique in the class of (strong) solutions such that the curve $t \mapsto m_{1} (\rho_{t})$ is bounded over $[0,T]$.
\end{theorem}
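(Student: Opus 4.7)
\emph{Plan of proof.} The strategy is to reduce matters to Proposition~\ref{p:existence-truncated} by a truncation argument, and then to use a self-consistent second-moment bound to remove the truncation. The field $v_\mu$ defined by~\eqref{e:system-SDE} fails to be globally Lipschitz in $\mu$ only because $f$ is, by $(f1)$, merely locally Lipschitz with a constant depending on~$R\ge m_1(\mu_i)$; aside from this, Proposition~\ref{p:some-estimates} shows that $v_\mu$ is $1$-Lipschitz in $x$, Lipschitz in $\lambda$ with constant $\|f(\mu)\|+\|e(\mu)\|$, and satisfies the sublinear bound $(w2)$. For every $R>0$ I would therefore introduce a modified velocity $w^R_\mu(x,\lambda)$ that coincides with $v_\mu(x,\lambda)$ whenever $m_1(\mu)\le R$ and satisfies $(w1)$ globally (e.g.\ by composing the argument of $f$ with a $W_1$-Lipschitz projection onto $\{m_1\le R\}$, or multiplying $f(\mu)$ by a smooth cut-off of $m_1(\mu)/R$). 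Since $m_1(\mu)$ is itself $1$-Lipschitz in $W_1$, $w^R$ satisfies $(w1)$ with a constant depending on~$R$, and Proposition~\ref{p:existence-truncated} yields an adapted continuous solution $Y^R=(X^R,\Lambda^R)$ with law $\Sigma^R_t$ for each~$R$.

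The heart of the argument is the uniform-in-$R$ bound \eqref{e:moment-estimate}. I would apply It\^o's formula to $\|X^R_t\|^2$ and take expectations, killing the martingale part. Using the explicit form of $v$, one has $X^R_t\cdot v_{\rho^R_t}(X^R_t,\Lambda^R_t) = -\|X^R_t\|^2 + X^R_t\cdot\big(\Lambda^R_t f(\rho^R_t)+(1-\Lambda^R_t)e(\rho^R_t)\big)$, while $(f2)$ and $\|e(\mu)\|\le m_1(\mu)$ give $\|w^R_{\rho^R_t}\|\le \|X^R_t\|+C(1+m_1(\rho^R_t))$. Combining with the elementary inequality $m_1(\rho^R_t)\le\E(\|X^R_t\|^2)^{1/2}$, both the drift term and the quadratic variation term $d\varsigma^2\E(\|w^R_{\rho^R_t}\|^2)$ are controlled by $C(1+\E(\|X^R_t\|^2))$ with $C$ depending only on $M_f,\nu,\varsigma,d$. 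A Gr\"onwall argument closes the estimate and yields \eqref{e:moment-estimate} with a constant $C(T)$ independent of~$R$. Choosing $R$ larger than $\sqrt{(1+\E\|X_0\|^2)e^{CT}}$ deactivates the truncation on $[0,T]$, so $Y^R$ is in fact a solution of \eqref{e:system-SDE}. With $t\mapsto m_1(\rho_t)$ now bounded on $[0,T]$, Proposition~\ref{p:a-priori-estimate-SDE} (applicable thanks to condition $(w2)$ verified in Proposition~\ref{p:some-estimates}) delivers \eqref{e:momento4}.

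For the pathwise uniqueness, assume two solutions $Y^1,Y^2$ with $m_1(\rho^i_t)$ uniformly bounded by some $R$ on $[0,T]$. Apply It\^o to $\|X^1_t-X^2_t\|^2+|\Lambda^1_t-\Lambda^2_t|^2$ and estimate the right-hand side using $(v1)$--$(v3)$, the locally Lipschitz condition $(f1)$ at level~$R$, the global $W_1$-Lipschitzness of $e$ recorded in Remark~\ref{r:e-lip}, and $(T1)$, controlling $W_1(\Sigma^1_t,\Sigma^2_t)$ by $\E(\|X^1_t-X^2_t\|+|\Lambda^1_t-\Lambda^2_t|)$. This produces an integral inequality to which Gr\"onwall's lemma applies, forcing $Y^1=Y^2$. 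The continuity $t\mapsto\Sigma_t\in C([0,T],(\cP_p,W_p))$ is then inherited from the pathwise continuity of $Y_t$ together with the uniform $L^p$-bound \eqref{e:momento4}, which provides the uniform integrability needed to turn convergence in probability into $W_p$-convergence.

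The main technical point is the self-referential character of the second-moment bound: since $m_1(\rho_t)$ entering $v_{\rho_t}$ is the first moment of the unknown law, Proposition~\ref{p:a-priori-estimate-SDE} cannot be invoked directly (it presupposes boundedness of $m_1(\rho_t)$). One must instead close the Gr\"onwall loop by hand, which is precisely made possible by the sublinear growth in $m_1(\mu)$ imposed in $(f2)$ and by the quadratic absorption coming from the $-x$ term in $v_\mu$.
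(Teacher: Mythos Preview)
Your proposal is correct and follows essentially the same line as the paper: truncate the velocity field by cutting off against $m_1(\mu)$ (the paper uses precisely your second option, multiplying $f(\mu)$ and $e(\mu)$ by a smooth cut-off $\varphi_R(\mu)=\eta_R(m_1(\mu))$), apply Proposition~\ref{p:existence-truncated} to the truncated system, derive a uniform-in-$R$ second-moment bound via It\^o on $\|X^R_t\|^2$ and Gr\"onwall, and then choose $R$ large enough to deactivate the truncation on $[0,T]$. The only substantive difference is in the uniqueness step: you propose a direct Gr\"onwall comparison of two solutions using $(v1)$--$(v3)$, $(f1)$, and $(T1)$, whereas the paper argues more economically that any two solutions with bounded $m_1(\rho^i_t)$ are already solutions of the \emph{same} $R$-truncated problem for $R$ large, and then invokes the pathwise uniqueness already supplied by Proposition~\ref{p:existence-truncated}. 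Both arguments are valid; the paper's route avoids repeating the Gr\"onwall estimate.
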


In order to prove Theorem~\ref{t:well-posedness}, we proceed with a truncation argument. 
For this purpose, we introduce some further notation. For any $R > 0$ we introduce a \emph{cut-off} function $\eta_R \in C^{\infty}_c(\R)$ such that $0 \le \eta_R \le 1$, $| \eta'_{R}| \leq 2$, and
\begin{equation*}
\eta_R(z) =
\begin{dcases}
    1 &\text{if } z \le R, \\
    0 &\text{if } z \ge R + 1\,.
\end{dcases}
\end{equation*}
An explicit example is given by the function
\begin{equation*}
    \eta_R(z) = \frac{h(R+1 - z)}{h(R+1 - z) + h(z - R)}
\end{equation*}
where $h(t) = e^{-1/t}\ind_{(0,+\infty)}(t)$. By means of this cut-off function we construct $\varphi_R \colon \cP_1 (\R^d) \to \R$ as
\begin{equation*}
    \varphi_R(\mu) \coloneqq \eta_R \big( m_{1} (\mu) \big),\quad \text{for all } \mu \in \cP_1(\R^d).
\end{equation*}
We notice that $\varphi_{R}$ satisfies $0 \le \varphi_R \le 1$ and for every $\mu \in \cP_{1} (\R^{d})$
\begin{equation*}
\varphi_R(\mu) =
\begin{dcases}
    1 &\text{if }  m_{1} (\mu) \le R, \\
    0 &\text{if } \ m_{1} (\mu) \ge R + 1.
\end{dcases}
\end{equation*}

\begin{proposition} 
\label{p:lip-phi}
The function $\varphi_R$ is Lipschitz continuous on $(\cP_1(\R^d),W_1)$. More precisely, for any $R > 0$, we have that
\begin{equation*}
    \abs{\varphi_R(\mu_1) - \varphi_R(\mu_2)} \le 2 W_1(\mu_1,\mu_2)
\end{equation*}
for all $\mu_1,\mu_2 \in \cP_1(\R^d)$.
\end{proposition}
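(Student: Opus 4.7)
The plan is to exploit the composition structure $\varphi_R = \eta_R \circ m_1$ and show that each factor is Lipschitz, with constants $2$ and $1$ respectively. The overall Lipschitz constant $2$ will then follow immediately.

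First I would show that the first-moment functional $m_1 \colon (\cP_1(\R^d), W_1) \to \R$ is $1$-Lipschitz. Fixing $\mu_1, \mu_2 \in \cP_1(\R^d)$ and an arbitrary transport plan $\gamma \in \cP(\R^d \times \R^d)$ with marginals $\mu_1$ and $\mu_2$, I would write
\begin{equation*}
| m_1(\mu_1) - m_1(\mu_2) | = \left| \int_{\R^d \times \R^d} (\|x\| - \|y\|) \, \de \gamma(x,y) \right| \le \int_{\R^d \times \R^d} \|x - y\| \, \de \gamma(x,y),
\end{equation*}
using the reverse triangle inequality $|\|x\| - \|y\|| \le \|x - y\|$. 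Taking the infimum over admissible plans $\gamma$ yields $|m_1(\mu_1) - m_1(\mu_2)| \le W_1(\mu_1, \mu_2)$. This step is essentially the same computation as the one already carried out for the mean operator $e$ in Remark~\ref{r:e-lip}.

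Next I would use the bound $|\eta_R'| \le 2$ assumed in the definition of the cut-off. By the mean value theorem applied to the real-valued function $\eta_R$, for any $s, t \in \R$ one has $|\eta_R(s) - \eta_R(t)| \le 2 |s - t|$, so $\eta_R$ is $2$-Lipschitz on $\R$.

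Combining these two observations with the definition $\varphi_R(\mu) = \eta_R(m_1(\mu))$ gives
\begin{equation*}
| \varphi_R(\mu_1) - \varphi_R(\mu_2) | = | \eta_R(m_1(\mu_1)) - \eta_R(m_1(\mu_2)) | \le 2 \, | m_1(\mu_1) - m_1(\mu_2) | \le 2 \, W_1(\mu_1, \mu_2),
\end{equation*}
which is the claimed estimate. There is no substantive obstacle here: the argument is a routine composition of Lipschitz maps. The only point that requires a modicum of care is the first step, where one must pass from the difference of integrals against different measures to an integral against a coupling in order to invoke the definition of $W_1$; but this is exactly the standard duality/transport trick already used in the paper.
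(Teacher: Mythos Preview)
Your proof is correct and follows essentially the same approach as the paper: both argue that $\varphi_R = \eta_R \circ m_1$ is a composition of a $2$-Lipschitz function with a $1$-Lipschitz one. The only difference is that the paper cites \cite[Proposition~7.29]{Villani} for the $1$-Lipschitz property of $m_1$, whereas you supply the short direct argument via couplings and the reverse triangle inequality.
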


\begin{proof} 
The function $\eta_R$ is $2$-Lipschitz since $|\eta'_{R}|\leq 2$. Moreover, the map $ \mu \mapsto  m_{1} (\mu)$ is $1$-Lipschitz on $\cP_1(\R^d)$ with respect to the Wasserstein distance $W_1$ thanks to \cite[Proposition 7.29]{Villani}. Hence, the claim readily follows by composition of Lipschitz functions.
\end{proof}

For all $R > 0$ we introduce the truncated field
\begin{equation*}
    \tv^R_\mu(x,\lambda) = -x + \lambda\varphi_R(\mu)f(\mu) + (1-\lambda)\varphi_R(\mu)e(\mu), \quad \text{ for all } \mu \in \cP_1(\R^d), (x,\lambda) \in \R^d \times [0,1].
\end{equation*}

\begin{proposition} 
\label{p-vR-lip}
Assume $(f1)$--$(f2)$. Then, for every $R > 0$ there exists a constant $L_R \ge 0$ such that
\begin{equation*}
    \norm{\tv^R_{\mu_1}(x_1,\lambda_1) - \tv^R_{\mu_2}\bigl(x_2,\lambda_2)} \le L_R(\norm{x_1 - x_2} + \abs{\lambda_1 - \lambda_2} + W_1(\mu_1,\mu_2)\bigr)
\end{equation*}
for all $\mu_1,\mu_2 \in \cP_1(\R^d)$ and $(x_1,\lambda_1), (x_2,\lambda_2) \in \R^d \times [0,1]$.
\end{proposition}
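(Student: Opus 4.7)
The plan is to exploit the cut-off $\varphi_R$ in order to turn the merely locally Lipschitz map $f$ into a globally bounded, globally Lipschitz quantity, and then to combine this with the elementary dependence of $\tv^R_\mu$ on $x$ and $\lambda$. Concretely, I would decompose the difference $\tv^R_{\mu_1}(x_1,\lambda_1) - \tv^R_{\mu_2}(x_2,\lambda_2)$ by telescoping through the three variables $(x,\lambda,\mu)$ and estimate each of the three resulting pieces separately.

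The $x$- and $\lambda$-increments are essentially routine. The $x$-difference at fixed $(\mu,\lambda)$ is $-(x_1-x_2)$, exactly as in $(v1)$, contributing a constant of $1$. The $\lambda$-difference at fixed $(\mu_2,x_1)$ equals $(\lambda_1 - \lambda_2)\varphi_R(\mu_2)\bigl(f(\mu_2) - e(\mu_2)\bigr)$; whenever $\varphi_R(\mu_2) \neq 0$, the definition of $\varphi_R$ forces $m_1(\mu_2) \le R+1$, so $(f2)$ together with the trivial bound $\|e(\mu_2)\| \le m_1(\mu_2)$ yields
\[
\varphi_R(\mu_2)\bigl(\|f(\mu_2)\| + \|e(\mu_2)\|\bigr) \le M_f(R+2) + (R+1),
\]
producing a Lipschitz contribution of order $|\lambda_1 - \lambda_2|$.

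The core of the argument is the $\mu$-increment, for which I would prove that both maps $\mu \mapsto \varphi_R(\mu) f(\mu)$ and $\mu \mapsto \varphi_R(\mu) e(\mu)$ are globally Lipschitz on $(\cP_1(\R^d), W_1)$. To this end I split into cases depending on the position of $m_1(\mu_i)$ relative to $R+1$. If both $m_1(\mu_i) \ge R+1$, then the two products vanish. If both $m_1(\mu_i) \le R+1$, I use the product decomposition
\[
\varphi_R(\mu_1)f(\mu_1) - \varphi_R(\mu_2)f(\mu_2) = \varphi_R(\mu_1)\bigl[f(\mu_1) - f(\mu_2)\bigr] + \bigl[\varphi_R(\mu_1) - \varphi_R(\mu_2)\bigr]f(\mu_2),
\]
estimating the first summand via $(f1)$ with Lipschitz constant $L_{f,R+1}$ and the second via Proposition~\ref{p:lip-phi} combined with the uniform bound $\|f(\mu_2)\| \le M_f(R+2)$. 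In the mixed case, say $m_1(\mu_1) \le R+1 < m_1(\mu_2)$, we have $\varphi_R(\mu_2) = 0$, so
\[
\varphi_R(\mu_1)f(\mu_1) - \varphi_R(\mu_2)f(\mu_2) = \bigl[\varphi_R(\mu_1) - \varphi_R(\mu_2)\bigr]f(\mu_1),
\]
which is controlled by $2M_f(R+2)\,W_1(\mu_1,\mu_2)$ since now $m_1(\mu_1) \le R+1$. The map $\mu \mapsto \varphi_R(\mu) e(\mu)$ is handled identically, with Remark~\ref{r:e-lip} in place of $(f1)$ and the inequality $\|e(\mu)\| \le m_1(\mu)$ in place of $(f2)$.

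The main obstacle is notational rather than conceptual: one has to choose, in each case, the ``right'' decomposition of the product $\varphi_R(\mu) f(\mu)$, so that the potentially unbounded factor $f(\mu_i)$ is always paired with a quantity controllable in terms of $R$ alone. Once the case analysis is set up, summing the three telescoping contributions yields the claim with $L_R$ depending only on $R$, $L_{f,R+1}$, and $M_f$.
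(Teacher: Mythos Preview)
Your proposal is correct and follows essentially the same approach as the paper: both arguments reduce to showing global Lipschitz continuity of the truncated products $\varphi_R(\mu)f(\mu)$ and $\varphi_R(\mu)e(\mu)$ via the same three-case analysis on the position of $m_1(\mu_i)$ relative to $R+1$, invoking $(f1)$, $(f2)$, Proposition~\ref{p:lip-phi}, and Remark~\ref{r:e-lip} at the same points. The only cosmetic difference is that the paper telescopes the $(\lambda,\mu)$-dependence jointly (treating $\lambda\varphi_R(\mu)f(\mu)$ as a single map of two variables), whereas you separate the $\lambda$-increment first and then handle the $\mu$-increment; both orderings yield the same constant $L_R$ depending on $R$, $L_{f,R+1}$, and $M_f$.
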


\begin{proof} 
It is enough to show the Lipschitz continuity of $(\lambda, \mu) \mapsto \lambda \varphi_{R} (\mu) f(\mu)$ and of $(\lambda, \mu) \mapsto ( 1 - \lambda) \varphi_{R} (\mu) e(\mu)$  in $[0, 1] \times \cP_{1} (\R^{d})$. We focus on the first map. For $\mu_{1}, \mu_{2} \in \cP_{1} (\R^{d})$ and $\lambda_{1}, \lambda_{2} \in [0, 1]$, we distinguish three cases. If $m_{1} (\mu_{i}) \leq R+1$ for $i=1, 2$, we estimate
\begin{align}
\label{e:triangular-ineq}
 \| \lambda_{1}  \varphi_{R} (\mu_{1}) f(\mu_{1}) & - \lambda_{2} \varphi_{R} (\mu_{2}) f(\mu_{2})\| \leq   | \lambda_{1} | \, | \varphi_{R}(\mu_{1}) - \varphi_{R} (\mu_{2}) | \, \| f(\mu_{1})\| 
 \\
 &
 + |\lambda_{1} - \lambda_{2}| \, |  \varphi_{R} (\mu_{2}) | \, \| f (\mu_{1})\| + |\lambda_{2}| \, |  \varphi_{R} (\mu_{2}) | \, \| f (\mu_{1}) - f(\mu_{2}) \|\,. \nonumber
\end{align}
In view of $(f1)$--$(f2)$ and of Proposition~\ref{p:lip-phi}, we get that
\begin{align*}
\| \lambda_{1} & \varphi_{R} (\mu_{1}) f(\mu_{1}) - \lambda_{2} \varphi_{R} (\mu_{2}) f(\mu_{2})\| 
\\
&
\leq 2M_{f} ( 2 + R)  W_{1} (\mu_{1}, \mu_{2}) + M_{f} ( 2 + R) | \lambda_{1} - \lambda_{2}| + L_{f, R+1} W_{1} (\mu_{1}, \mu_{2}) \,.
\end{align*}

In the second case, we assume that $m_{1} (\mu_{2}) >R+1$ and $m_{1} (\mu_{1}) \leq R+1$. Then, starting from~\eqref{e:triangular-ineq} we notice that $\varphi_{R} (\mu_{2}) = 0$, so that
\begin{align*}
\| \lambda_{1} & \varphi_{R} (\mu_{1}) f(\mu_{1}) - \lambda_{2} \varphi_{R} (\mu_{2}) f(\mu_{2})\| \leq |\lambda_{1}| \, | \varphi_{R}(\mu_{1}) - \varphi_{R} (\mu_{2}) | \, \| f(\mu_{1})\|
\leq 2 M_{f} ( 2 + R) W_{1} (\mu_{1}, \mu_{2})\,.
\end{align*}
In the third scenario, we have $m_{1}(\mu_{i}) >R+1$ for $i=1, 2$. Then, $\varphi_{R} (\mu_{i}) = 0$ for $i=1, 2$ and the inequality is trivial.

The map  $(\lambda, \mu) \mapsto ( 1 - \lambda) \varphi_{R} (\mu) e(\mu)$  in $[0, 1] \times \cP_{1} (\R^{d})$ can be treated in the very same way, relying on Remark~\ref{r:e-lip}. This concludes the proof of the proposition.
\end{proof}
\begin{remark}\label{r:RemarkOnSpaceMarginals}
For every $\Psi_1,\Psi_2 \in \cP_1(\R^d\times[0,1])$ we have
\begin{equation*}
    W_1(\mu_1,\mu_2) \le W_1(\Psi_1,\Psi_2) 
\end{equation*}
where $\mu_i = (\pi_x)_\sharp \Psi_i$.
Indeed, if $\gamma \in \Gamma(\Psi_1,\Psi_2)$ is a transport plan between $\Psi_1$ and $\Psi_2$\,, then $\tilde{\gamma} \coloneqq (\pi_x, \pi_x)_\sharp \gamma \in \Gamma(\mu_1,\mu_2)$, whence
\begin{equation*}
\begin{split}
    W_1 (\mu_1,\mu_2) &\le \int_{(\R^d)^2}\norm{x_1-x_2}\,\de\tilde{\gamma}(x_1,x_2) = \int_{(\R^d\times[0,1])^2}\norm{x_1-x_2} \,\de\gamma(x_1,\lambda_1,x_2,\lambda_2) \\
    &\le \int_{(\R^d\times[0,1])^2}(\norm{x_1-x_2} + \abs{\lambda_1 -\lambda_2})\,\de\gamma(x_1,\lambda_1,x_2,\lambda_2),
\end{split}
\end{equation*}
and the estimate follows from the arbitrariness of $\gamma \in \Gamma(\Psi_1,\Psi_2)$.
\end{remark}

For every $R>0$ we consider the \emph{truncated system}
\begin{equation}
\label{e:truncated-system}
    \begin{dcases}
        \de X^R_t = \nu\,\tv^R_{\rho^R_t}(X^R_t,\Lambda^R_t)\,\de t + \varsigma \norm{\tv^R_{\rho^R_t}(X^R_t,\Lambda^R_t)}\,\de B_t\,, \\
        \de \Lambda^R_t = \cT_{\Sigma^R_t}(X^R_t,\Lambda^R_t)\,\de t,\\
        (X^{R}_{0}, \Lambda^{R}_{0}) = (\widehat{X}_{0}, \widehat{\Lambda}_{0}),\\
          \Sigma^R_t = \Law(X^R_t,\Lambda^R_t), \\
    \rho^R_t = (\pi_x)_\sharp \Sigma^R_t = \Law(X^R_t)
    \end{dcases}
\end{equation}
in the time interval $[0, T]$. By virtue of Proposition~\ref{p-vR-lip} and Remark~\ref{r:RemarkOnSpaceMarginals}, the field $\tv^R$ satisfies condition ($w1$), which, combined with assumptions $(T1)$--$(T2)$, allows us to write that, for all $(\Psi_1,x_1,\lambda_1),(\Psi_2,x_2,\lambda_2)\in \cP_1(\R^d \times [0,1]) \times \R^d \times [0,1]$,
\begin{subequations}\label{e:VV}
\begin{align}
&\norm{\tv^R_{\mu_1}(x_1,\lambda_1) - \tv^R_{\mu_2}(x_2,\lambda_2)} \le L_R \big( \norm{x_1 - x_2} + \abs{\lambda_1 - \lambda_2}+W_1(\Psi_1,\Psi_2) \big),\\
&\abs{\cT_{\Psi_1}(x_1,\lambda_1) - \cT_{\Psi_2}(x_2,\lambda_2)} \le L_\cT \big( \norm{x_1 - x_2} + \abs{\lambda_1 - \lambda_2} + W_1(\Psi_1,\Psi_2) \big),
\end{align}
\end{subequations}
and, for every $(\Psi, x , \lambda)\in \cP_1(\R^d \times [0,1]) \times \R^d \times [0,1]$, 
\begin{equation}\label{eq:T_prop_geometry}
\lambda + \theta \cT_\Psi(x,\lambda) \in [0,1].
\end{equation}
Therefore, for every fixed $R > 0$, we are in a position to apply Proposition~\ref{p:existence-truncated}, ensuring the existence of a solution $Y^{R}= (X^{R}, \Lambda^{R})$ to~\eqref{e:truncated-system}, for any initial data  $\widehat{X}_0\colon\Omega\to\R^d$ $\sF_0$-measurable random variable belonging to $L^p(\Omega,\sF,\prob)$, $p \geq 2$, and $\widehat{\Lambda}_0\colon\Omega\to [0,1]$ $\sF_0$-measurable random variable, together with pathwise uniqueness in the class of strong solutions to~\eqref{e:truncated-system} such that the map $t \mapsto m_1(\rho^R_t)$ is bounded over $[0,T]$. In particular, $Y^{R}$ satisfies~\eqref{e:4-moment}.

In the next proposition, we provide a uniform control on~$m_{2} (\rho_{t}^{R})$ (and consequently on $m_{1} (\rho_{t}^{R})$) for $R>0$ and $t \in [0, T]$. This will allow us to prove the existence part of Theorem~\ref{t:well-posedness}.

\begin{proposition} 
\label{p:estimate-well-posed-truncated}
Under the assumptions of Theorem~\ref{t:well-posedness}, for $R>0$ let $Y^{R} = (X^{R}, \Lambda^{R})$ be the solution to~\eqref{e:truncated-system} in the interval $[0,T]$. Then, there exists a constant $A>0$ only depending on~$\nu$ and~$\varsigma$ such that
\begin{equation}
\label{e:estimate-well-posed-truncated}
    \sup_{t\in[0,T]}m_{2}^{2} (\rho^{R}_{t})  \le \bigl( \E( \| \widehat{X}_{0}\|^{2} )   + A\, T\bigr)e^{A\,  T},\quad \text{for every $R > 0$}.
\end{equation}
\end{proposition}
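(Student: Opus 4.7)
The plan is to apply It\^o's formula to $\|X_t^R\|^2$ and reduce the estimate to a scalar Gr\"onwall-type inequality, using a uniform-in-$R$ sublinear bound on the truncated velocity field $\tv^R$. First, applying It\^o's formula componentwise to the SDE for $X_t^R$ in~\eqref{e:truncated-system} yields
$$
\di \|X_t^R\|^2 = 2\nu\, X_t^R \cdot \tv^R_{\rho_t^R}(X_t^R,\Lambda_t^R)\,\di t + d\,\varsigma^2 \|\tv^R_{\rho_t^R}(X_t^R,\Lambda_t^R)\|^2\,\di t + 2\varsigma\, \|\tv^R_{\rho_t^R}(X_t^R,\Lambda_t^R)\|\, X_t^R \cdot \di B_t,
$$
where the factor $d\,\varsigma^2$ originates from the quadratic variation of the $d$ independent Brownian components.

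The crucial step is a uniform-in-$R$ sublinear bound on $\tv^R$. Since $\varphi_R(\mu) \in [0,1]$ and $\|e(\mu)\| \leq m_1(\mu)$, assumption $(f2)$ gives
$$
\|\tv^R_\mu(x,\lambda)\| \leq \|x\| + \varphi_R(\mu)\bigl(M_f(1 + m_1(\mu)) + m_1(\mu)\bigr) \leq \|x\| + (M_f+1)\bigl(1 + m_1(\mu)\bigr),
$$
so that $\|\tv^R_\mu(x,\lambda)\|^2 \leq 3(M_f+1)^2 \bigl(1 + \|x\|^2 + m_1(\mu)^2\bigr)$, with the constant independent of $R$. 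This is precisely where the cut-off $\varphi_R$ is decoupled from the estimate: its boundedness, combined with the linear growth from $(f2)$, makes the bound $R$-free.

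Taking expectations in the It\^o identity via a standard stopping-time localization, justified by estimate~\eqref{e:4-moment} of Proposition~\ref{p:existence-truncated} (which ensures $\E\bigl[\sup_{t \in [0,T]} \|X_t^R\|^p\bigr] < +\infty$ for $p\geq 2$, so that the local martingale is a true martingale up to the localization and the limit passes by dominated convergence), the stochastic-integral term vanishes. Combining Young's inequality $2 X \cdot v \leq \|X\|^2 + \|v\|^2$ with the above sublinear bound and with the trivial inequality $m_1(\rho_t^R)^2 \leq m_2^2(\rho_t^R) = \E(\|X_t^R\|^2)$, one arrives at
$$
\frac{\di}{\di t}\, \E\bigl(\|X_t^R\|^2\bigr) \leq A\bigl(\E(\|X_t^R\|^2) + 1\bigr),
$$
for a constant $A > 0$ depending on $\nu, \varsigma$ (and on the problem data $M_f$, $d$) but independent of $R$.

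Finally, setting $g(t) \coloneqq m_2^2(\rho_t^R) = \E(\|X_t^R\|^2)$, the scalar linear differential inequality above with $g(0) = \E(\|\widehat{X}_0\|^2)$ yields by Gr\"onwall $g(t) \leq \bigl(g(0) + A t\bigr) e^{A t}$ for every $t \in [0,T]$, and since the right-hand side is monotone in $t$, the supremum estimate~\eqref{e:estimate-well-posed-truncated} follows. The main obstacle is really the $R$-uniform sublinearity step: once this is secured, the passage to expectation via localization and the Gr\"onwall closure are routine.
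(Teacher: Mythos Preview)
Your proof is correct and follows essentially the same route as the paper: apply It\^o's formula to $\|X_t^R\|^2$, use the $R$-independent sublinear bound on $\tv^R$ coming from $\varphi_R\le 1$ and $(f2)$, take expectations to kill the martingale term (the paper does this directly using the moment bound~\eqref{e:4-moment}, you phrase it via localization), and close with Gr\"onwall. Your remark that the constant $A$ also depends on $M_f$ and $d$ is in fact more accurate than the statement's wording and matches what the paper observes in Remark~\ref{r:aremark}.
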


\begin{proof}
Let us apply It\^o formula to the process $Y^R$ and to the $C^2(\R^d)$ function $\psi(x) = \norm{x}^2$.
Therefore, for all~$t \in [0,T]$ we have  
\begin{equation*}
\begin{split}
\frac{\de}{\de t}\int_{\R^d \times [0,1]}\psi(x)\,\de\Sigma^R_t(x,\lambda) = &\ \nu \int_{\R^d \times [0,1]}\langle \nabla\psi(x), \tilde{v}^R_{\rho^R_t}(x,\lambda)\rangle\,\de\Sigma^R_t(x,\lambda) \\
&\ + \frac{1}{2}\int_{\R^d \times [0,1]}  \trace(\tsigma^R_{\rho^R_t}(x,\lambda)^T H_\psi(x)\tsigma^R_{\rho^R_t}(x,\lambda))\,\de\Sigma^R_t(x,\lambda),
\end{split}
\end{equation*}
whence 
\begin{equation*}
\begin{split}
\frac{\de}{\de t} m_{2}^{2} (\rho^{R}_{t})  =&\  2\nu \int_{\R^d \times [0,1]} \langle x, \tilde{v}^R_{\rho^R_t}(x,\lambda)\rangle\,\de\Sigma^R_t(x,\lambda) + \varsigma^2 d  \int_{\R^d \times [0,1]}  \norm{\tilde{v}^R_{\rho^R_t}(x,\lambda)}^2\,\de\Sigma^R_t(x,\lambda) \\
\eqqcolon &\  2\nu \, \mathrm{I}_t +  \varsigma^2 d \, \mathrm{II}_t\,.
\end{split}
\end{equation*}
We now estimate the two terms $\mathrm{I}_t, \mathrm{II}_t$ separately. By applying Cauchy–Schwarz inequality and recalling the expression of $\tv^R$, we obtain
\begin{align}
\label{e:It-1}
\mathrm{I}_t = &\int_{\R^d\times[0,1]} \langle x, \tilde{v}^R_{\rho^R_t}(x,\lambda)\rangle\,\de\Sigma^R_t(x,\lambda) \le \int_{\R^d\times[0,1]}\norm{x} \norm{\tilde{v}^R_{\rho^R_t}(x,\lambda)}\,\de\Sigma^R_t(x,\lambda) \\ 
\le& \int_{\R^d\times[0,1]}\norm{x} (\norm{x} + \norm{f(\rho^R_t)} + \norm{e(\rho^R_t)})\,\de\Sigma^R_t(x,\lambda) \nonumber\\
=& \int_{\R^d}\norm{x}^2\,\de\rho^R_t(x) + \int_{\R^d}\norm{x} \norm{f(\rho^R_t)}\,\de\rho^R_t(x) + \int_{\R^d}\norm{x} \norm{e(\rho^R_t)}\,\de\rho^R_t(x)\,.\nonumber
\end{align}
By $(f2)$, H\"older and Young inequality we infer from~\eqref{e:It-1} that
\begin{equation}
\label{e:It-2}
\begin{split}
\mathrm{I}_t & \leq m_{2}^{2} (\rho^{R}_{t}) + M_{f} ( 1 + m_{1} (\rho^{R}_{t}) ) m_{1} (\rho^{R}_{t}) + m_{1}^{2} (\rho^{R}_{t}) \leq 2 ( M_{f} + 1) m_{2}^{2} (\rho^{R}_{t})\,. 
\end{split}
\end{equation}
As for $\mathrm{II}_t$, in view of $(f2)$ we get that
\begin{align}
\label{e:IIt-1}
\mathrm{II}_t = &\int_{\R^d} \norm{\tilde{v}^R_{\rho^R_t}(x,\lambda)}^2\,\de\rho^R_t(x) \le \int_{\R^d} (\norm{x} + \norm{f(\rho^R_t)} + \norm{e(\rho^R_t)})^2\,\de\rho^R_t(x) \\ 
\le& \ 3\int_{\R^d}  (\norm{x}^2 + \norm{f(\rho^R_t)}^2 + \norm{e(\rho^R_t)}^2)\,\de\rho^R_t(x) 
= 
6\,  m_{2}^{2} (\rho^{R}_{t}) + 6\, M_{f}^{2} ( 1 + m_{2}^{2} (\rho^{R}_{t}) )\bigr). \nonumber
\end{align}

Therefore, combining~\eqref{e:It-2} and \eqref{e:IIt-1}  we conclude that for every $t \in [0,T]$
\begin{equation*}
\begin{split}
&m_{2}^{2}(\rho^{R}_{t}) \leq m_{2}^{2} (\rho^{R}_{0}) + A \int_{0}^{t} \bigl( 1 + m_{2}^{2} (\rho^{R}_{s})\bigr) \, \de s
\end{split} 
\end{equation*}
for some positive constant~$A$ only depending on~$\nu$ and~$\varsigma$. By Gr\"onwall inequality we infer that
\begin{displaymath}
 m_{2}^{2} (\rho^{R}_{t})  \le (m_{2}^{2} (\rho^{R}_{0})  + A t)e^{A t},\quad \text{for all } t \in [0,T]\,.
\end{displaymath}
Taking the supremum over $t \in [0, T]$ and recalling that $\rho^{R}_{0} = \Law (\widehat{X}_{0})$ we conclude for~\eqref{e:estimate-well-posed-truncated}.
\end{proof}

We are now in a position to prove Theorem~\ref{t:well-posedness}.

\begin{proof}[Proof of Theorem~\ref{t:well-posedness}]
We divide the proof in the existence and uniqueness parts.

\noindent{\bf Existence:} For all $R > 0$ we consider $Y^{R}= (X^{R}, \Lambda^{R})$ the solution to~\eqref{e:truncated-system} provided by Propositions~\ref{p:existence-truncated} and~\ref{p:estimate-well-posed-truncated}. We define the (deterministic) time
\begin{equation*}
    T_R \coloneqq \inf\bigl\{t \in [0,T] : \, m_{2} (\rho^{R}_{t})  > R\bigr\},
\end{equation*}
(we define $T_R = T$ when the set in empty). We observe that, as long as $t \le T_R$, the process $Y^R$ solves the original problem \eqref{e:system-SDE}, since the value of the truncation function $\varphi_R (\rho^{R}_{t})$ is constantly $1$. Moreover, by pathwise uniqueness of the solution to the truncated problem \eqref{e:truncated-system}, we have that, if $R_1 < R_2$\,, then $Y^{R_1} = Y^{R_2}$ on the time interval $[0, T_{R_1}]$, whence $T_{R_1} \le T_{R_2}$ (since the function $t \mapsto m_{2}
(\rho^{R}_{t}) $ is continuous over $[0,T]$, we have that $ m_{2}
(\rho^{R}_{T_{R}} )   = R$). Furthermore, by virtue of the uniform estimate~\eqref{e:estimate-well-posed-truncated}, if we choose~$R$ large enough to satisfy
\begin{equation*}
    \sup_{t\in[0,T]} m_{1} (\rho^{R}_{t}) \le \sup_{t\in[0,T]} m_2 (\rho^{R}_{t})  \le \sqrt{\bigl( \E( \| \widehat{X}_{0}\|^{2})   + A T\bigr)e^{A T}} \le R,
\end{equation*}
one has that $T_R = T$ and the truncated problem \eqref{e:truncated-system} coincide with the original one \eqref{e:system-SDE} on the entire time interval $[0,T]$. Therefore, with such a choice of $R$ we obtain that, for all $t \in [0,T]$,
\begin{equation*}
\begin{split}
& X^R_t = \widehat{X}_0 + \nu\int_0^t\tilde{v}^R_{\rho^R_s}(X^R_s,\Lambda^R_s)\,\de s + \varsigma\int_0^t \norm{\tilde{v}^R_{\rho^R_s}(X^R_s,\Lambda^R_s)}\,\de B_s \\
&\quad \,\,\, = \widehat{X}_0 + \nu\int_0^t v_{\rho^R_s}(X^R_s,\Lambda^R_s)\,\de s + \varsigma\int_0^t \norm{v_{\rho^R_s}(X^R_s,\Lambda^R_s)}\,\de B_s\,,\\
&\Lambda^R_t = \widehat{\Lambda}_0 + \int_0^t \cT_{\Sigma^R_s}(X^R_s,\Lambda^R_s)\,\de s.
\end{split}
\end{equation*}
Thus, $Y^R= (X^{R}, \Lambda^{R})$ is a strong solution to~\eqref{e:system-SDE}.
This concludes the existence part of the proof.
Estimate~\eqref{e:moment-estimate} follows from Proposition~\ref{p:estimate-well-posed-truncated} and from the compactness of $[0,1]$, whereas inequality~\eqref{e:momento4} is a consequence of the \emph{a priori} estimate of Proposition~\ref{p:a-priori-estimate-SDE}, which is valid thanks to Proposition~\eqref{p:some-estimates}, ensuring the validity of condition ($w2$) for the field $v$.


\noindent{\bf Uniqueness:}
As for uniqueness, we observe that, if we take two strong solutions $Y^1$, $Y^2$ to \eqref{e:system-SDE} such that the maps $t \mapsto m_1(\rho^i_t)$ ($i = 1,2$) are bounded over $[0,T]$, then they are also strong solutions of a particular instance of the $R$-truncated problem \eqref{e:truncated-system}, with $R$ large enough to ensure that
\begin{equation*}
    \sup_{t \in [0,T]} m_{1} (\rho^{i}_{t})  \le R, \quad i=1,2.
\end{equation*}
Since pathwise uniqueness (in this class of strong solutions) holds for the truncated problem \eqref{e:truncated-system} thanks to Proposition~\ref{p:existence-truncated}, it also hold for the original problem \eqref{e:system-SDE}.
\end{proof}

\begin{remark}
\label{r:aremark}
Notice that the dependence of the constant~$C$ in~\eqref{e:moment-estimate} on~$f$ is only due to the constant~$M_{f}\geq0$ defined in~$(f2)$. This can be observed by inspecting the proof of Proposition~\ref{p:estimate-well-posed-truncated} (cf.~\eqref{e:It-2}--\eqref{e:IIt-1}).
\end{remark}



\subsection{PDE formulation and finite-particle approximation}

We study the PDE counterpart of the SDE system~\eqref{e:system-SDE} and its approximation by means of a finite particles system.

\begin{proposition}
    \label{p:PDE-formulation-CBO}
    Given $T > 0$, let $B \colon \Omega \to C([0,T],\R^d)$ a $d$-dimensional standard Brownian motion defined on a filtered probability space $\bigl(\Omega, \sF, (\sF_t)_{t\in[0,T]},\prob\bigr)$, $\widehat{X}_0\colon\Omega\to\R^d$ an $\sF_0$-measurable random variable belonging to $L^p(\Omega,\sF,\prob)$ with $p \geq 2$, and $\widehat{\Lambda}_0\colon\Omega\to [0,1]$ an $\sF_0$-measurable random variable. Assume that $(f1)$--$(f2)$ and $(T1)$--$(T2)$ hold. Then, the PDE 
    \begin{equation}
    \label{e:PDE-CBO}
        \partial_{t} \Sigma_{t} - \frac{\varsigma^{2}}{2} \Delta_{x} ( \| v_{\rho_{t}} (x, \lambda) \|^{2} \Sigma_{t}) = {\rm div} \Big[ \big( \nu \, v_{\rho_{t}} (x, \lambda) \, , \,  \cT_{\Sigma_{t}} (x, \lambda)  \big) \Sigma_{t} \Big]\,,
    \end{equation}
    with initial datum $\Sigma_{0} = {\rm Law} (\widehat{X}_{0}, \widehat{\Lambda}_{0})$ and $\rho_{t} = (\pi_{x})_{\#} \Sigma_{t}$ admits a unique solution in $C([0, T]; \cP_{2} (\R^{d} \times [0, 1]))$.
\end{proposition}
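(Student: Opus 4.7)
The strategy is to reduce both existence and uniqueness for \eqref{e:PDE-CBO} to the stochastic well-posedness Theorem~\ref{t:well-posedness}, using It\^o's formula in one direction and a superposition argument in the other. For \textbf{existence}, I take $(X,\Lambda)$ to be the strong solution of \eqref{e:system-SDE} given by Theorem~\ref{t:well-posedness} and set $\Sigma_t \coloneqq \Law(X_t,\Lambda_t)$; the required continuity $\Sigma \in C([0,T];\cP_2(\R^d\times[0,1]))$ is immediate from the theorem (since $W_2 \leq W_p$ for $p \geq 2$), while condition $(T2)$ keeps $\Lambda_t \in [0,1]$ so that $\mathrm{supp}(\Sigma_t) \subseteq \R^d \times [0,1]$. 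Applying It\^o's formula to $\varphi(X_t,\Lambda_t)$ for an arbitrary $\varphi \in C^\infty_c(\R^d\times\R)$ and taking expectation---legitimate because the moment estimate \eqref{e:momento4} combined with the linear growth $(w2)$ from Proposition~\ref{p:some-estimates} turns the stochastic integral into a genuine martingale---produces precisely the weak formulation of \eqref{e:PDE-CBO} with initial datum $\Sigma_0 = \Law(\widehat{X}_0,\widehat{\Lambda}_0)$.

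For \textbf{uniqueness}, let $\Sigma^1,\Sigma^2 \in C([0,T];\cP_2(\R^d\times[0,1]))$ be two solutions sharing the initial datum, and write $\rho^i_t = (\pi_x)_\sharp \Sigma^i_t$. Regarding each $\Sigma^i$ as given freezes the coefficients into the Borel fields
\begin{equation*}
b^i(t,x,\lambda) \coloneqq \bigl(\nu\, v_{\rho^i_t}(x,\lambda),\ \cT_{\Sigma^i_t}(x,\lambda)\bigr),\qquad a^i(t,x,\lambda) \coloneqq \varsigma^{2}\|v_{\rho^i_t}(x,\lambda)\|^2 I_d,
\end{equation*}
which by $(f2)$, $(T1)$, $(T2)$ and the $\cP_2$-continuity of $\Sigma^i$ satisfy uniform-in-$t$ linear growth in $(x,\lambda)$ and are integrable against $\Sigma^i_t \otimes \de t$ on $[0,T]$. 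One may then apply the superposition principle for Fokker--Planck equations to realize each $\Sigma^i$ as the time-marginal law of a weak solution of the linear SDE driven by $b^i, a^i$. The consistency $\Law(X^i_t,\Lambda^i_t) = \Sigma^i_t$ means that this lifted process is in fact a weak solution of the full McKean--Vlasov system~\eqref{e:system-SDE}, with $t \mapsto m_1(\rho^i_t)$ bounded on $[0,T]$ since $\Sigma^i \in C([0,T];\cP_2)$. Pathwise uniqueness granted by Theorem~\ref{t:well-posedness}, combined with the McKean--Vlasov version of the Yamada--Watanabe principle, forces uniqueness in law, and therefore $\Sigma^1 = \Sigma^2$.

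The main obstacle is closing this loop in the uniqueness step: since the coefficients depend nonlocally on the unknown law, the superposition principle can only be invoked after provisionally freezing the candidate PDE solution inside the coefficients; one then has to read the consistency between one-dimensional marginal law and frozen measure as the statement that the lifted process solves the McKean--Vlasov SDE, so that Theorem~\ref{t:well-posedness} becomes applicable. The quantitative hypotheses (Borel measurability and suitable integrability of $b^i, a^i$ along $\Sigma^i$) that make the superposition step legal all follow from the second-moment bound $\Sigma^i \in C([0,T];\cP_2)$ together with the Lipschitz/linear-growth properties of $v$ and $\cT$ established in Section~\ref{s:well-posedness}.
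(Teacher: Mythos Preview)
Your existence argument coincides with the paper's: apply It\^o's formula to the strong solution of~\eqref{e:system-SDE} provided by Theorem~\ref{t:well-posedness} and take expectations.

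For uniqueness you take a genuinely different route. Given a second PDE solution~$\Theta$, the paper freezes~$\Theta$ in the coefficients and invokes a \emph{linear} Fokker--Planck uniqueness result (the Gr\"onwall argument of Le~Bris--Lions for degenerate diffusions) to conclude that the frozen linear PDE has~$\Theta$ as its only solution in $C([0,T];\cP_2)$. It then constructs, on the \emph{original} filtered space with the same Brownian motion~$B$ and initial data $(\widehat X_0,\widehat\Lambda_0)$, the strong solution $(\bar X,\bar\Lambda)$ of the corresponding linear SDE; its law solves the linear PDE, hence equals~$\Theta$, so $(\bar X,\bar\Lambda)$ is in fact a strong solution of the McKean--Vlasov system~\eqref{e:system-SDE}. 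Pathwise uniqueness from Theorem~\ref{t:well-posedness} then gives $\Theta=\Sigma$ directly, with no Yamada--Watanabe step. Your approach instead replaces the linear PDE uniqueness by the superposition principle (lifting each $\Sigma^i$ to a weak SDE solution, possibly on a new probability space) and then needs a McKean--Vlasov version of Yamada--Watanabe to turn the pathwise uniqueness of Theorem~\ref{t:well-posedness}---stated there only for strong solutions---into uniqueness in law. Both external tools exist and the integrability you check is the correct one, so the argument closes; the trade-off is that the paper's route is essentially self-contained and stays on the fixed probability space, while yours is more modular but imports two nontrivial results and tacitly assumes that the pathwise-uniqueness proof behind Theorem~\ref{t:well-posedness} extends from strong to weak solutions.
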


\begin{proof}
The existence of solutions follows directly from Theorem~\ref{t:well-posedness}. Indeed, we show that the curve $t \mapsto \Sigma_{t}$ provided there is a solution to~\eqref{e:PDE-CBO}. To this end, let $\varphi \in C^{\infty}_{c} (\R^{d} \times \R)$ and let us test system~\eqref{e:system-SDE} using It\^o formula (cf.~\cite{Oksendal}):
\begin{align}
\label{e:test}
\varphi(X_{t}, \Lambda_{t}) = & \  \varphi(\widehat{X}_{0}, \widehat{\Lambda}_{0}) - \int_{0}^{t} \nu \nabla_{x} \varphi (X_{s}, \Lambda_{s}) \cdot v_{\rho_{s}} (X_{s}, \Lambda_{s}) \, \di s
\\
&
+ \int_{0}^{t} \nabla_{\lambda} \varphi (X_{s}, \Lambda_{s}) \, \cT_{\Sigma_{s}} (X_{s}, \Lambda_{s}) \, \di s  
+ \frac{\varsigma^{2}}{2} \int_{0}^{t} \Delta_{x} \varphi(X_{s}, \Lambda_{s}) \| v_{\rho_{s}} ( X_{s},  \Lambda_{s}) \|^{2} \, \di s \nonumber
\\
&
+ \varsigma \int_{0}^{t} \nabla_{x} \varphi(X_{s}) \| v_{\rho_{s}} (X_{s} , \Lambda_{s})  \|  \, \di B_{s}\,. \nonumber
\end{align}
Averaging~\eqref{e:test} over $\Om$ and using the fact that (see, e.g.,~\cite[Theorem~3.2.1]{Oksendal})
\begin{displaymath}
\EE \bigg( \int_{0}^{t} \nabla_{x} \varphi(X_{s}) \|v_{\rho_{s}} (X_{s} , \Lambda_{s})  \|  \di B_{s} \bigg) = 0\,,
\end{displaymath}
we deduce that
\begin{align*}
\label{e:test-2}
\int_{\R^{d} \times [0, 1]} \varphi(x, \lambda) \, \di \Sigma_{t} = & \ \int_{\R^{d}} \varphi(x, \lambda) \, \di \Sigma_{0} - \nu \int_{0}^{t} \int_{\R^{d} \times [0, 1]} \nabla_{x} \varphi(x, \lambda) \cdot v_{\rho_{s}} (x, \lambda )  \, \di \Sigma_{s} \, \di s 
\\
&
+\int_{0}^{t} \int_{\R^{d}\times[0, 1]} \nabla_{\lambda} \varphi(x, \lambda) \, \cT_{\Sigma_{s}} (x, \lambda) \, \di \Sigma_{s}  \, \di s  \nonumber
\\
&
+\frac{\varsigma^{2}}{2}  \int_{0}^{t} \int_{\R^{d} \times [0, 1]} \Delta_{x} \varphi(x, \lambda) \| v_{\rho_{s}} (x, \lambda)   \|^{2} \, \di \Sigma_{s} \, \di s \,.\nonumber
\end{align*}
Hence, $\Sigma_{t}$ solves~\eqref{e:PDE-CBO}. 

We now show that $\Sigma_{t}$ is the unique solution to~\eqref{e:PDE-CBO}. Let us assume that $t \mapsto \Theta_{t}$ is another solution to~\eqref{e:PDE-CBO}  in $C([0, T]; \cP_{2} (\R^{d} \times [0, 1]))$ and let $\theta_{t}\coloneqq (\pi_{x})_{\#} \Theta_{t}$. We notice that the linear PDE
\begin{equation}
\label{e:3.16}
\partial_{t} \Upsilon_{t} - \frac{\varsigma^{2}}{2} \Delta_{x} \big( \| v_{\theta_{t}} (x, \lambda) \|^{2} \Upsilon_{t} \big) = - {\rm div} \big(  (\nu \, v_{\theta_{t}} (x, \lambda) , \cT_{\Theta_{t}} (x, \lambda) \Upsilon_{t} )  \big)\,, \qquad \Upsilon_{0} = \Sigma_{0} 
\end{equation}
admits a unique solution in $C([0, T]; \cP_{2} (\R^{d} \times [0, 1]))$, which is indeed $\Theta_{t}$. This can be deduced using the Gr\"onwall argument of~\cite[Section 5.4]{Lions-LeBris}) with the choice $\sigma = \diag \big( \| v_{\theta_{t}} (x, \lambda) \|^{2} , \ldots,  \| v_{\theta_{t}} (x, \lambda) \|^{2} ,   0 \big) \in \R^{(d+1) \times (d+1)} $.

We further consider the auxiliary SDE system
\begin{align*}
\begin{cases}
\di X_{t} = \nu \, v_{\theta_{t}} (X_{t}, \Lambda_{t}) \, \di t + \varsigma \| v_{\theta_{t}} (X_{t}, \Lambda_{t}) \| \, \di B_{t}\,,\\
\di \Lambda_{t} = \cT_{\Theta_{t}} (X_{t}, \Lambda_{t}) \, \di t \,,\\
(X_{0}, \Lambda_{0}) = ( \widehat{X}_{0}, \widehat{\Lambda}_{0})\,.
\end{cases}
\end{align*} 
In view of (a simplified version of) Theorem~\ref{t:well-posedness}, the above system admits a unique solution $(\overline{X}_{t}, \overline{\Lambda}_{t})$, whose law $\Xi \coloneqq \Law ( \overline{X}, \overline{\Lambda})$ solves~\eqref{e:3.16}. This implies that $\Xi = \Theta$ and that $(\overline{X}_{t}, \overline{\Lambda}_{t})$ also solves~\eqref{e:system-SDE}. Thus, it must be $\Theta_{t} = \Sigma_{t}$ for every $t \in [0, T]$. 
\end{proof}

We now consider the finite particle approximation of~\eqref{e:system-SDE}. From now on, we work with $p =4$. Let $\widehat{X}_{0} \in L^4(\Omega,\sF,\prob)$ and $\widehat{\Lambda}^{i}_{0} \in L^{\infty} (\Om, \sF, \prob; [0, 1])$. For $N \in \mathbb{N}$, let $(\widehat{X}^{i}_{0}, \widehat{\Lambda}^{i}_{0})$ be $N$ random variables distributed as $(\widehat{X}_{0}, \widehat{\Lambda}_{0})$. We consider the SDE system
\begin{equation}
\label{e:particle-SDE}
\begin{cases}
\di X^{i, N}_{t} = \nu \, v_{\rho^{N}_{t}} (X^{i, N}_{t} , \Lambda^{i, N}_{t}) \, \di t + \varsigma \|  v_{\rho^{N}_{t}} (X^{i, N}_{t} , \Lambda^{i, N}_{t})\| \, \di B_{t}\,,\\[1mm]
\di \Lambda^{i, N}_{t} = \cT_{\Sigma^{N}_{t}} (X^{i, N}_{t} , \Lambda^{i, N}_{t}) \, \di t \,,\\[1mm]
(X^{i, N}_{0} , \Lambda^{i, N}_{0}) = ( \widehat{X}^{i}_{0}, \widehat{\Lambda}^{i}_{0})\,,\\[1mm]
\Sigma^{N}_{t} = \frac{1}{N} \sum_{j=1}^{N} \delta_{(X^{j, N}_{t} , \Lambda^{j, N}_{t})}\,, \ \rho^{N}_{t} =  \frac{1}{N} \sum_{j=1}^{N} \delta_{ X^{j, N}_{t} }\,.
\end{cases}
\end{equation}
In particular, $\rho^{N}_{t} = (\pi_{x})_{\#} \Sigma^{N}_{t}$. The first result concerns the well-posedness of~\eqref{e:particle-SDE}.

\begin{proposition}
\label{p:well-posed-finite}
 Given $T > 0$, let $B \colon \Omega \to C([0,T],\R^d)$ a $d$-dimensional standard Brownian motion defined on a filtered probability space $\bigl(\Omega, \sF, (\sF_t)_{t\in[0,T]},\prob\bigr)$, $\widehat{X}_0\colon\Omega\to\R^d$ an $\sF_0$-measurable random variable belonging to $L^4(\Omega,\sF,\prob)$, $\widehat{\Lambda}_0\colon\Omega\to [0,1]$ an $\sF_0$-measurable random variable, and let $(\widehat{X}^{i}_{0}, \widehat{\Lambda}^{i}_{0})$ be $N$ random variables distributed as $(\widehat{X}_{0}, \widehat{\Lambda}_{0})$. Assume that $(f1)$--$(f2)$ and $(T1)$--$(T2)$ hold. Then,~\eqref{e:particle-SDE} admits a unique solution~$(X^{i, N}_{t}, \Lambda^{i, N}_{t})_{i=1}^{N}$. Moreover, $(X^{i, N}_{t}, \Lambda^{i, N}_{t})$ are i.i.d.~and there exists $C>0$ such that for every $N \in \mathbb{N}$
 \begin{equation}
 \label{e:moments-finite}
 \sup_{\substack{j=1, \ldots, N\\ t \in [0, T]}} \E \Big( \| ( X^{j, N}_{t}, \Lambda^{j, N}_{t} ) \|^{2} + \| ( X^{j, N}_{t}, \Lambda^{j, N}_{t} ) \|^{4} \Big) \leq C \Big( 1 + \E ( \| \widehat{X}_{0} \|^{4}) \Big) \,.
 \end{equation}
\end{proposition}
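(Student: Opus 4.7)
The plan is to view system~\eqref{e:particle-SDE} as a single SDE on the product space $(\R^{d}\times[0,1])^{N}$ and to reduce its well-posedness to the same truncation argument used in Section~\ref{s:well-posedness}. The key elementary observation is that, for any two configurations $\bm{y}=(x_{j},\lambda_{j})_{j=1}^{N}$ and $\tilde{\bm{y}}=(\tilde{x}_{j},\tilde{\lambda}_{j})_{j=1}^{N}$, the induced empirical measures satisfy
\[
W_{1}(\Sigma^{N},\tilde\Sigma^{N})\le \frac{1}{N}\sum_{j=1}^{N}\bigl(\|x_{j}-\tilde{x}_{j}\|+|\lambda_{j}-\tilde{\lambda}_{j}|\bigr),\qquad W_{1}(\rho^{N},\tilde\rho^{N})\le \frac{1}{N}\sum_{j=1}^{N}\|x_{j}-\tilde{x}_{j}\|.
\]
Combined with $(f1)$, $(f2)$, $(T1)$ and $(T2)$, this makes the drift and diffusion coefficients of the joint system locally Lipschitz in the state and of at most linear growth, while the constraint $\Lambda^{i,N}_{t}\in[0,1]$ is preserved by~$(T2)$.

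Next, I would truncate $v$ exactly as in Section~\ref{s:well-posedness} through the cut-off $\varphi_{R}$, obtaining a fully globally Lipschitz system on $(\R^{d}\times[0,1])^{N}$, to which the reasoning behind Proposition~\ref{p:existence-truncated} directly applies (by reading the empirical measures as functions of the augmented state) and yields a unique strong solution $(X^{i,N,R},\Lambda^{i,N,R})_{i=1}^{N}$ on $[0,T]$. To remove the truncation I would reproduce the argument of Proposition~\ref{p:estimate-well-posed-truncated}: apply It\^o's formula to $\|X^{i,N,R}_{t}\|^{2}$ and $\|X^{i,N,R}_{t}\|^{4}$, sum over $i$ and divide by~$N$, then use $(f2)$ together with
\[
\|f(\rho^{N,R}_{t})\|\le M_{f}\bigl(1+m_{1}(\rho^{N,R}_{t})\bigr),\qquad m_{1}(\rho^{N,R}_{t})\le m_{2}(\rho^{N,R}_{t})=\Bigl(\tfrac{1}{N}\textstyle\sum_{j}\|X^{j,N,R}_{t}\|^{2}\Bigr)^{1/2},
\]
to obtain a Gr\"onwall inequality for the averages $\tfrac{1}{N}\sum_{i}\E(\|X^{i,N,R}_{t}\|^{p})$ ($p=2,4$) with constants depending only on $\nu,\varsigma,M_{f},T$, neither on $N$ nor~$R$. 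The stopping-time argument already used in the proof of Theorem~\ref{t:well-posedness} then shows that the truncation is inactive on $[0,T]$ once~$R$ is large enough, providing existence, pathwise uniqueness, and an $N$-uniform bound on the averaged $p$-th moments.

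The passage from averaged to per-particle expectations required by~\eqref{e:moments-finite} rests on the identical distribution of the particles, which I would derive from the permutation symmetry of system~\eqref{e:particle-SDE}: for every permutation $\pi$ of $\{1,\dots,N\}$ the relabelled process $(X^{\pi(i),N},\Lambda^{\pi(i),N})_{i=1}^{N}$ solves an identically distributed copy of~\eqref{e:particle-SDE}, because $\Sigma^{N}_{t}$ and $\rho^{N}_{t}$ are permutation-invariant and the initial data $(\widehat{X}^{i}_{0},\widehat{\Lambda}^{i}_{0})$ are i.i.d. Pathwise uniqueness then forces the joint law of $(X^{i,N},\Lambda^{i,N})_{i=1}^{N}$ to be exchangeable; in particular $\Law(X^{i,N}_{t},\Lambda^{i,N}_{t})$ does not depend on $i$, so $\E\|Y^{i,N}_{t}\|^{p}=\tfrac{1}{N}\sum_{j}\E\|Y^{j,N}_{t}\|^{p}$ is controlled by the previous averaged Gr\"onwall estimate, and the contribution of $\Lambda^{i,N}_{t}\in[0,1]$ is trivially bounded.

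The step I expect to be delicate is precisely the $N$-independent moment bound: it is essential to exploit the fact that $m_{1}(\rho^{N}_{t})$ and $m_{2}(\rho^{N}_{t})$ scale like \emph{averages} of the individual particle norms, so that summing the It\^o inequality over $i$ closes on the average second (and fourth) moment with $N$-free constants. A naive bound on $\|v_{\rho^{N}_{t}}\|$ replacing the empirical moments by pointwise maxima would instead produce an $N$-dependent blow-up and prevent passing to the mean-field limit later on.
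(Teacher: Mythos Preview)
Your approach is essentially the same as the paper's, which gives only a three-line sketch invoking the truncation argument of Theorem~\ref{t:well-posedness}, It\^o--Gr\"onwall for the moment bounds, and symmetry for the identical distribution. Your version is considerably more detailed and correctly isolates the key point that the $N$-uniform constants come from closing the Gr\"onwall inequality on the \emph{averaged} moments via $m_{1}(\rho^{N}_{t})\le m_{2}(\rho^{N}_{t})$.

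One subtlety you glide over: in Theorem~\ref{t:well-posedness} the times $T_{R}=\inf\{t:m_{2}(\rho^{R}_{t})>R\}$ are \emph{deterministic} because $\rho^{R}_{t}=\Law(X^{R}_{t})$, and the Gr\"onwall bound on $m_{2}(\rho^{R}_{t})$ directly forces $T_{R}=T$ for $R$ large. In the particle system, $m_{2}(\rho^{N,R}_{t})$ is \emph{random}, so your Gr\"onwall bound on $\E[m_{2}^{2}(\rho^{N,R}_{t})]$ does not by itself guarantee that the truncation is inactive pathwise; the stopping times become genuinely stochastic and the argument of Theorem~\ref{t:well-posedness} does not transfer verbatim. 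The paper flags this in a footnote and defers to standard non-explosion arguments for locally Lipschitz SDEs with linear growth (e.g., \cite[Section~9.5]{Baldi}); your observation that $(f2)$ gives linear growth in the joint state is exactly what is needed there. Finally, your exchangeability argument is the correct justification for identical distribution---the particles are of course not independent, despite the statement's phrasing.
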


\begin{proof}
Existence and uniqueness follow the truncation argument of Theorem~\ref{t:well-posedness}.\footnote{Notice that the truncation argument involves stochastic stopping times. Nonetheless this can be overcome using the techniques of~\cite[Section~9.5]{Baldi}.} The first estimate in~\eqref{e:moments-finite} can be obtained by testing equation~\eqref{e:particle-SDE} with $ \psi(x) = \| x\|^{2}$. Thanks to the integrability of the initial data we may recover~\eqref{e:moments-finite} applying It\^o formula and Gr\"onwall lemma. The solutions are i.i.d.~by the symmetry of the velocity fields.
\end{proof}

The compactness in law of $\Sigma^{N}_{t}$ and~$\rho^{N}_{t}$ follows by Aldous criterium~\cite[Section 16]{Billingsley}.

\begin{lemma}
\label{l:particle-compactness}
Under the assumptions of Proposition~\ref{p:well-posed-finite}, there exists a random measure $\Sigma \colon \Om \to \cP( C ([0, T];  \R^{d} \times [0, 1]))$ such that, up to a subsequence, ${\rm Law} (\Sigma^{N})$ converges narrow to ${\rm Law} (\Sigma)$ in $\cP(\cP (C([0, T]; \R^{d} \times [0, 1]) )) $. Furthermore, for every $t \in [0, T]$ the sequence ${\rm Law} (\Sigma_{t}^{N})$ converges to ${\rm Law}( \Sigma_{t})$ for every $t \in [0, T]$ in $(\cP_{2}( \cP_{2}(\R^{d} \times [0, 1])); W_{2})$.
\end{lemma}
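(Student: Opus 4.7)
The plan unfolds in three steps: prove tightness of the one-particle processes in path space via Aldous' criterion, transfer it to the empirical measures by exchangeability, and finally upgrade the marginal narrow convergence to $W_2$-convergence using the $L^4$ bound \eqref{e:moments-finite}.

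First, I would verify Aldous' criterion for $(X^{1,N},\Lambda^{1,N})$ viewed as random elements of $C([0,T];\R^d\times[0,1])$. Pointwise tightness of the one-time marginals is immediate from \eqref{e:moments-finite}. For the quantitative continuity estimate, fix an $(\sF_t)_t$-stopping time $\tau\le T-h$. By the sublinear growth of $v$ (condition $(w2)$, valid thanks to Proposition~\ref{p:some-estimates}) combined with the second moment bound in \eqref{e:moments-finite}, the drift piece $\int_\tau^{\tau+h}\nu\, v_{\rho^N_s}(X^{1,N}_s,\Lambda^{1,N}_s)\,\de s$ is bounded by $Ch$ in $L^2(\Om)$ via Cauchy--Schwarz, while the stochastic integral is bounded by $C\sqrt{h}$ via It\^o's isometry applied at stopping times. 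The $\Lambda^{1,N}$ increments are treated analogously using $(T1)$. Chebyshev's inequality then delivers the Aldous estimate and hence tightness of $\Law(X^{1,N},\Lambda^{1,N})$ in $\cP(C([0,T];\R^d\times[0,1]))$.

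Since the initial conditions are i.i.d.\ and the coefficients depend on the particles symmetrically through $\Sigma^N_t$, the processes $(X^{i,N},\Lambda^{i,N})_{i=1}^N$ are exchangeable. A classical argument (in the spirit of Sznitman's propagation-of-chaos theory) then shows that tightness of the one-particle laws is equivalent to tightness of $\Law(\Sigma^N)$ in $\cP(\cP(C([0,T];\R^d\times[0,1])))$. Prokhorov's theorem supplies a subsequence along which $\Law(\Sigma^N)$ converges narrowly to the law of some random measure $\Sigma$, necessarily supported on $C([0,T];\R^d\times[0,1])$. Since evaluation at time $t$ is continuous in the narrow topology, this yields $\Law(\Sigma^N_t)\to\Law(\Sigma_t)$ narrowly on $\cP(\cP(\R^d\times[0,1]))$ for every $t\in[0,T]$.

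The main technical obstacle is the final step: upgrading this narrow convergence to $W_2$-convergence on $\cP_2(\cP_2(\R^d\times[0,1]))$. Here I would exploit the $L^4$ bound \eqref{e:moments-finite}: by Jensen's inequality applied to the convex function $s\mapsto s^2$,
\[
W_2(\Sigma^N_t,\delta_0)^4 \;=\; \Big(\tfrac{1}{N}\sum_{j=1}^N \|(X^{j,N}_t,\Lambda^{j,N}_t)\|^2\Big)^2 \;\le\; \tfrac{1}{N}\sum_{j=1}^N \|(X^{j,N}_t,\Lambda^{j,N}_t)\|^4,
\]
so $\sup_N\E\bigl[W_2(\Sigma^N_t,\delta_0)^4\bigr]<+\infty$. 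This provides uniform integrability of $W_2(\cdot,\delta_0)^2$ under $\Law(\Sigma^N_t)$ and, combined with the characterization of Wasserstein convergence in \cite[Theorem~6.9]{Villani} applied iteratively at the inner and the outer level, promotes the narrow convergence established above to the asserted convergence in $(\cP_2(\cP_2(\R^d\times[0,1])),W_2)$.
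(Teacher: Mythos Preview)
Your proof is correct and follows essentially the same strategy as the paper's own argument: Aldous' criterion for the one-particle path laws via the moment bounds~\eqref{e:moments-finite} and the sublinear growth of the coefficients, transfer to tightness of $\Law(\Sigma^N)$ by symmetry/exchangeability, and the upgrade to $W_2$-convergence of the time marginals using the uniform fourth-moment estimate in the form $\E[W_2(\Sigma^N_t,\delta_0)^4]\le \sup_j \E[\|(X^{j,N}_t,\Lambda^{j,N}_t)\|^4]$. Your formulation is in fact slightly cleaner in two places: you state the Aldous estimate for stopping times (the paper writes it for deterministic times), and you correctly say ``exchangeable'' where the paper writes ``i.i.d.''
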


\begin{proof}
Due to the symmetry of system~\eqref{e:particle-SDE}, it is enough to show that ${\rm Law} (X^{1, N}, \Lambda^{1, N})$ is tight in $\cP( C([0, T]; \R^{d}\times [0, 1]))$. To this end, we want to apply Aldous criterium (cf.~\cite[Section 16]{Billingsley}). We first show that ${\rm Law} (X^{1, N}_{t}, \Lambda^{1, N}_{t})$ is tight as a sequence in $\cP(\R^{d} \times [0, 1])$ for every $t \in [0, T]$. For $R>0$, let us estimate
\begin{align}
\label{e:9001}
\Law (X^{1, N}_{t}, \Lambda^{1, N}_{t}) ( (\R^{d} \setminus \overline{B}_{R}) \times [0, 1]) \leq \frac{ \E(\| (X^{1, N}_{t}, \Lambda^{1, N}_{t}) \| )}{R}\,.
\end{align}
In view of~\eqref{e:moments-finite}, $\Law (X^{1, N}_{t}, \Lambda^{1, N}_{t}) $ is tight in $\cP(\R^{d} \times [0, 1])$ for every $t \in [0, T]$.

As a second step, we fix $\delta_{0} >0$ and estimate for $\delta \in (0, \delta_{0})$ and $t \in [0, T-\delta]$
\begin{align*}
\E \big( \| (X^{1, N}_{t+\delta}, \Lambda^{1, N}_{t+\delta}) &  - (X^{1, N}_{t}, \Lambda^{1, N}_{t}) \|^{2} \big) 
\\
&
 \leq 2 \int_{t}^{t+\delta} \nu \,  \E( \|  v_{\rho^{N}_{s}} (X^{1, N}_{s}, \Lambda^{1, N}_{s})\| \, \| X^{1, N}_{s}\| ) + \E \big( | \cT_{\Sigma^{N}_{s}} (X^{1, N}_{s}, \Lambda^{1, N}_{s} ) | \, \Lambda^{1, N}_{s} \big) \, \di s 
\\
&
\quad + \varsigma^{2} d \int_{t}^{t+\delta}  \E \big(\| v_{\rho^{N}_{s}} ( X^{1, N}_{s}, \Lambda^{1, N}_{s})\|^{2} \big) \, \di s \,.
\end{align*}
By the assumptions~$(T1)$ and $(f2)$ we get that
\begin{align}
\label{e:translation estimate}
\E \big( \| (X^{1, N}_{t+\delta}, \Lambda^{1, N}_{t+\delta}) &   - (X^{1, N}_{t}, \Lambda^{1, N}_{t}) \|^{2} \big) 
\\
&
\leq C  \int_{t}^{t+\delta} \!\! \Big[ \E ( \| (X^{1, N}_{s}, \Lambda^{1, N}_{s}) \|^{2}) \Big]^{\frac{1}{2}} \bigg[1 + \Big( \E \Big( \sup_{j=1, \ldots, N}\, \| X^{1, N}_{s}\|^{2} \Big) \Big)^{\frac{1}{2}} \bigg] \di s  \nonumber
\end{align}
for some positive constant~$C$ independent of~$t$ and~$\delta$. In view of Proposition~\ref{p:well-posed-finite}, from \eqref{e:translation estimate} we infer that for every $\varepsilon, \eta >0$ there exists $\delta_{0}>0$ such that for every $t \in [0, T  - \delta_{0}]$
\begin{equation}
\label{e:9000}
\sup_{\delta \in (0, \delta_{0})} \, \mathbb{P} \Big( \|(X^{1, N}_{t+\delta}, \Lambda^{1, N}_{t+\delta})   - (X^{1, N}_{t}, \Lambda^{1, N}_{t}) \| \geq \eta \Big) < \varepsilon\,. 
\end{equation}
Inequalities~\eqref{e:9001} and \eqref{e:9000} yield the desired compactness of $\Law (\Sigma^{N})$. 

As for $\Law (\Sigma^{N}_{t})$, we notice that for any bounded and continuous map $\Gamma \colon \cP (\R^{d} \times [0, 1]) \to \R$, the map $\overline{\Gamma} \colon \cP (C([0, T]; \R^{d} \times [0, 1])) \to \R$ defined as $\overline{\Gamma} (\zeta) \coloneqq \Gamma(\zeta_{t})$ is bounded and continuous as well. Hence, we infer that  $\Law (\Sigma^{N}_{t})$ converges narrow to $\Law (\Sigma_{t})$ in $\cP (\cP( \R^{d} \times [0, 1]))$ for every $t \in [0, T]$. In view of the uniform bounds~\eqref{e:moments-finite}, we have that the sequence $\Law (\Sigma^{N}_{t})$ is also compact in $\cP_{2} (\cP_{2} (\R^{d} \times [0, 1]))$, since it holds
\begin{align*}
\int_{\cP_{2} (\R^{d} \times[0, 1])} \!\!\!\!\! \!\!\!\!\!\! W_{2}^{4} ( \zeta, \delta_{(0, 0)}) \, \di (\Sigma^{N}_{t})_{\#} \mathbb{P} (\zeta) & = \int_{\Om}  W_{2}^{4} ( \Sigma^{N}_{t}, \delta_{(0, 0)}) \, \di \mathbb{P}
\\
&
\leq  \int_{\Om} \frac{1}{N} \sum_{j=1}^{N} \| ( X^{j, N}_{t}, \Lambda^{j, N}_{t}) \|^{4} \, \di \mathbb{P} \leq \sup_{j=1, \ldots, N} \E \big( \| ( X^{j, N}_{t}, \Lambda^{j, N}_{t}) \|^{4} \big)\,. 
\end{align*}
Therefore, $\Law(\Sigma^{N}_{t})$ is also compact in $\cP_{2} (\cP_{2} (\R^{d}\times[0, 1]))$. This implies the last part of the thesis.
\end{proof}


For $N \in \mathbb{N}$ and $\varphi \in C^{2}_{b} (\R^{d} \times [0, 1])$ let us denote
\begin{align*}
G_{\varphi} (\Sigma^{N}) \coloneqq &  \int_{\R^{d} \times [0, 1] } \varphi (x, \lambda) \, \di \Sigma^{N}_{T} -  \int_{\R^{d} \times [0, 1]} \varphi(x, \lambda) \, \di \Sigma^{N}_{0} 
\\
&
- \int_{0}^{T} \int_{\R^{d} \times [0, 1] } \big( \nu \, v_{\rho^{N}_{t}} (x, \lambda), \cT_{\Sigma^{N}_{t}} (x, \lambda) \big)  \cdot \nabla \varphi (x, \lambda)  \, \di \Sigma^{N}_{t} \, \di t 
\\
&
+ \frac{\varsigma^{2}}{2} \int_{0}^{T}  \int_{\R^{d} \times [0, 1] } \| v_{\rho^{N}_{t}} (x, \lambda) \|^{2} \, \Delta_{x} \varphi_{x} (x, \lambda) \, \di \Sigma^{N}_{t} \, \di t \,.
\end{align*}
We will use a similar notation for $\Sigma$. The following holds.

\begin{lemma}
\label{l:Gvarphi}
Under the assumptions of Proposition~\ref{p:well-posed-finite}, for every $\varphi \in C^{2}_{b} (\R^{d} \times[0, 1])$ there exists a positive constant~$C = C( \| \nabla \varphi\|_{\infty}, \nu, \varsigma)$ such that for every $N \in \mathbb{N}$
\begin{equation}
\label{e:GvarphiN}
\E \big( | G_{\varphi} (\Sigma^{N}) |^{2} \big) \leq \frac{C}{N}\,. 
\end{equation}
\end{lemma}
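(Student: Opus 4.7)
The plan is to apply It\^o's formula to $\varphi(X^{i,N}_t,\Lambda^{i,N}_t)$ for each $i=1,\dots,N$, average over $i$, and recognize the deterministic drift/diffusion terms as the empirical-measure integrals appearing in $G_\varphi(\Sigma^N)$. Since $\Sigma^N_s=\tfrac{1}{N}\sum_j\delta_{(X^{j,N}_s,\Lambda^{j,N}_s)}$ and $\rho^N_s=(\pi_x)_\sharp\Sigma^N_s$, the bounded-variation terms in the It\^o expansion of $\tfrac{1}{N}\sum_i[\varphi(X^{i,N}_T,\Lambda^{i,N}_T)-\varphi(\widehat X^i_0,\widehat\Lambda^i_0)]$ reassemble exactly into
\[
\int_0^T\!\!\int_{\R^d\times[0,1]}(\nu\, v_{\rho^N_s},\cT_{\Sigma^N_s})\cdot\nabla\varphi\,\de\Sigma^N_s\,\de s+\tfrac{\varsigma^2}{2}\int_0^T\!\!\int_{\R^d\times[0,1]}\|v_{\rho^N_s}\|^2\Delta_x\varphi\,\de\Sigma^N_s\,\de s,
\]
so that (up to the sign convention adopted in the definition of $G_\varphi$) the quantity $G_\varphi(\Sigma^N)$ collapses to the pure martingale contribution
\[
G_\varphi(\Sigma^N)=\frac{\varsigma}{N}\sum_{i=1}^N M^i_T,\qquad M^i_T:=\int_0^T \nabla_x\varphi(X^{i,N}_s,\Lambda^{i,N}_s)\cdot\|v_{\rho^N_s}(X^{i,N}_s,\Lambda^{i,N}_s)\|\,\de B^i_s.
\]

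Next, I would exploit the independence of the $N$ driving Brownian motions $B^i$, which implies that the $M^i$ are pairwise $L^2$-orthogonal martingales, so $\E(M^i_T M^j_T)=0$ for $i\ne j$. Expanding the square then yields
\[
\E(|G_\varphi(\Sigma^N)|^2)=\frac{\varsigma^2}{N^2}\sum_{i=1}^N\E(|M^i_T|^2),
\]
and the improvement from $O(1)$ to $O(1/N)$ is immediate as soon as we have a uniform $O(1)$ bound on $\E(|M^i_T|^2)$.

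To close the argument, It\^o's isometry together with $\|\nabla_x\varphi\|_\infty<\infty$ gives
\[
\E(|M^i_T|^2)\leq \|\nabla_x\varphi\|_\infty^2\,\E\!\int_0^T\!\|v_{\rho^N_s}(X^{i,N}_s,\Lambda^{i,N}_s)\|^2\,\de s,
\]
and the linear growth condition $(w2)$ for $v$ established in Proposition~\ref{p:some-estimates}, combined with the uniform moment bound~\eqref{e:moments-finite} of Proposition~\ref{p:well-posed-finite}, shows that this expectation is bounded by a constant depending only on $T$, $\nu$, $\varsigma$, $M_f$ and $\E(\|\widehat X_0\|^2)$, and not on $i$ or $N$. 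Summing over $i$ and multiplying by $\varsigma^2/N^2$ then produces~\eqref{e:GvarphiN}.

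The essential step---and the only one that is not routine---is the orthogonality of the martingales $M^i$, which hinges crucially on the independence of the Brownian motions $B^i$ driving distinct particles; without it the double sum would not reduce to its diagonal and one would lose the $1/N$ rate. Identifying the drift/diffusion pieces of It\^o's formula with the test-function integrals in $G_\varphi(\Sigma^N)$ is a matter of careful bookkeeping, and the final moment estimate is a direct consequence of Propositions~\ref{p:some-estimates} and~\ref{p:well-posed-finite}.
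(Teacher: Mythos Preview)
Your proposal is correct and follows essentially the same route as the paper: apply It\^o's formula to each particle and average so that $G_\varphi(\Sigma^N)$ reduces to the Brownian-martingale contribution $\tfrac{\varsigma}{N}\sum_i M^i_T$, then use the orthogonality of the $M^i$ (from independence of the driving noises) together with It\^o's isometry and the uniform moment bound~\eqref{e:moments-finite} to obtain the $1/N$ rate. The paper carries this out more tersely, but the identification of $G_\varphi(\Sigma^N)$ with the martingale part and the subsequent isometry-plus-moment estimate are exactly the same.
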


\begin{proof}
By definition of~$G_{\varphi}$ and by It\^o formula, we have that
\begin{align*}
G_{\varphi} (\Sigma^{N}) = \int_{0}^{T} \frac{\varsigma}{N} \sum_{j=1}^{N} \| v_{\rho^{N}_{t}} (X^{j, N}_{t}, \Lambda^{j, N}_{t} ) \| \nabla_{x} \varphi (X^{j, N}_{t}, \Lambda^{j, N}_{t}) \, \di B_{t}\,.
\end{align*}
Taking the square and applying It\^o isometry (cf.~estimates~\eqref{e:moments-finite}), we have that
\begin{align*}
\E \big( | G_{\varphi} (\Sigma^{N})|^{2} \big) & = \frac{\varsigma^{2}}{N^{2}}  \E \bigg( \bigg| \int_{0}^{T} \sum_{j=1}^{N} \| v_{\rho^{N}_{t}} (X^{j, N}_{t}, \Lambda^{j, N}_{t} ) \| \nabla_{x} \varphi (X^{j, N}_{t}, \Lambda^{j, N}_{t}) \, \di B_{t} \bigg|^{2}\bigg) 
\\
&
= \frac{\varsigma^{2}}{N^{2}} \sum_{j=1}^{N} \E \bigg( \int_{0}^{T} \Bigg|  \| v_{\rho^{N}_{t}} (X^{j, N}_{t}, \Lambda^{j, N}_{t} ) \| \nabla_{x} \varphi (X^{j, N}_{t}, \Lambda^{j, N}_{t}) \Bigg|^{2} \, \di t \bigg)
\\
&
\leq  \frac{\varsigma^{2} \| \nabla \varphi\|^{2} }{N^{2}} \sum_{j=1}^{N}  \E \bigg( \int_{0}^{T}    \| v_{\rho^{N}_{t}} (X^{j, N}_{t}, \Lambda^{j, N}_{t} ) \|^{2}\, \di t \bigg) \leq \frac{C}{N}\,,
\end{align*}
where, in the last step, we have used the uniform bounds in Proposition~\ref{p:well-posed-finite}.
\end{proof}

We now prove that $t \mapsto \Sigma_{t}$ solves the PDE~\eqref{e:PDE-CBO} almost surely. First, we notice that by Lemma~\ref{l:particle-compactness} and by Skorokhod representation theorem~\cite[Section~6]{Billingsley}, up to redefining the probability space and the stochastic variable $\Sigma^{N}$, we may assume that $\Sigma^{N}$ converges narrowly to~$\Sigma$ in~$\cP(C([0, T]; \R^{d} \times [0, 1]))$ almost surely. A further application of Skorokhod representation theorem also allows us to assume that for every $t \in [0, T]$ the sequence $\Sigma^{N}_{t}$ converges to~$\Sigma_{t}$ in $(\cP_{2} (\R^{d} \times [0, 1]), W_{2})$ almost surely. Moreover, we still have that~\eqref{e:GvarphiN} holds. Finally, by~\eqref{e:moments-finite} we get
\begin{align}
\label{e:limit-moments}
\sup_{N \in \mathbb{N}} \, \sup_{t \in [0, T] } \, \E \bigg( m_{2}^{2} (\Sigma^{N}_{t}) + m_{4}^{4} (\Sigma^{N}_{t}) \bigg)  <+\infty\,.
\end{align}
By lower-semicontinuity, we infer from~\eqref{e:limit-moments} that
\begin{align}
\label{e:limit-moments-2} 
\sup_{t \in [0, T] } \, \E \bigg( m_{2}^{2} (\Sigma_{t}) + m_{4}^{4} (\Sigma_{t}) \bigg)  <+\infty\,.
\end{align}

\begin{proposition}
\label{p:limit-PDE-solved}
Under the assumptions of Proposition~\ref{p:well-posed-finite}, we have that for every $\varphi \in C^{2}_{b} ( \R^{d} \times [0, 1] ) $ it holds
\begin{displaymath}
\E ( | G_{\varphi} (\Sigma)| ) = 0 \,.
\end{displaymath}
In particular, the random variable $\Sigma$ is deterministic and is the unique solution to~\eqref{e:PDE-CBO} with initial condition~$\Sigma_{0}= \Law(\widehat{X}_{0}, \widehat{\Lambda}_{0})$.
\end{proposition}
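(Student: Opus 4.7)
The plan is to pass to the limit $N \to \infty$ in Lemma~\ref{l:Gvarphi}: once I show that $G_{\varphi}(\Sigma^{N}) \to G_{\varphi}(\Sigma)$ in $L^{1}(\Om)$, the estimate $\E(|G_{\varphi}(\Sigma^{N})|^{2}) \le C/N$ will force $\E(|G_{\varphi}(\Sigma)|) = 0$ for every $\varphi \in C^{2}_{b}(\R^{d}\times[0,1])$, and Proposition~\ref{p:PDE-formulation-CBO} will then promote $\Sigma$ to the (unique, hence deterministic) solution of~\eqref{e:PDE-CBO} with the prescribed initial datum.

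First I would exploit the two Skorokhod reductions made immediately before the statement: almost sure narrow convergence $\Sigma^{N} \to \Sigma$ in $\cP(C([0,T]; \R^{d}\times[0,1]))$ and, for every $t \in [0,T]$, almost sure $W_{2}$-convergence $\Sigma^{N}_{t} \to \Sigma_{t}$. The latter yields $\rho^{N}_{t} \to \rho_{t}$ in $W_{1}$ with uniformly bounded first moments, so assumption~$(f1)$ gives $f(\rho^{N}_{t}) \to f(\rho_{t})$, Remark~\ref{r:e-lip} gives $e(\rho^{N}_{t}) \to e(\rho_{t})$, and $(T1)$ provides uniform-on-compacts convergence of $\cT_{\Sigma^{N}_{t}}$ to $\cT_{\Sigma_{t}}$. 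Since $\nabla\varphi$ is bounded while $v_{\rho^{N}_{t}}$ has linear growth and $\|v_{\rho^{N}_{t}}\|^{2}$ has quadratic growth in $x$, the $W_{2}$-convergence is exactly what is needed to pass to the limit, pointwise in $t$, in both the drift integral $\int (\nu v_{\rho^{N}_{t}}, \cT_{\Sigma^{N}_{t}}) \cdot \nabla \varphi \, \di \Sigma^{N}_{t}$ and the diffusion integral $\int \|v_{\rho^{N}_{t}}\|^{2} \Delta_{x} \varphi \, \di \Sigma^{N}_{t}$.

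To upgrade the pointwise-in-$t$ convergence to convergence of the time integrals, I would invoke the uniform fourth-moment bounds of Proposition~\ref{p:well-posed-finite} and~\eqref{e:limit-moments}: they produce an $L^{1}(\Om\times[0,T])$ majorant for the integrands in $G_{\varphi}(\Sigma^{N})$ (via Cauchy--Schwarz), so dominated convergence delivers $G_{\varphi}(\Sigma^{N}) \to G_{\varphi}(\Sigma)$ almost surely and in $L^{1}(\Om)$. Combining with Lemma~\ref{l:Gvarphi} gives $G_{\varphi}(\Sigma) = 0$ almost surely for each fixed $\varphi \in C^{2}_{b}(\R^{d}\times[0,1])$. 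Selecting a countable dense family $\{\varphi_{k}\}$ in $C^{2}_{c}(\R^{d}\times[0,1])$ then produces a single full-probability event on which $\Sigma$ is a weak solution of~\eqref{e:PDE-CBO} with initial datum $\Sigma_{0} = \Law(\widehat{X}_{0}, \widehat{\Lambda}_{0})$, and Proposition~\ref{p:PDE-formulation-CBO} finally identifies $\Sigma$ with the unique deterministic solution.

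The main obstacle I anticipate lies in the diffusion term $\int \|v_{\rho^{N}_{t}}\|^{2} \Delta_{x} \varphi \, \di \Sigma^{N}_{t}$: its integrand has quadratic growth in $x$, so plain narrow convergence of $\Sigma^{N}_{t}$ is not enough to pass to the limit. The almost sure $W_{2}$-convergence for every fixed $t$ delivered by the second Skorokhod step, combined with the uniform control on the fourth moments coming from Proposition~\ref{p:well-posed-finite}, is precisely the ingredient that handles this quadratic term and simultaneously supplies the uniform-in-$t$ majoration required to apply dominated convergence in time.
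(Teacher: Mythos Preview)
Your overall strategy matches the paper's proof: split $\E(|G_{\varphi}(\Sigma)|) \le \E(|G_{\varphi}(\Sigma) - G_{\varphi}(\Sigma^{N})|) + \E(|G_{\varphi}(\Sigma^{N})|)$, kill the second term with Lemma~\ref{l:Gvarphi}, and show the first vanishes by combining the Skorokhod-upgraded almost sure $W_{2}$-convergence of $\Sigma^{N}_{t}$ with the uniform moment bounds~\eqref{e:limit-moments}--\eqref{e:limit-moments-2}; then invoke Proposition~\ref{p:PDE-formulation-CBO} for uniqueness. Your extra care with the countable family of test functions is a nice touch the paper leaves implicit.

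One point to tighten: the fourth-moment bounds do \emph{not} produce an $L^{1}(\Om\times[0,T])$ majorant for the integrands in $G_{\varphi}(\Sigma^{N})$ --- there is no common dominating function. What they give is a uniform $L^{2}(\Om)$ bound (e.g.\ $\sup_{N,t}\E[m_{2}^{4}(\Sigma^{N}_{t})] \le \sup_{N,t}\E[m_{4}^{4}(\Sigma^{N}_{t})]<\infty$ controls the quadratic diffusion integrand), hence uniform integrability, so the passage to the limit in $\Om$ is by Vitali's theorem rather than dominated convergence; the time integral can then be handled by dominated convergence since the resulting $t$-dependent expectations are bounded uniformly in $t$. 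This is exactly how the paper argues.
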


\begin{proof}
In view of Lemma~\ref{l:Gvarphi}, by triangle inequality it is enough to estimate $\E ( | G_{\varphi} (\Sigma) - G_{\varphi} (\Sigma^{N})|)$. In particular, we have that
\begin{align}
\label{e:9006}
 \E  (  | & G_{\varphi}  (\Sigma)  - G_{\varphi} (\Sigma^{N})|)  \leq \E \bigg( \bigg|   \int_{\R^{d} \times [0, 1] } \varphi (x, \lambda) \, \di \Sigma_{T} -   \int_{\R^{d} \times [0, 1] } \varphi (x, \lambda) \, \di \Sigma^{N}_{T} \bigg| \bigg) 
\\
&
 + \E \bigg( \bigg| \int_{\R^{d} \times [0, 1]} \varphi(x, \lambda) \, \di \Sigma_{0} - \int_{\R^{d} \times [0, 1]} \varphi(x, \lambda) \, \di \Sigma^{N}_{0}  \bigg| \bigg) \nonumber
\\
&
  + \E \bigg( \bigg|  \int_{0}^{T} \int_{\R^{d} \times [0, 1] } \Big[ \big( \nu \, v_{\rho_{t}} (x, \lambda), \cT_{\Sigma_{t}} (x, \lambda) \big) -  \big( \nu \, v_{\rho^{N}_{t}} (x, \lambda), \cT_{\Sigma^{N}_{t}} (x, \lambda) \big) \Big] \cdot \nabla \varphi (x, \lambda)  \, \di \Sigma^{N}_{t} \, \di t \bigg| \bigg) \nonumber
\\
&
+ \E \bigg( \bigg|  \int_{0}^{T} \int_{\R^{d} \times [0, 1] } \big( \nu \, v_{\rho_{t}} (x, \lambda), \cT_{\Sigma_{t}} (x, \lambda) \big)  \cdot \nabla \varphi (x, \lambda)  \, \di ( \Sigma_{t} - \Sigma^{N}_{t} ) \, \di t  \bigg| \bigg) \nonumber
\\
&
+ \frac{\varsigma^{2}}{2} \E \bigg( \bigg| \int_{0}^{T}  \int_{\R^{d} \times [0, 1] } \Big[ \| v_{\rho_{t}} (x, \lambda) \|^{2} - \| v_{\rho^{N}_{t}} (x, \lambda) \|^{2}\Big] \Delta_{x} \varphi_{x} (x, \lambda) \, \di \Sigma^{N}_{t} \, \di t \bigg| \bigg) \nonumber
\\
&
+ \frac{\varsigma^{2}}{2} \E \bigg( \bigg| \int_{0}^{T}  \int_{\R^{d} \times [0, 1] }  \| v_{\rho_{t}} (x, \lambda) \|^{2} \, \Delta_{x} \varphi_{x} (x, \lambda) \, \di ( \Sigma_{t} - \Sigma^{N}_{t} )  \, \di t \bigg| \bigg)  \,.\nonumber
\end{align}
We pass to the limit term by term in the previous inequality.

By the regularity of $\varphi$, we estimate 
\begin{align*}
\E \bigg( \bigg|   \int_{\R^{d} \times [0, 1] } \varphi (x, \lambda) \, \di (\Sigma_{T} - \Sigma^{N}_{T})  \bigg| \bigg) \leq \| \nabla \varphi\|_{\infty} \E \big( W_{1} (\Sigma^{N}_{T}, \Sigma_{T}) \big) \,.
\end{align*}
Combining the almost sure convergence of~$\Sigma^{N}_{T}$ to~$\Sigma_{T}$ and the estimates~\eqref{e:limit-moments}--\eqref{e:limit-moments-2} we get that $\E \big( W_{1} (\Sigma^{N}_{T}, \Sigma_{T}) \big)\to 0$ as $N \to \infty$. Thus,
\begin{align}
\label{e:8001}
\lim_{N \to \infty} \E \bigg( \bigg|   \int_{\R^{d} \times [0, 1] } \varphi (x, \lambda) \, \di (\Sigma_{T} - \Sigma^{N}_{T})  \bigg| \bigg) = 0\,.
\end{align}

In view of the structure of~$v$, we have that
\begin{align}
\label{e:9002}
\E \bigg( \bigg| &   \int_{0}^{T} \int_{\R^{d} \times [0, 1] } \Big[  \nu \, v_{\rho_{t}} (x, \lambda) -   \nu \, v_{\rho^{N}_{t}} (x, \lambda) \Big] \cdot \nabla_{x} \varphi (x, \lambda)  \, \di \Sigma^{N}_{t} \, \di t \bigg| \bigg) 
\\
&
\leq \nu\, \E \bigg(  \int_{0}^{T} \bigg|   \int_{\R^{d} \times [0, 1] } \Big[  \lambda ( f(\rho_{t} ) - f( \rho^{N}_{t} ) ) + ( 1 - \lambda ) (e ( \rho_{t } ) - e ( \rho^{N}_{t} ) )  \Big] \cdot \nabla_{x} \varphi (x, \lambda)  \, \di \Sigma^{N}_{t} \bigg|   \di t  \bigg)\nonumber
\\
&
\leq  \int_{0}^{T} \E \bigg(   \bigg| ( f(\rho_{t} ) - f( \rho^{N}_{t} ) )  \cdot  \int_{\R^{d} \times [0, 1] }   \lambda  \nabla_{x} \varphi (x, \lambda)  \, \di \Sigma^{N}_{t} \bigg|   \bigg)  \di t  \nonumber
\\
&
\quad + \nu \int_{0}^{T} \E \bigg( \bigg|  (e ( \rho_{t } ) - e ( \rho^{N}_{t} ) )  \cdot  \int_{\R^{d} \times [0, 1]} ( 1 - \lambda )\nabla_{x} \varphi (x, \lambda)  \, \di \Sigma^{N}_{t} \bigg|   \bigg)  \di t  \,. \nonumber
\end{align}
Let us estimate the first term on the right-hand side of~\eqref{e:9002}. By H\"older inequality we write
\begin{align*}
 \int_{0}^{T} \E \bigg( &   \bigg| ( f(\rho_{t} ) - f( \rho^{N}_{t} ) )  \cdot  \int_{\R^{d} \times [0, 1] }   \lambda  \nabla_{x} \varphi (x, \lambda)  \, \di \Sigma^{N}_{t} \bigg|    \bigg)  \di t
 \\
 &
 \leq \int_{0}^{T} \bigg[ \E \bigg( \|  f(\rho_{t} ) - f( \rho^{N}_{t}) \|^{2} \bigg)\bigg]^{\frac{1}{2}}  \bigg[ \E \bigg( \bigg\|   \int_{\R^{d} \times [0, 1] }   \lambda  \nabla_{x} \varphi (x, \lambda)  \, \di \Sigma^{N}_{t} \bigg\|^{2}\bigg)  \bigg]^{\frac{1}{2}} \di t 
 \\
 &
 \leq \| \nabla \varphi\|_{\infty} \int_{0}^{T} \bigg[ \E \bigg( \|  f(\rho_{t} ) - f( \rho^{N}_{t}) \|^{2} \bigg)\bigg]^{\frac{1}{2}} 
\end{align*}
We recall that $ \|  f(\rho_{t} ) - f( \rho^{N}_{t}) \|^{2} \to 0$ almost surely. In view of $(f2)$ and of the control on $m_{2}^{4} (\rho_{t})$ and $m_{2}^{4} (\rho^{N}_{t})$ in~\eqref{e:limit-moments}--\eqref{e:limit-moments-2}, by Vitali convergence theorem and dominated convergence  we conclude that
\begin{displaymath}
\lim_{N \to \infty} \int_{0}^{T} \E \big( \|  f(\rho_{t} ) - f( \rho^{N}_{t}) \|^{2} \big) \, \di t =  0\, ,
\end{displaymath} 
so that
\begin{align}
\label{e:8002}
\lim_{N \to \infty} \E \bigg(    \int_{0}^{T} \bigg| ( f(\rho_{t} ) - f( \rho^{N}_{t} ) )  \cdot  \int_{\R^{d} \times [0, 1] }   \lambda  \nabla_{x} \varphi (x, \lambda)  \, \di \Sigma^{N}_{t} \bigg|   \di t  \bigg)  = 0\,.
\end{align}
In a similar way we obtain that
\begin{align}
\label{e:8003}
\lim_{N \to \infty}  \E \bigg( \int_{0}^{T} \bigg|  (e ( \rho_{t } ) - e ( \rho^{N}_{t} ) )  \cdot  \int_{\R^{d} \times [0, 1]} ( 1 - \lambda )\nabla_{x} \varphi (x, \lambda)  \, \di \Sigma^{N}_{t} \bigg|   \di t  \bigg) = 0\,.
\end{align}
Combining~\eqref{e:9002}--\eqref{e:8003} we conclude that
\begin{align}
\label{e:8004}
\lim_{N \to \infty} \E \bigg( \bigg| &   \int_{0}^{T} \int_{\R^{d} \times [0, 1] } \Big[  \nu \, v_{\rho_{t}} (x, \lambda) -   \nu \, v_{\rho^{N}_{t}} (x, \lambda) \Big] \cdot \nabla_{x} \varphi (x, \lambda)  \, \di \Sigma^{N}_{t} \, \di t \bigg| \bigg) = 0\,.
\end{align}

In view of~$(T1)$, we estimate
\begin{align*}
\E  \bigg( \bigg|   \int_{0}^{T} \int_{\R^{d} \times [0, 1] } & \big( \cT_{\Sigma_{t}} (x, \lambda)  -   \cT_{\Sigma^{N}_{t}} (x, \lambda) \big) \cdot \nabla \varphi (x, \lambda)  \, \di \Sigma^{N}_{t} \, \di t \bigg| \bigg) 
\\
&
\leq \| \nabla \varphi \|_{\infty} \int_{0}^{T} \E \bigg( \int_{\R^{d} \times [0, 1]} \big | \cT_{\Sigma_{t}} (x, \lambda)  -   \cT_{\Sigma^{N}_{t}} (x, \lambda) \big|   \, \di \Sigma^{N}_{t} \bigg) \di t \nonumber
\\
&
\leq  L_{\cT}  \| \nabla \varphi \|_{\infty} \int_{0}^{T} \E \big( W_{1} (\Sigma^{N}_{t}, \Sigma_{t}) \big) \, \di t  \,. \nonumber
\end{align*}
As $W_{1} (\Sigma^{N}_{t}, \Sigma_{t}) \to 0$ almost surely, and by~\eqref{e:limit-moments}--\eqref{e:limit-moments-2}, we have that $\E\big( W_{1} (\Sigma^{N}_{t}, \Sigma_{t}) \big) \to 0$ as $N \to \infty$ for every $t \in [0, T]$. Using again~\eqref{e:limit-moments}--\eqref{e:limit-moments-2}, we conclude that
\begin{displaymath}
\lim_{N \to \infty} \int_{0}^{T} \E \big( W_{1} (\Sigma^{N}_{t}, \Sigma_{t}) \big) \, \di t = 0\,.
\end{displaymath}
This in turn implies that
\begin{align}
\label{e:8005}
\lim_{N \to \infty} \E  \bigg( \bigg|   \int_{0}^{T} \int_{\R^{d} \times [0, 1] } & \big( \cT_{\Sigma_{t}} (x, \lambda)  -   \cT_{\Sigma^{N}_{t}} (x, \lambda) \big) \cdot \nabla \varphi (x, \lambda)  \, \di \Sigma^{N}_{t} \, \di t \bigg| \bigg)  = 0\,.
\end{align}

For fixed $t \in [0, T]$, we notice that the map $(x, \lambda) \mapsto ( v_{\rho_{t}} (x, \lambda) , \cT_{\Sigma_{t}} (x, \lambda)) \cdot \nabla \varphi (x, \lambda)$ belongs to $C_{b} (\R^{d} \times [0, 1])$. Hence, by construction of $\Sigma^{N}_{t}$ and~$\Sigma_{t}$ we have that for every $t \in [0, T]$
\begin{align*}
 \lim_{N \to \infty} \int_{\R^{d} \times [0, 1] } \big( \nu \, v_{\rho_{t}} (x, \lambda), \cT_{\Sigma_{t}} (x, \lambda) \big)  \cdot \nabla \varphi (x, \lambda)  \, \di ( \Sigma_{t} - \Sigma^{N}_{t} )  = 0 \qquad \text{almost surely.}
\end{align*}
Thanks to~\eqref{e:limit-moments}--\eqref{e:limit-moments-2}, we are again in a position to apply Vitali convergence theorem and dominated convergence to conclude that
\begin{align}
\label{e:8006}
\lim_{N \to \infty} &  \E \bigg( \bigg|  \int_{0}^{T} \int_{\R^{d} \times [0, 1] } \big( \nu \, v_{\rho_{t}} (x, \lambda), \cT_{\Sigma_{t}} (x, \lambda) \big)  \cdot \nabla \varphi (x, \lambda)  \, \di ( \Sigma_{t} - \Sigma^{N}_{t} ) \, \di t  \bigg| \bigg) 
\\
&
\leq \lim_{N \to \infty} \int_{0}^{T} \E \bigg( \bigg| \int_{\R^{d} \times [0, 1]}  \big( \nu \, v_{\rho_{t}} (x, \lambda), \cT_{\Sigma_{t}} (x, \lambda) \big)  \cdot \nabla \varphi (x, \lambda)  \, \di ( \Sigma_{t} - \Sigma^{N}_{t} ) \bigg| \bigg) \di t  = 0\,. \nonumber
\end{align}
With very similar arguments, we can also prove that
\begin{align}
\label{e:8007} & \lim_{N \to \infty} \frac{\varsigma^{2}}{2} \int_{0}^{T}  \E \bigg( \bigg|  \int_{\R^{d} \times [0, 1] } \Big[ \| v_{\rho_{t}} (x, \lambda) \|^{2} - \| v_{\rho^{N}_{t}} (x, \lambda) \|^{2}\Big] \Delta_{x} \varphi_{x} (x, \lambda) \, \di \Sigma^{N}_{t} \bigg| \bigg)  \di t  = 0\,,
\\
&
\label{e:8008} \lim_{N \to \infty} \int_{0}^{T}  \E \bigg(  \bigg| \int_{\R^{d} \times [0, 1] }  \| v_{\rho_{t}} (x, \lambda) \|^{2} \, \Delta_{x} \varphi_{x} (x, \lambda) \, \di ( \Sigma_{t} - \Sigma^{N}_{t} )\bigg| \bigg)  \di t  = 0 \,.
\end{align}
Therefore, passing to the limit in~\eqref{e:9006} and using~\eqref{e:8001}--\eqref{e:8008} we obtain that
\begin{displaymath}
\lim_{N \to \infty} \, \E( | G_{\varphi} (\Sigma ) - G_{\varphi} (\Sigma^{N} )| ) = 0\,.
\end{displaymath}
In particular, this implies that $\E(| G_{\varphi} (\Sigma) |) = 0$ for every $\varphi \in C^{2}_{b} (\R^{d} \times [0, 1])$ and $\Sigma$ solves~\eqref{e:PDE-CBO} almost surely. By uniqueness of solutions (cf.~Proposition~\ref{p:PDE-formulation-CBO}) we conclude that $\Sigma$ is deterministic. 
This concludes the proof of the proposition.
\end{proof}

\section{Long-time behaviour and concentration}
\label{s:auxiliary}

This section is devoted to the study of the long-time behaviour of system~\eqref{e:system-SDE} featuring a velocity field 
\begin{displaymath}
v^{n}_{\mu} (x, \lambda) \coloneqq -x + \lambda f_{n }(\mu) + (1 - \lambda ) e (\mu) \qquad \text{for $\mu \in \cP_{1} (\R^{d})$, $(x, \lambda) \in \R^{d} \times [0, 1]$,}
\end{displaymath}
where $f_{n} \colon \cP_{1} (\R^{d}) \to \R^{d}$ a sequence of functions satisfying~$(f1)$--$(f2)$ for every $n \in \mathbb{N}$. In particular, we allow the Lipschitz constant~$L_{f_{n}, R}$ to depend on $n \in \mathbb{N}$, while we assume the constant~$M_{f_{n}}\geq 0$ in $(f2)$ to be uniform in $n$, i.e., we can choose $M_{f_{n}} = M\geq 0$ for every $n \in \mathbb{N}$. We further assume that
\begin{itemize}
\label{e:uniform-conf-hp}
\item[$(f3)$] For every function $\ell \colon (0, +\infty) \to (0, 1)$, the sequence $f_{n}$ converges to $0$ uniformly on the set 
\begin{displaymath}
E_{\ell, R}\coloneqq \{ \gamma \in \cP_{2} (\R^{d}): \, m_{2} (\gamma) \leq R, \,  \gamma (B_{r}) \geq \ell(r)>0\}.
\end{displaymath}
\end{itemize}
We further need to strengthen the assumption on the field $\cT$ at $\lambda=0$:
\begin{itemize}
\item[$(T3)$] $\cT_{\Psi}(x,0)>0$ 
for every $\Psi \in \mathcal{P} (\R^{d} \times [0, 1])$ and every $x \in \R^{d}$;
\end{itemize}
notice that assumption $(T2)$ only provides the large inequality, and that $\cT_{\Psi}(x,1)\leq0$, for every $\Psi \in \mathcal{P} (\R^{d} \times [0, 1])$ and every $x \in \R^{d}$.

\smallskip

In Section~\ref{s:convergence} we study the concentration properties of the sequence of systems (from now on, we set, without loss of generality, $\nu=1$)
\begin{equation}
\label{e:system-n}
\begin{cases}
\di X_{t} = v^{n}_{\rho_{t}} (X_{t}, \Lambda_{t}) \, \di t +  \varsigma \| v^{n}_{\rho_{t}} (X_{t}, \Lambda_{t}) \| \di B_{t}\,,\\[1mm]
\di \Lambda_{t} = \cT_{\Sigma_{t}}(X_{t},\Lambda_{t})\, \di t\,, \\[1mm] 
 \Sigma_{t} = {\rm Law} (X_{t}, \Lambda_{t}) \\
         (X_{0}, \Lambda_{0}) = (\widehat{X}_{0}, \widehat{\Lambda}_{0}),\\
         \rho_{t} = (\pi_x)_\sharp \Sigma_{t} = \Law(X_t)\,.
\end{cases}
\end{equation}
where $B \colon \Omega \to C([0,T],\R^d)$ is a $d$-dimensional standard Brownian motion defined on a filtered probability space $\bigl(\Omega, \sF, (\sF_t)_{t\in[0,T]},\prob\bigr)$, $\widehat{X}_0\colon\Omega\to\R^d$ an $\sF_0$-measurable random variable belonging to $L^4(\Omega,\sF,\prob)$, and $\widehat{\Lambda}_0\colon\Omega\to [0,1]$ an $\sF_0$-measurable random variable. Our analysis relies on the study of the auxiliary system
\begin{equation}
\label{e:system2}
\begin{cases}
\di X_{t} = (- X_{t} + ( 1 - \Lambda_{t}) e(\mu_{t}) ) ) \di t + \varsigma \| X_{t} - ( 1 - \Lambda_{t}) e (\mu_{t} ) \| \di B_{t}\,,\\[1mm]
\di \Lambda_{t} = \cT_{\Psi_{t}}(X_{t},\Lambda_{t})\, \di t\,, \\[1mm] 
 \Psi_{t} = {\rm Law} (X_{t}, \Lambda_{t}) \\
         (X_{0}, \Lambda_{0}) = (\widehat{X}_{0}, \widehat{\Lambda}_{0}),\\
         \mu_{t} = (\pi_x)_\sharp \Psi_{t} = \Law(X_t)\,.
\end{cases}
\end{equation} 
which we carry out in Section~\ref{s:auxiliary-SDE}. Notice that \eqref{e:system2} is of the form~\eqref{e:system-SDE} with $f=0$ in the velocity~$v$. Hence, we refer to Theorem~\ref{t:well-posedness} for the well-posedness and estimates on the second moment of solutions to both~\eqref{e:system-n} and~\eqref{e:system2}, depending on $\EE(|\widehat{X}_{0}|^{2})$.  The PDE formulation of~\eqref{e:system2} follows from Corollary~\ref{p:PDE-formulation-CBO}.

\subsection{Concentration of the auxiliary system}  
\label{s:auxiliary-SDE}

We study the concentration properties as $t\to +\infty$ of the solution to the SDE system~\eqref{e:system2}. First, we focus on the space dynamics. Hence, we consider the SDE equation
\begin{equation}
\label{e:eq-sigma}
\di X_{t} = (- X_{t} + ( 1 - \Lambda_{t}) e(\mu_{t}) ) \di t + \varsigma \| X_{t} - ( 1 - \Lambda_{t}) e(\mu_{t})  \| \di B_{t}
\end{equation} 
with initial condition $X_{0} = \widehat{X}_{0} \in L^{4} (\Om,\sF,\prob)$, for a given random variable~$\Lambda\colon \Om \to  C([0, +\infty); [0, 1])$. The following long time behaviour holds. Again, we denote $\mu_{t} = \Law (X_{t})$.

\begin{theorem}
\label{t:concentration}
Let $B \colon \Omega \to C([0,+\infty),\R^d)$ be a $d$-dimensional standard Brownian motion defined on a filtered probability space $\bigl(\Omega, \sF, (\sF_t)_{t\in[0,+\infty)},\prob\bigr)$, $\widehat{X}_0\colon\Omega\to\R^d$ an $\sF_0$-measurable random variable belonging to $L^4(\Omega,\sF,\prob)$, and let $t \mapsto \Lambda_{t}$ be an $(\sF_t)_t$-adapted continuous stochastic process in $C\bigl([0,+\infty),  [0,1]\bigr)$. Assume that $\varsigma^{2} d <2$ and let $t \mapsto X_{t}$ denote the unique solution to~\eqref{e:eq-sigma} with initial condition~$X_{0} = \widehat{X}_{0}$. 
Then the following statements hold true:
\begin{enumerate}
\item there exists a constant $C(\varsigma,d)>0$ such that $\EE( \|X_t \|^2)\leq C(\varsigma,d) \EE( \| \widehat{X}_{0} \|^{2})$;

\item if $\Lambda_{t}$ satisfies
\begin{equation}
\label{e:assumption-lambda}
\lim_{t \to +\infty} \int_{0}^{t} \EE(\Lambda_{s}) \, \di s = +\infty\,,
\end{equation} 
then we have that
\begin{equation}
\label{e:concentration-3000}
\lim_{t\to +\infty} \EE( \|X_{t} \|^{2}) = 0\,.
\end{equation}
\end{enumerate}
\end{theorem}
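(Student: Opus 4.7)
The plan is to exploit a mean--variance decomposition combined with It\^o's formula. Writing $m(t) \coloneqq e(\mu_{t}) = \EE(X_{t}) \in \R^{d}$, $Y_{t} \coloneqq X_{t} - m(t)$, and $a_{t} \coloneqq (1-\Lambda_{t})\,m(t)$, equation~\eqref{e:eq-sigma} takes the compact form $\de X_{t} = -(X_{t} - a_{t})\,\de t + \varsigma \|X_{t} - a_{t}\|\,\de B_{t}$, together with the useful algebraic identity $X_{t} - a_{t} = Y_{t} + \Lambda_{t}\, m(t)$. First I would track the mean: taking expectation in the integral form of the equation (the stochastic integral has zero mean thanks to the moment bound inherited from Theorem~\ref{t:well-posedness}) yields
\[
    m'(t) = -m(t) + \EE(a_{t}) = -\EE(\Lambda_{t})\, m(t),
\]
so that $\|m(t)\|^{2} = \|m(0)\|^{2} e^{-2M(t)}$ with $M(t) \coloneqq \int_{0}^{t} \EE(\Lambda_{s})\,\de s$. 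In particular $t \mapsto \|m(t)\|$ is non-increasing, and $\|m(t)\| \to 0$ as $t \to +\infty$ whenever~\eqref{e:assumption-lambda} is in force.

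Next I would apply It\^o's formula to $\psi(x) = \|x\|^{2}$. Since the diffusion coefficient is the scalar $\varsigma\|X_{t}-a_{t}\|$ multiplied by a $d$-dimensional Brownian motion, the trace contribution produces the factor~$\varsigma^{2}d$, and one obtains, setting $\phi(t) \coloneqq \EE(\|X_{t}\|^{2})$ and $V(t) \coloneqq \EE(\|Y_{t}\|^{2}) = \phi(t) - \|m(t)\|^{2}$,
\[
    \phi'(t) = -2\phi(t) + 2\EE(X_{t} \cdot a_{t}) + \varsigma^{2} d\, \EE(\|X_{t}-a_{t}\|^{2}).
\]
Expanding via $X_{t} = m(t)+Y_{t}$ and $X_{t}-a_{t} = Y_{t}+\Lambda_{t}m(t)$, and then subtracting $\tfrac{\de}{\de t}\|m(t)\|^{2} = -2\EE(\Lambda_{t})\|m(t)\|^{2}$, a direct (and somewhat patient) computation shows that the $Y_{t}$-linear cross terms with deterministic coefficient cancel, leaving
\[
    V'(t) = (\varsigma^{2} d - 2)\,V(t) + \varsigma^{2} d\, \EE(\Lambda_{t}^{2})\,\|m(t)\|^{2} + 2(\varsigma^{2} d - 1)\, m(t) \cdot \EE(\Lambda_{t} Y_{t}).
\]
Bounding $\EE(\Lambda_{t}^{2})\leq 1$, using the Cauchy--Schwarz estimate $|m(t)\cdot \EE(\Lambda_{t}Y_{t})| \leq \|m(t)\|\sqrt{V(t)}$, and applying Young's inequality with a small parameter $\delta \in (0,\, 2-\varsigma^{2}d)$, the assumption $\varsigma^{2}d < 2$ yields the scalar linear inhomogeneous inequality
\[
    V'(t) + \alpha V(t) \leq C\,\|m(t)\|^{2}, \qquad \alpha \coloneqq 2-\varsigma^{2}d-\delta > 0, \quad C = C(\varsigma,d,\delta).
\]

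Both conclusions now follow. For (1), since $\|m(t)\|^{2} \leq \|m(0)\|^{2} \leq \EE(\|\widehat{X}_{0}\|^{2})$, comparison with a constant supersolution yields $V(t) \leq V(0) + (C/\alpha)\EE(\|\widehat{X}_{0}\|^{2})$ uniformly in $t$, and hence $\phi(t) = V(t) + \|m(t)\|^{2} \leq C(\varsigma,d)\,\EE(\|\widehat{X}_{0}\|^{2})$. For (2), Duhamel's formula on the linear inequality gives
\[
    V(t) \leq V(0)\,e^{-\alpha t} + C \int_{0}^{t} e^{-\alpha(t-s)}\|m(s)\|^{2}\,\de s,
\]
and, since $\|m(s)\|^{2}\to 0$ and the convolution kernel has total mass at most $1/\alpha$, a standard splitting of the integral at a large $s_{0}$ beyond which $\|m(s)\|^{2}$ is already small produces $V(t)\to 0$; combined with $\|m(t)\|^{2}\to 0$, this gives $\phi(t)\to 0$. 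I expect the main obstacle to be the choice of the right quantity to track: a direct Gr\"onwall estimate on $\phi(t)$ produces cross terms whose Cauchy--Schwarz control is too lossy to reach the sharp threshold $\varsigma^{2}d < 2$, whereas the mean--variance split isolates a single good sign and reduces the long-time analysis to a scalar linear inhomogeneous inequality driven by the already-known exponential decay of $\|m(t)\|$.
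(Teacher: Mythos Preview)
Your proof is correct. Both arguments begin identically, deriving $m'(t)=-\EE(\Lambda_t)m(t)$ and hence the monotone decay of $\|m(t)\|$, and both then apply It\^o's formula to $\|X_t\|^2$. The divergence is in what quantity is tracked afterwards: the paper works directly with $\phi(t)=\EE(\|X_t\|^2)$, keeps all terms, and uses a weighted Young inequality (with a carefully chosen parameter $\eta$) to absorb the cross term $\EE[(1-\Lambda_s)X_s]\cdot\EE(X_s)$ into the dissipative part, obtaining first $\phi'(t)\le -(1-\varsigma^2 d/2)\phi(t)+\text{const}\cdot\|m(0)\|^2$ for part~(1) and then, after exploiting $\|m(t)\|<\varepsilon$, a sharper inequality with rate $2-\varsigma^2 d$ for part~(2). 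You instead subtract off the mean first and track the variance $V(t)=\phi(t)-\|m(t)\|^2$, which kills the deterministic-coefficient cross terms and leaves a single scalar inequality $V'+\alpha V\le C\|m(t)\|^2$ whose forcing is already known to decay. Your route is a bit cleaner algebraically and unifies parts (1) and (2) into one Duhamel argument; the paper's direct route shows that your closing worry---that a Gr\"onwall estimate on $\phi$ itself would be ``too lossy'' to reach the threshold $\varsigma^2 d<2$---is in fact unfounded, since with the right Young weight it goes through as well.
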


\begin{proof}
By Theorem~\ref{t:well-posedness}, $X_{t}$ satisfies 
\begin{equation}
\label{e:ooo}
\EE \biggl[ \sup_{t \in [0, T]}  \|X_{t} \|^{4} \biggr]  <+\infty\qquad \text{for every $T \in [0, +\infty)$.}
\end{equation}

By taking expectations in \eqref{e:eq-sigma} and using \eqref{e:ooo}, we deduce the equation
\begin{equation}\label{e:eq-medie}
\frac{\de}{\de t}\EE(X_{t}) = -\EE(\Lambda_{t})\EE(X_{t})\qquad\text{with initial condition}\quad \EE(X_{0})=\EE(\widehat{X}_{0}),
\end{equation}
whose solution is 
\begin{equation}\label{e:eq-medie-sol}
\EE( X_{t} ) = \EE(\widehat{X}_{0}) \, \exp\bigg\{-\int_{0}^{t} \EE(\Lambda_{s})\,\de s\bigg\}.
\end{equation}
It is immediate to see that 
\begin{equation}\label{e:eq-medie-sol-bdd}
\| \EE(X_{t}) \| \leq \| \EE(\widehat{X}_{0}) \| \qquad\text{for every $t\in[0,+\infty)$.}
\end{equation}

We apply It\^o formula~\cite[Theorem~4.2.1]{Oksendal} to~\eqref{e:eq-sigma} with $\psi(x)  = \|x\|^{2}$, which belongs to $C^{2}(\R^{d})$, and we get, for every $t\in[0,+\infty)$ and for every $h>0$,
\begin{align}
\label{e:conc1}
\| X_{t+h} \|^{2} = & \  \| X_{t} \|^{2} - 2 \int_{t}^{t+h} \big[ \|X_{s} \|^{2} - ( 1 - \Lambda_{s}) X_{s} \cdot e (\mu_{s}) \big] \, \di s 
\\
&
+ \varsigma^{2} d \int_{t}^{t+h} \| X_{s} - (1 - \Lambda_{s}) e(\mu_{s}) \|^{2} \, \di s 
+2 \varsigma  \int_{t}^{t+h} \big[ \| X_{s} \|^{2} - ( 1 - \Lambda_{s}) X_{s} \cdot e(\mu_{s}) \big] \, \di B_{s}\,. \nonumber
\end{align}

Averaging~\eqref{e:conc1} over~$\Om$, thanks to~\eqref{e:ooo} we infer that
\begin{align}
\label{e:conc2}
\EE( \|X_{t+h} \|^{2})  =&\,  \EE( \|{X}_{t} \|^{2} ) - 2\int_{t}^{t+h} \big[ \EE( \|X_{s} \|^{2} ) - \EE \big( ( 1 - \Lambda_{s}) X_{s}\big) \cdot \EE(X_{s}) \big] \, \di s 
\\
&
\quad + \varsigma^{2} d \int_{t}^{t+h} \EE ( \| X_{s} - (1 - \Lambda_{s}) \EE(X_{s}) \|^{2} )\, \di s  \nonumber
\\
=&\,
\EE( \|{X}_{t} \|^{2} ) - 2\int_{t}^{t+h} \big[ \EE( \|X_{s} \|^{2} ) - \EE \big( ( 1 - \Lambda_{s}) X_{s}\big) \cdot \EE(X_{s}) \big] \, \di s  \nonumber
\\
&
\quad+ \varsigma^{2} d \int_{t}^{t+h} \big[ \EE ( \| X_{s} \|^{2}) + \EE \big( (1 - \Lambda_{s})^{2}\big)  \|\EE(X_{s}) \|^{2} \big] \, \di s \nonumber
\\
&
\quad -2 \varsigma^{2} d \int_{t}^{t+h}  \EE \big(  (1 - \Lambda_{s}) X_{s} \big) \cdot \EE(X_{s})  \, \di s  \nonumber
\\
=&\,
\EE( \|{X}_{t} \|^{2} ) - 2 \bigg(1 - \frac{\varsigma^{2} d}{2} \bigg) \int_{t}^{t+h} \EE( \|X_{s} \|^{2})\, \di s \nonumber
\\
&
\quad +  2 (1 - \varsigma^{2} d ) \int_{t}^{t+h}  \EE \big( (1 - \Lambda_{s}) X_{s}\big) \cdot \EE(X_{s}) \, \di s\nonumber
\\
&
\quad + \varsigma^{2} d \int_{t}^{t+h} \EE \big(  (1 - \Lambda_{s})^{2} \big) \|\EE(X_{s}) \|^{2}\, \di s \nonumber\,.
\end{align}
We estimate the second to last term on the right-hand side of~\eqref{e:conc2}. By using \eqref{e:eq-medie-sol-bdd}, Jensen's and weighted Young's inequalities, and the fact that $\Lambda_{s} \in [0, 1]$, we get that for every $\eta>0$
\begin{align*}
\bigg\| \int_{t}^{t+h}  \EE \big( (1 - \Lambda_{s}) X_{s}\big) \cdot \EE(X_{s}) \, \di s \bigg \| & \leq   \int_{t}^{t+h}  \big\| \EE \big( (1 - \Lambda_{s}) X_{s} \big)\big\| \, \| \EE(X_{s}) \|  \, \di s \leq
\int_{t}^{t+h} \EE( \| X_{s} \|) \,  \big\| \EE(\overline{X}_{0}) \big\| \, \di s 
\\
&
\leq
\int_{t}^{t+h}  \bigg[\frac{\eta}{2} \, \EE(\|X_{s} \|^2) + \frac{1}{2\eta}\,  \big\| \EE(\overline{X}_{0}) \big\|^2\bigg] \, \di s\,.
\end{align*}
By choosing $\eta= (1-\frac{\varsigma^{2} d}{2} ) /2|1- \varsigma^{2} d|$, we can  continue in \eqref{e:conc2} with
\begin{align}
\label{e:conc100}
\EE( \|X_{t+h}\|^{2})  
\leq&\,
\EE( \|{X}_{t} \|^{2} ) - \bigg( 1  -  \frac{\varsigma^{2} d}{2} \bigg) \int_{t}^{t+h} \EE( \|X_{s} \|^{2})\, \di s + h \bigg(\frac{|1 -  \varsigma^{2} d | }{1- \frac{\varsigma^{2} d}{2} } + \varsigma^{2} d\bigg) \big \|\EE(\overline{X}_{0})\big\|^{2}\,. \nonumber
\end{align}
By dividing by $h$ and taking the limit as $h\to0^+$, we deduce that 
\begin{equation}\label{e:dividing-by-h}
\frac{\de}{\de t}\EE( \|X_{t} \|^{2}) \leq - \bigg( 1  -  \frac{\varsigma^{2} d}{2} \bigg) \EE(\| X_{t} \|^{2}) + \bigg(\frac{|1 -  \varsigma^{2} d| }{1- \frac{\varsigma^{2} d}{2}} + \varsigma^{2} d\bigg) \big\| \EE ( \widehat{X}_{0} )\big\|^{2}.
\end{equation}
By Gr\"onwall's Lemma, we obtain the estimate
\begin{equation}\label{e:estimate}
\begin{split}
\EE( \|X_{t} \|^{2}) \leq&\, \EE( \|\widehat{X}_{0} \|^{2}) \bigg[e^{-(1- \frac{\varsigma^{2} d}{2})t} + \frac{ 4 |1- \varsigma^{2} d |+ 2\varsigma^{2} d( 2 -\varsigma^{2} d ) }{( 2 -\varsigma^{2} d )^2}\big(1-e^{-(1- \frac{\varsigma^{2} d}{2} )t}\big)\bigg] 
\\
\leq&\, \EE( \|\widehat{X}_{0} \|^{2}) \bigg[1+ \frac{ 4 |1- \varsigma^{2} d |+ 2\varsigma^{2} d( 2 -\varsigma^{2} d ) }{( 2 -\varsigma^{2} d )^2} \bigg] \eqqcolon C(\varsigma,d) \EE( \|\widehat{X}_{0} \|^{2}),
\end{split}
\end{equation}
for every $t\in[0,+\infty)$, which concludes the proof of (1).

To prove (2), we first notice that \eqref{e:assumption-lambda} and \eqref{e:eq-medie-sol} imply that 
$ \|\EE(X_{t}) \|\to0$ as $t\to+\infty$, so that, for every $\eps>0$ there exists $t_0\in[0,+\infty)$ such that 
\begin{equation}\label{e:controllo}
\|\EE(X_{t}) \| < \eps\qquad\text{for every $t>t_0$\,.}
\end{equation}
We can now estimate of the last two integrals in \eqref{e:conc2}, for $t>t_0$ and $\eps\in(0,1)$, by
\begin{align}\label{e:better-estimates}
2(1 - \varsigma^{2} d) & \int_{t}^{t+h}  \EE \big( (1 - \Lambda_{s}) X_{s}\big) \cdot \EE(X_{s}) \, \di s  + \varsigma^{2} d \int_{t}^{t+h} \EE \big(  (1 - \Lambda_{s})^{2} \big) \| \EE(X_{s}) \|^{2}\, \di s 
\\
&
\leq
2|1-\varsigma^{2} d| \int_{t}^{t+h} \big \| \EE \big( (1 - \Lambda_{s}) X_{s}\big) \big\| \, \| \EE(X_{s}) \| \,\de s + \varsigma^{2} d \eps h  \nonumber
\\
&
\leq 2|1- \varsigma^{2} d| \eps \int_{t}^{t+h} \EE(\| X_{s} \| )\,\de s + \varsigma^{2} d\eps h \nonumber
\\
&
\leq  2|1- \varsigma^{2} d| \eps \int_{t}^{t+h} \big[\EE( \| X_{s} \|^2)\big]^{1/2}\,\de s + \varsigma^{2} d\eps h  \nonumber
\\
& 
\leq 2\eps h\Big( |1-\varsigma^{2} d| \sqrt{C(\varsigma ,d) \EE( \|\widehat{X}_{0} \|^{2})} + \varsigma^{2} d\Big). \nonumber
\end{align}
By combining \eqref{e:conc2} and \eqref{e:better-estimates}, we obtain
\begin{equation}\label{e:derivata2}
\frac{\de}{\de t} \EE( \| X_t \|^2) \leq - (2 - \varsigma^{2} d )  \EE( \| X_t \|^2) + 2\eps \Big( |1-\varsigma^{2} d| \sqrt{C(\varsigma,d) \EE( \| \widehat{X}_{0} \|^{2})} + \varsigma^{2} d\Big), 
\end{equation}
from which, applying Gronwall's Lemma once more, we get
\begin{equation}\label{e:Gronwall-once-more}
\EE( \| X_{t} \|^2) \leq \EE( \| \widehat{X}_{0} \|^{2}) e^{-( 2 - \varsigma^{2} d)t} + 2 \eps\frac{|1- \varsigma^{2} d | \sqrt{C(\varsigma ,d) \EE( \| \widehat{X}_{0} \|^{2})} + \varsigma^{2} d}{ 2 -\varsigma^{2} d} \big(1-e^{-( 2 - \varsigma^{2} d)t}\big),
\end{equation}
for every $t>t_0$.
By taking the $\limsup$ as $t\to+\infty$, we have
$$\limsup_{t\to+\infty} \EE( \| X_{t} \|^2) \leq 2 \eps \, \frac{| 1- \varsigma^{2}  d | \sqrt{C(\varsigma,d) \EE( \| \widehat{X}_{0} \|^{2})} + \varsigma^{2} d}{2 - \varsigma^{2} d}, $$
which yields \eqref{e:concentration-3000}, since $\eps$ is arbitrary. This concludes the proof.
%
%
\end{proof}

In the remaining part of this section, we analyze the complete SDE system~\eqref{e:system2}. The main concentration result is contained in the next theorem.


\begin{theorem}
\label{t:concentration2}
Assume that  $(T1)$--$(T3)$ hold. Let $B \colon \Omega \to C([0,+\infty),\R^d)$ be a $d$-dimensional standard Brownian motion defined on a filtered probability space $\bigl(\Omega, \sF, (\sF_t)_{t\in[0,+\infty)},\prob\bigr)$, $\widehat{X}_0\colon\Omega\to\R^d$ an $\sF_0$-measurable random variable belonging to $L^4(\Omega,\sF,\prob)$, and $\widehat{\Lambda}_0\colon\Omega\to [0,1]$ an $\sF_0$-measurable random variable belonging to $L^1(\Omega,\sF,\prob)$. Let $Y = (X, \Lambda)\colon \Omega \to C\bigl([0,T],\R^d\times [0,1]\bigr)$ be the $(\sF_t)_t$-adapted continuous stochastic process solution to~\eqref{e:system2}. If $\varsigma^{2} d < 2$, then 
\begin{equation}
\label{e:concentration2}
\lim_{t\to + \infty} \EE( \|X_{t} \|^{2}) = 0\,.
\end{equation}
\end{theorem}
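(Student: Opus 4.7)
The strategy is to reduce the claim to Theorem~\ref{t:concentration}(2) applied to the first line of~\eqref{e:system2}, which is exactly equation~\eqref{e:eq-sigma} for the $[0,1]$-valued adapted process $\Lambda$ furnished by the second equation of~\eqref{e:system2}. Everything thus boils down to verifying
\begin{equation*}
\int_{0}^{+\infty} \EE(\Lambda_{s}) \, \di s \,=\, +\infty\,,
\end{equation*}
which I intend to derive from the stronger conclusion $\liminf_{t \to +\infty} \EE(\Lambda_{t}) > 0$.

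Since $\Lambda$ has no diffusive part, pathwise $\dot{\Lambda}_t = \cT_{\Psi_t}(X_t,\Lambda_t)$. Invoking~$(T1)$ with second $\lambda$-entry equal to $0$, and passing to expectation (justified by the bound $|\cT| \leq 1/\theta$ from~$(T2)$), yields the differential inequality
\begin{equation*}
\tfrac{\de}{\de t}\EE(\Lambda_{t}) \,\geq\, r(t) - L_{\cT}\,\EE(\Lambda_{t})\,, \qquad r(t) \coloneqq \EE\bigl(\cT_{\Psi_{t}}(X_{t},0)\bigr)\,.
\end{equation*}
A standard integrating-factor computation then gives $\liminf_{t \to +\infty} \EE(\Lambda_{t}) \geq \bigl(\inf_{t \geq 0} r(t)\bigr)/L_{\cT}$, so the whole proof reduces to establishing the uniform lower bound $\inf_{t \geq 0} r(t) > 0$.

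The main obstacle is precisely the passage from the pointwise strict positivity granted by~$(T3)$ to a uniform one, and I plan to argue by contradiction. Since for every fixed $t$ the value $r(t) = \int_{\R^d}\cT_{\Psi_t}(x,0)\,\di\rho_t(x)$ is strictly positive by~$(T3)$, and $t \mapsto r(t)$ is continuous (thanks to pathwise continuity of $(\Psi_t, X_t)$, the Lipschitz bound $(T1)$, and dominated convergence via $|\cT| \leq 1/\theta$), a vanishing infimum would force a sequence $t_n \to +\infty$ with $r(t_n) \to 0$. Under the standing assumption $\varsigma^{2}d < 2$, Theorem~\ref{t:concentration}(1) gives $\sup_{t\geq 0}\EE(\|X_t\|^{2}) < +\infty$, hence also $\sup_{t\geq 0} m_2(\Psi_t) < +\infty$ since $\Lambda_t \in [0,1]$. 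This forces $\{\Psi_t\}_{t\geq 0}$ into a relatively compact subset of $(\cP_1(\R^d\times[0,1]), W_1)$, so along a subsequence $\Psi_{t_{n_k}} \to \Psi_*$ in $W_1$, with $x$-marginals $\rho_{t_{n_k}} \to \mu_* \coloneqq (\pi_x)_{\#}\Psi_*$.

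At this point, the Lipschitz bound~$(T1)$ in the measure variable yields
\begin{equation*}
\| \cT_{\Psi_{t_{n_k}}}(\cdot,0) - \cT_{\Psi_*}(\cdot,0) \|_{\infty} \,\leq\, L_{\cT}\, W_{1}(\Psi_{t_{n_k}},\Psi_*) \,\longrightarrow\, 0\,,
\end{equation*}
while $\cT_{\Psi_*}(\cdot,0)$ is bounded by $(T2)$ and $x$-Lipschitz by $(T1)$, so the narrow convergence $\rho_{t_{n_k}} \to \mu_*$ gives
\begin{equation*}
r(t_{n_k}) \,\longrightarrow\, \int_{\R^d} \cT_{\Psi_*}(x,0)\, \di\mu_*(x) \,>\, 0\,,
\end{equation*}
the strict positivity being a direct consequence of~$(T3)$ (the integrand is everywhere positive and $\mu_*$ is a probability measure). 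This contradicts $r(t_{n_k}) \to 0$, establishing $\inf_{t \geq 0} r(t) > 0$; combining with Theorem~\ref{t:concentration}(2) then closes the proof.
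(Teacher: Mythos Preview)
Your proof is correct and follows the same overall strategy as the paper: both reduce the claim to Theorem~\ref{t:concentration}(2) by verifying~\eqref{e:assumption-lambda}, and both do so by deriving the differential inequality $\tfrac{\de}{\de t}\EE(\Lambda_t) \geq -L_\cT\,\EE(\Lambda_t) + (\text{positive term})$ from~$(T1)$ and then using compactness together with~$(T3)$ to make the positive term uniformly bounded away from zero. The difference lies in one technical step. The paper first localises via Chebyshev to a ball $\{\|x\|\leq C\}$ carrying at least half the mass of every $\rho_t$, and then takes the infimum of $\cT_\Sigma(x,0)$ over the \emph{trajectory-independent} compact set $\{\|x\|\leq C\}\times\{m_2(\Sigma)\leq R\}\subset \R^d\times\cP_1(\R^d\times[0,1])$. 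You instead exploit the relative $W_1$-compactness of the \emph{trajectory} $\{\Psi_t\}_{t\geq 0}$ itself (guaranteed by the uniform second-moment bound from Theorem~\ref{t:concentration}(1)) and argue by contradiction that $r(t)=\int\cT_{\Psi_t}(x,0)\,\di\rho_t$ cannot approach zero. Your route is a bit cleaner (no Chebyshev split, no factor $\tfrac12$), while the paper's yields a more explicit lower bound depending on the initial data only through $R$. Both then close with an ODE comparison---the paper via a barrier/first-hitting-time argument, you via an integrating factor---which are equivalent here.
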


\begin{proof}
Under the above assumptions, we have shown in Theorem~\ref{t:concentration} that $t \mapsto \EE( \| X_{2} \|^{2})$ is bounded, using only the fact that $\Lambda_{t} \in [0, 1]$, namely, 
\begin{equation}
\label{e:first-inequality-mean}
\EE( \| X_{t} \|^{2}) \leq C(\varsigma ,d)\EE(\| \widehat{X}_{0} \|^{2}) <+\infty \qquad \text{for every $t \in [0, +\infty)$.}
\end{equation}
In particular, setting $m_2(\Sigma) \coloneqq \int_{\R^{d}\times[0,1]} (|x|^2+\lambda^2)\,\de \Sigma(x,\lambda)$, for $\Sigma\in \cP(\R^{d}\times[0,1])$, we deduce that 
\begin{equation}\label{e:gli-possiamo-dare-un-nome}
m_2(\Psi_{t}) \leq C(\varsigma,d)\EE( \|\widehat{X}_0 \|^2)+1 \eqqcolon R.
\end{equation}

By the definition of~$\Psi_{t}$ and by Chebyschev inequality, for every $C>0$ and every $t \in [0, +\infty)$ it holds that
\begin{align}
\label{e:cheb}
\Psi_{t} (\{ (x, \lambda) \in \R^{d} \times [0, 1]: \| x \| >C\}) &  =  \mathbb{P} ( \{ \omega \in \Om: \, \| X_{t} \| >C\})
\\
&
\nonumber
\leq
\frac{\EE( \| X_{t} \|^{2})}{C^{2}} \leq \frac{C(\varsigma,d)\EE(\| \widehat{X}_{0} \|^{2})}{C^{2}}\,.
\end{align}
Choosing $C>0$ so that $ \frac{C(\varsigma,d)\EE( \| \widehat{X}_{0} \|^{2})}{C^{2}}<\frac{1}{2}$, \eqref{e:cheb} implies that for every $t \in [0, +\infty)$ we have
\begin{align}
\label{e:cheb2}
\Psi_{t} (\{ (x, \lambda) \in \R^{d} \times [0, 1]: \| x \| \leq C\}) > \frac{1}{2}\,.
\end{align}
Taking the average~$\EE(\Lambda_{t})$ in~\eqref{e:system2} and using, in order, assumptions~$(T1)$ and $(T3)$, \eqref{e:cheb2}, and \eqref{e:gli-possiamo-dare-un-nome}, for $t \in (0, +\infty)$ we have that 
\begin{align}\label{e:2.44}
\frac{\di}{\di t}  \EE(\Lambda_{t}) &  =  \EE(\cT_{\Psi_{t}}(X_{t},\Lambda_{t})) = \int_{\Omega} \cT_{\Psi_{t}}(X_{t},\Lambda_{t}) \, \di \mathbb{P}(\omega) = \int_{\R^{d} \times [0, 1]} \cT_{\Psi_{t}}(x,\lambda)  \, \di \Psi_{t} (x, \lambda)
\\
&=
\int_{\R^{d} \times [0, 1]} (\cT_{\Psi_{t}} (x,\lambda) - \cT_{\Psi_{t}}(x,0))\, \di \Psi_{t} (x, \lambda) + \int_{\R^{d} \times [0, 1]}  \cT_{\Psi_{t}}(x,0)  \, \di \Psi_{t} (x, \lambda) \nonumber
\\
&\geq
- L_{\cT} \int_{\R^{d}\times[0,1]} \lambda\, \de\Psi_{t}(x,\lambda) + \int_{\{(x,\lambda)\in\R^{d}\times[0,1]: \| x \|\leq C\}} \cT_{\Psi_{t}}(x,0)  \, \di \Psi_{t} (x, \lambda) \nonumber
\\
&\geq
- L_{\cT}\,  \EE(\Lambda_{t}) + \frac{1}{2} \inf \big\{\cT_{\Psi_{t}}(x,0): \| x \| \leq C, t\in[0,+\infty) \big\} \,, \nonumber 
\\
&\geq
- L_{\cT}\,  \EE(\Lambda_{t}) + \frac{1}{2} \inf \big\{\cT_{\Sigma}(x,0): \| x \| \leq C, \Sigma\in\cP_2(\R^{d}\times[0,1]) \text{ with } m_2(\Sigma)\leq R \big\} \,. \nonumber 
\end{align}
We notice that the Lipschitz assumption $(T1)$ implies that 
$$ I\coloneqq \inf \big\{\cT_{\Sigma}(x,0): \| x \| \leq C, \Sigma\in\cP_2(\R^{d}\times[0,1]) \text{ with } m_2(\Sigma)\leq R \big\} >0$$
as the set $\{\| x \| \leq C, \Sigma\in\cP_2(\R^{d}\times[0,1]) \text{ with } m_2(\Sigma)\leq R\}$ is compact in $\R^{d}\times \cP_1(\R^{d}\times[0,1])$.

Hence, we may fix $\delta > 0$ such that $\delta < \min \{ \EE(\widehat{\Lambda}_{0}), \frac{I}{2L_{\cT}}\}$.
We claim that for $t \in [0, +\infty)$ it must be $\EE(\Lambda_{t})> \delta$. By contradiction, assume that such condition is not satisfied. Then, we define $\overline{t} \in (0,+\infty)$ as
\begin{displaymath}
\overline{t}:= {\rm argmin}\{ t \in [0, +\infty): \, \EE(\Lambda_{t}) \leq \delta\}\,.
\end{displaymath}
We notice that it must be $\frac{\di}{\di t} \EE(\Lambda_{\overline{t}}) \leq 0$ and $\EE(\Lambda_{\overline{t}}) = {\delta}$. By~\eqref{e:2.44} and by our choice of~$\delta$ we have that
\begin{displaymath}
\frac{\di}{\di t} \EE(\Lambda_{\overline{t}}) \geq -L_{\cT} \, \delta+\frac{I}{2}>0\,.
\end{displaymath}
This is in contradiction with the condition $\frac{\di}{\di t} \EE(\Lambda_{\overline{t}}) \leq 0$. Hence, $\EE(\Lambda_{t}) \geq \delta$ for every $t \in [0, +\infty)$ and, as a consequence,
\begin{equation}
\label{e:div-condition}
\lim_{t\to+\infty} \int_{0}^{t} \EE(\Lambda_{t}) \, \di t = +\infty\,.
\end{equation}
Theorem~\ref{t:concentration}(2) yields \eqref{e:concentration2}.
\end{proof}

\subsection{Convergence as $n \to \infty$}
\label{s:convergence}

Relying on the results of Section~\ref{s:auxiliary-SDE}, we now study the long-time behaviour of~\eqref{e:system-n}. 
For $n \in \mathbb{N}$, we denote by $(X^{n}_{t}, \Lambda^{n}_{t})$ the solutions to~\eqref{e:system-n} in $[0, +\infty)$. For $r>0$ we introduce the function $\alpha_{r}\in C^{\infty}_{c}( [0, +\infty))$ as
\begin{displaymath}
\alpha_{r} (t) \coloneqq\begin{cases}
e^{1 - \frac{r^{2}}{r^{2} - t^{2}}} & \text{for $t \leq r$}\,,\\
0 & \text{otherwise}\,.
\end{cases}
\end{displaymath}
We further let $\phi_{r} (x) \coloneqq \alpha_{r} ( \| x\|)$ for $x \in \R^{d}$. The next technical lemma provides a useful estimate for $\phi_{r}$\,.

\begin{lemma}
\label{l:phi-R}
For every $R, r >0$ there exists $q = q(R, r) >0$ such that for every $w \in \overline{B}_{R}$, and every $x \in \R^{d}$
\begin{equation}
\label{e:ineq-phi-r}
\nabla \phi_{r} (x) \cdot (w - x) + \frac{\varsigma^{2}}{2} \Delta \phi_{r} (x) \| w - x \|^{2} \geq -q \phi_{r} (x) \,.
\end{equation}
\end{lemma}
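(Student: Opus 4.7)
The function $\phi_r$ is smooth and vanishes identically, together with all its derivatives, on $\R^d \setminus B_r$; consequently~\eqref{e:ineq-phi-r} is trivially satisfied whenever $\|x\| \geq r$, and it remains to prove it for $\|x\| < r$, where $\phi_r(x) > 0$. Setting $u \coloneqq r^2 - \|x\|^2 \in (0, r^2]$, a direct differentiation yields
\[
\nabla\phi_r(x) = -\frac{2r^2 x}{u^2}\phi_r(x), \qquad \Delta\phi_r(x) = \frac{\phi_r(x)}{u^4}\bigl[4\|x\|^2 r^4 - 8\|x\|^2 r^2 u - 2dr^2 u^2\bigr].
\]
Dividing~\eqref{e:ineq-phi-r} by $\phi_r(x) > 0$, the claim reduces to producing $q = q(R, r) > 0$ such that
\[
h(x,w) \coloneqq -\frac{2r^2 x\cdot (w-x)}{u^2} + \frac{\varsigma^2\|w-x\|^2}{u^4}\bigl[2\|x\|^2 r^4 - 4\|x\|^2 r^2 u - dr^2 u^2\bigr] \geq -q
\]
for every $x \in B_r$ and every $w \in \overline{B}_R$.

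I would then distinguish two regimes via a threshold $u_0 = u_0(r,d) > 0$ to be chosen. For $u \geq u_0$ all the factors $u^{-k}$ are bounded and, since $\|x\| < r$ and $\|w-x\| \leq R + r$, the continuous function $h$ is bounded below on the corresponding compact set by a constant $C_1 = C_1(r, R, \varsigma, d)$. The delicate regime is $u \leq u_0$, where the $u^{-k}$ factors blow up. I would fix $u_0$ small enough to guarantee both that $u \leq u_0$ forces $\|x\|^2 \geq r^2/4$ and that the bracket $2\|x\|^2 r^4 - 4\|x\|^2 r^2 u - dr^2 u^2$ is bounded below by $r^6/4$, so that the two negative contributions of order $u^{-3}$ and $u^{-2}$ inside the Laplacian are absorbed by the leading positive $u^{-4}$ term. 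This gives
\[
\frac{\varsigma^2\|w-x\|^2}{u^4}\bigl[2\|x\|^2 r^4 - 4\|x\|^2 r^2 u - dr^2 u^2\bigr] \geq \frac{\varsigma^2 r^6 \|w-x\|^2}{4u^4} \qquad \text{whenever } u \leq u_0.
\]

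It remains to absorb the drift term $-2r^2 x\cdot(w-x)/u^2$. Using $\|x\| \leq r$ together with Young's inequality $2ab \leq \eps a^2 + \eps^{-1} b^2$ applied to $a = \varsigma r^3 \|w-x\|/u^2$, $b = 1/\varsigma$, $\eps = 1/8$, one obtains
\[
\left|\frac{2r^2 x\cdot(w-x)}{u^2}\right| \leq \frac{2r^3\|w-x\|}{u^2} \leq \frac{\varsigma^2 r^6\|w-x\|^2}{8u^4} + \frac{8}{\varsigma^2}.
\]
Summing with the previous estimate gives $h(x,w) \geq \varsigma^2 r^6 \|w-x\|^2/(8u^4) - 8/\varsigma^2 \geq -8/\varsigma^2$ whenever $u \leq u_0$. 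Taking $q \coloneqq \max\{C_1, 8/\varsigma^2\}$ then concludes the proof. The main obstacle is the matching of the $u$-exponents in the Young step: the drift scales like $\|w-x\|/u^2$ while the dominating positive Laplacian contribution scales like $\|w-x\|^2/u^4$, and this $u^{-2}$ versus $u^{-4}$ imbalance is exactly what allows the inequality to close without losing any multiplicative factor of $\phi_r$.
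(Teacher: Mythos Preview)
Your proof is correct. The mechanism is the same as the paper's---the leading positive term $\|x\|^2 r^4/u^4$ in $\Delta\phi_r/\phi_r$ dominates the $u^{-2}$ singularity of the drift as $u=r^2-\|x\|^2\to 0$---but your packaging is tighter. The paper decomposes $w-x$ into its radial and tangential components, treats the tangential part of the Laplacian term separately, splits $\rho\in[0,r/\sqrt{2}]$ versus $\rho\in(r/\sqrt{2},r]$, and in the latter range explicitly minimizes the one-variable function $j_{r,t}(\rho)=-r|t|(r^2-\rho^2)^{-2}+\tfrac{\varsigma^2}{4}t^2 r^4(r^2-\rho^2)^{-4}$ over $\rho$ to get a bound $-1/(2\varsigma^2 r^2)$. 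Your Young-inequality step accomplishes the same absorption in one line, without the radial decomposition, and yields an entirely analogous bound $-8/\varsigma^2$. The paper's route is more explicit about the constants; yours is shorter and avoids the case analysis.
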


\begin{proof}
We represent $\nabla \phi_{r}$ and $\Delta \phi_{r}$ in polar coordinates, in the point $x = \rho e_{\rho}$, where $\rho = \|x \|$ and $e_{\rho}$ denotes the radial unit vector. Then, by a direct computation we have that for $\rho \in [0, r)$
\begin{align*}
\nabla \phi_{r} (x) & = \alpha_{r}'(\rho) \frac{x}{\rho} = -\frac{2 \rho r^{2}\alpha_{r} (\rho)}{\rho (r^{2} - \rho^{2})^{2}}\, x\,,\\
\Delta \phi_{r} (x) & = \alpha''_{r} (\rho) + (d-1) \frac{\alpha'_{r} (\rho)}{\rho} 
= \frac{2 r^{2}\alpha_{r} (\rho) }{(r^{2} - \rho^{2})^{2}} \bigg( \frac{2\rho^{2}r^{2}}{(r^{2} - \rho^{2})^{2}}  - \frac{4\rho^{2}}{(r^{2} - \rho^{2})} - d \bigg) \,.
\end{align*} 
We now consider a vector $w \in \R^d$ with $\| w \| \leq R$ and such that $w= (w \cdot e_{\rho}) e_{\rho}$. We rewrite $w = w - x + x = x + t e_{\rho} + w^{\bot}$, for some $t \in [- R, R]$ and $w^{\bot} \perp e_{\rho}$ with $\| w^{\bot} \| \leq R$. Then,
\begin{align}
\label{e:formula-nabla-delta}
\nabla & \phi_{r} (x) \cdot (w - x) + \frac{\varsigma^{2}}{2}  \Delta \phi_{r} (x) \| w - x \|^{2} 
\\
&=  -\frac{2 \rho r^{2}\alpha_{r} (\rho)}{ (r^{2} - \rho^{2})^{2}}\, t + ( |t|^{2} + \| w^{\bot} \|^{2})  \frac{ \varsigma^{2}  r^{2}\alpha_{r} (\rho) }{(r^{2} - \rho^{2})^{2}} \bigg( \frac{2\rho^{2}r^{2}}{(r^{2} - \rho^{2})^{2}}  - \frac{4\rho^{2}}{(r^{2} - \rho^{2})} - d \bigg) \nonumber
\\
&
= 2r^{2}\alpha_{r} (\rho)  \bigg[ - \frac{\rho t }{(r^{2}- \rho^{2})^{2}} + \frac{\varsigma^{2}}{2}  (  t^{2} + \| w^{\bot} \|^{2}) \bigg( \frac{2\rho^{2}r^{2}}{(r^{2} - \rho^{2})^{4}}  - \frac{4\rho^{2}}{(r^{2} - \rho^{2})^{3}} - \frac{d}{(r^{2}- \rho^{2})^{2}} \bigg)  \bigg] \,. \nonumber
\end{align}
We notice that there exists $c_{1} (R,  r) < 0$, independent of $w$, such that
\begin{displaymath}
\| w^{\bot} \|^{2} \bigg( \frac{2\rho^{2}r^{2}}{(r^{2} - \rho^{2})^{4}}  - \frac{4\rho^{2}}{(r^{2} - \rho^{2})^{3}} - \frac{d}{(r^{2}- \rho^{2})^{2}} \bigg) \geq c_{1} (R, r)\,.
\end{displaymath}
This is a consequence of the continuity with respect to $\rho$ and of the limits
\begin{align*}
\lim_{\rho \to r} \frac{2\rho^{2}r^{2}}{(r^{2} - \rho^{2})^{4}}  - \frac{4\rho^{2}}{(r^{2} - \rho^{2})^{3}} - \frac{d}{(r^{2}- \rho^{2})^{2}}  & = + \infty \,,\\
\lim_{\rho \to 0} \frac{2\rho^{2}r^{2}}{(r^{2} - \rho^{2})^{4}}  - \frac{4\rho^{2}}{(r^{2} - \rho^{2})^{3}} - \frac{d}{(r^{2}- \rho^{2})^{2}} &  = -\frac{d}{r^{2}} \,.
\end{align*}
It is therefore enough to control the terms multiplying~$t$ in~\eqref{e:formula-nabla-delta}. Notice that we may always assume $t \neq 0$ in what follows.

For $\rho \in [0, \frac{r}{\sqrt{2}}]$ we have 
\begin{align*}
-\frac{\rho t}{(r^{2}- \rho^{2})^{2}} &  + \frac{\varsigma^{2}}{2}  t^{2} \bigg( \frac{2\rho^{2}r^{2}}{(r^{2} - \rho^{2})^{4}}  - \frac{4\rho^{2}}{(r^{2} - \rho^{2})^{3}} - \frac{d}{(r^{2}- \rho^{2})^{2}}\bigg) 
\\
&
 \geq -\frac{ \rho | t |}{(r^{2}- \rho^{2})^{2}}  + \frac{\varsigma^{2}}{2}  t^{2} \bigg( \frac{2\rho^{2}r^{2}}{(r^{2} - \rho^{2})^{4}}  - \frac{4\rho^{2}}{(r^{2} - \rho^{2})^{3}} - \frac{d}{(r^{2}- \rho^{2})^{2}}  \bigg) 
 \\
 &
 \geq - \frac{ 2\sqrt{2}  R }{r^{3} } + \frac{\varsigma^{2}}{2}   R  ^{2} \bigg( - \frac{16}{r^{4}} - \frac{4d}{r^{4}} \bigg) =: c_{2}(R, r)\,.
\end{align*}
For $\rho \in (\frac{r}{\sqrt{2}} , r]$ and $t \in [-R, R]\setminus\{0\}$  we have that 
\begin{align*}
\frac{\varsigma^2}{2}  t^{2} \bigg( \frac{\rho^{2}r^{2}}{(r^{2} - \rho^{2})^{4}}  - \frac{4\rho^{2}}{(r^{2} - \rho^{2})^{3}} - \frac{d}{(r^{2}- \rho^{2})^{2}} \bigg)  \geq c_{3} (R, r)
\end{align*}
for some finite constant~$c_{3} (R, r) \in \R$. For $\rho \in (\frac{r}{\sqrt{2}}, r]$ and $t \in [-R, R]\setminus\{0\}$ we further estimate
\begin{align*}
-\frac{\rho t  }{(r^{2}- \rho^{2})^{2}}  + \frac{\varsigma^{2}}{2} t^{2}  \frac{\rho^{2}r^{2}}{(r^{2} - \rho^{2})^{4}}\geq - \frac{r|t| }{(r^{2}- \rho^{2})^{2}} +  \frac{\varsigma^{2}}{2} t^{2}  \frac{r^{4}}{2(r^{2} - \rho^{2})^{4}} =: j_{r, t} (\rho) \,.
\end{align*}
In particular, we may restrict to $t >0$. By a direct computation, for $t>0$ we find that $j_{r, t} \colon \R \to \R$ has a minimum point in $\overline{\rho}$ satisfying
\begin{displaymath}
(r^{2} - \overline{\rho}^{2})^{2} = \varsigma^{2}\, t\, r^{3} \,.
\end{displaymath}
Substituting in $j_{r, \rho}$ we obtain that
\begin{displaymath}
j_{r, t} (\rho) \geq j_{r, t} ( \overline{\rho} ) = - \frac{1}{\varsigma^{2} r^{2}} + \frac{1}{ 2 \varsigma^{2} r^{2}} =: c_{4} (R, r)\,.
\end{displaymath}
Setting $q(R, r) \coloneqq 2r^{2} \big( c_{1} (R, r) +  \max\{c_{2}(R, r) \ ; \ c_{3} (R, r) + c_{4} (R, r)\} \big) $ we infer~\eqref{e:ineq-phi-r}.
\end{proof}

As a consequence of Lemma~\ref{l:phi-R}, we have the following estimate from below on the measure $\rho^{n}_{t} (B_{r})$ along the solution to system~\eqref{e:system-n}.

\begin{proposition}
\label{e:mass-ball-r}
Assume that $(T1)$--$(T2)$ and $(f1)$--$(f3)$ hold for every $n \in \mathbb{N}$, with a uniform constant~$M_{f_{n}} = M \geq 0$ in $(f2)$. Let $\widehat{X}_0\colon\Omega\to\R^d$ an $\sF_0$-measurable random variable belonging to $L^4(\Omega,\sF,\prob)$, and $\widehat{\Lambda}_0\colon\Omega\to[0,1]$ an $\sF_0$-measurable random variable belonging to $L^1(\Omega,\sF,\prob)$, and let $(X^{n}_{t}, \Lambda^{n}_{t})$ be the solution to system~\eqref{e:system-n} in $[0, +\infty)$ with initial condition $(X^{n}_{0}, \Lambda^{n}_{0} ) = ( \widehat{X}_{0}, \widehat{\Lambda}_{0})$. Then, for every $T>0$ and every $r>0$ there exists a positive constant $\overline{q} = \overline{q}(T, r)>0$ independent of~$n$ such that
\begin{equation}
\label{e:estimate-mass}
\rho^{n}_{t} (B_{r}) \geq \E ( \phi_{r} ( \widehat{X}_{0} ))  e^{-\overline{q} t }  \qquad \text{for $t \in [0, T]$.}
\end{equation}
\end{proposition}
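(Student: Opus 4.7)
The strategy is to test the SDE for $X^n_t$ against $\phi_r$ via It\^o's formula, take expectations so that the martingale term drops out, and then apply the pointwise estimate of Lemma \ref{l:phi-R} to obtain a linear differential inequality for $t \mapsto \E[\phi_r(X^n_t)]$; a Gr\"onwall step and the fact that $0\le \phi_r \le \mathbf 1_{B_r}$ then yield \eqref{e:estimate-mass}.

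\textbf{Step 1: Uniform-in-$n$ control of the drift target.} Write the drift in \eqref{e:system-n} as $v^n_{\rho^n_t}(X^n_t,\Lambda^n_t) = w^n_t - X^n_t$, where
\[
w^n_t \coloneqq \Lambda^n_t f_n(\rho^n_t) + (1-\Lambda^n_t)e(\rho^n_t).
\]
Using $(f2)$ with the uniform constant $M$ together with $\|e(\rho^n_t)\| \le m_1(\rho^n_t)$, one has $\|w^n_t\| \le (M+1)(1+m_1(\rho^n_t))$ pointwise in $\omega$. By Proposition \ref{p:estimate-well-posed-truncated} combined with Remark \ref{r:aremark}, the second-moment bound on $\rho^n_t$ holds with a constant depending only on $\nu,\varsigma$ and $\E[\|\widehat X_0\|^2]$ (the constant $M$ enters only through that same uniform bound). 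Hence there exists $R=R(T,\E[\|\widehat X_0\|^2])>0$, \emph{independent of $n$}, such that $\|w^n_t\|\le R$ almost surely for all $t\in[0,T]$ and all $n\in\mathbb N$.

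\textbf{Step 2: It\^o formula for $\phi_r$ and expectation.} Since $\phi_r\in C^\infty_c(\R^d)$, It\^o's formula applied to $\phi_r(X^n_t)$ gives
\begin{align*}
\phi_r(X^n_t) =\,& \phi_r(\widehat X_0) + \int_0^t \Big[ \nabla\phi_r(X^n_s)\cdot (w^n_s - X^n_s) + \tfrac{\varsigma^2}{2}\|w^n_s - X^n_s\|^2\Delta\phi_r(X^n_s) \Big]\, \de s \\
& + \varsigma \int_0^t \|w^n_s - X^n_s\|\,\nabla\phi_r(X^n_s) \cdot \de B_s.
\end{align*}
Because $\nabla\phi_r$ is bounded and compactly supported and $X^n$ has finite fourth moment uniformly in $t\in[0,T]$ by Theorem \ref{t:well-posedness}, the integrand of the stochastic integral is in $L^2(\Omega\times[0,T])$, so the stochastic integral is a martingale with zero expectation. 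Taking expectations yields
\[
\frac{\de}{\de s}\E[\phi_r(X^n_s)] = \E\Big[ \nabla\phi_r(X^n_s)\cdot (w^n_s - X^n_s) + \tfrac{\varsigma^2}{2}\|w^n_s - X^n_s\|^2\Delta\phi_r(X^n_s) \Big].
\]

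\textbf{Step 3: Application of Lemma \ref{l:phi-R} and Gr\"onwall.} By Step~1, $w^n_s(\omega)\in\overline B_R$ almost surely, so Lemma \ref{l:phi-R} with this $R$ and the chosen $r$ produces a constant $q=q(R,r)>0$ such that, pathwise,
\[
\nabla\phi_r(X^n_s)\cdot (w^n_s - X^n_s) + \tfrac{\varsigma^2}{2}\|w^n_s - X^n_s\|^2\Delta\phi_r(X^n_s) \ge -q\,\phi_r(X^n_s).
\]
Integrating in $\omega$ gives $\frac{\de}{\de s}\E[\phi_r(X^n_s)]\ge -q\,\E[\phi_r(X^n_s)]$, and Gr\"onwall's lemma yields
\[
\E[\phi_r(X^n_t)] \ge \E[\phi_r(\widehat X_0)]\, e^{-qt}, \qquad t\in[0,T].
\]
Since $0\le \phi_r(x)\le \mathbf 1_{B_r}(x)$ for all $x\in\R^d$, we conclude $\rho^n_t(B_r)\ge \E[\phi_r(X^n_t)]\ge \E[\phi_r(\widehat X_0)]e^{-\overline q t}$ with $\overline q=\overline q(T,r)\coloneqq q(R(T,\E[\|\widehat X_0\|^2]),r)$, independent of $n$.

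The main obstacle is ensuring that the constant $\overline q$ is independent of $n$: this is exactly what the uniformity in $n$ of the constant $M$ in $(f2)$ buys us, through the moment estimate of Proposition \ref{p:estimate-well-posed-truncated} (and Remark \ref{r:aremark}), which is what allows a single $R$ to dominate all $\|w^n_t\|$ and hence a single $q$ from Lemma \ref{l:phi-R} to work for every $n$.
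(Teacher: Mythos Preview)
Your proof is correct and follows essentially the same route as the paper: apply It\^o's formula to $\phi_r(X^n_t)$, use the uniform-in-$n$ moment bound (via the uniform constant $M$ in $(f2)$) to confine the deterministic target $w^n_t$ to a fixed ball $\overline{B}_R$, invoke Lemma~\ref{l:phi-R} with that $R$, and conclude by Gr\"onwall and $\phi_r\le\mathbf 1_{B_r}$. The only cosmetic difference is that the paper cites Theorem~\ref{t:well-posedness} directly for the moment bound rather than Proposition~\ref{p:estimate-well-posed-truncated}, but the content is the same.
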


\begin{proof}
Let us fix $T >0$. We apply It\^o equality with test function $\phi_{r}$\,. For every $t \in [0, +\infty)$ we have
\begin{align}
\label{e:estimate-phi-r}
\E ( \phi_{r} ( X^{n}_{t} ) ) = &\ \E ( \phi_{r} ( \widehat{X}_{0} )) + \int_{0}^{t} \E \big( \nabla \phi_{r} ( X^{n}_{s} ) \cdot  v^{n}_{\rho^{n}_{s}} ( X^{n}_{s} , \Lambda^{n}_{s})  \big) \, \de s
\\
&
+ \frac{\varsigma^{2}}{2}  \int_{0}^{t} \E\big( \Delta \phi_{r} ( X^{n}_{s} ) \, \|  v^{n}_{\rho^{n}_{s}} ( X^{n}_{s} , \Lambda^{n}_{s})  \|^{2} \big) \, \de s\,. \nonumber
\end{align}
Let us fix $T>0$. In view of Theorem~\ref{t:well-posedness} and Remark~\ref{r:aremark}, we have that there exists~$C= C(T)>0$ independent of~$n$ such that 
\begin{displaymath}
\E( \|X^{n}_{s} \|)\leq C \qquad \text{and} \qquad \E( \|X^{n}_{s} \|^{2}) \leq C \qquad \text{for $n \in \mathbb{N}$ and $s \in [0, T]$}\,.
\end{displaymath}
In view of $(f2)$, for every $n$, every $s \in [0, T]$, and $\mathbb{P}$-a.e.~$\omega \in \Om$ it holds that 
\begin{align*}
\| \Lambda^{n}_{s} f_{n} ( \rho^{n}_{s} ) + ( 1 - \Lambda^{n}_{s} ) e(\rho^{n}_{s}) \| & \leq \| f_{n} (\rho^{n}_{s}) \| + \| e(\rho^{n}_{s}) \|
 \leq M_{f} \big( 1 + m_{2} (\rho^{n}_{t}) \big)  + C \leq M_{f} ( 1 + C) + C ,
\end{align*}
where $M_{f} \geq 0$ is the constant appearing in~$(f2)$.  Applying Lemma~\ref{l:phi-R} with $R = \sqrt{C_{0} ( 1 + C)} + C$ we may proceed in~\eqref{e:estimate-phi-r} with 
\begin{align}
\label{e:estimate-phi-r-2}
\E ( \phi_{r} ( X^{n}_{t} ) ) \geq  \E ( \phi_{r} ( \widehat{X}_{0} )) - \overline{q} \int_{0}^{t} \phi_{r} (X_{s}^{n}) \, \de s\,.
\end{align}
where $\overline{q} = \overline{q} (R, r)>0$ is the constant in \eqref{e:ineq-phi-r}. Notice that $\overline{q}$ ultimately depends on~$T$ and~$r$, as $C$ only depends on $T$. By Gr\"onwall inequality, we deduce from~\eqref{e:estimate-phi-r-2} that
\begin{displaymath}
\E ( \phi_{r} ( X^{n}_{t} ) ) \geq  \E ( \phi_{r} ( \widehat{X}_{0} )) e^{-\overline{q} t} \qquad \text{for $t \in [0, T]$.}
\end{displaymath}
This yields~\eqref{e:estimate-mass} as $0 \leq \phi_{r} \leq 1$.
\end{proof}

We are now in a position to state and prove the main result of this section.

\begin{theorem}
\label{t:systems are close}
 Let $B \colon \Omega \to C([0,T],\R^d)$ a $d$-dimensional standard Brownian motion defined on a filtered probability space $\bigl(\Omega, \sF, (\sF_t)_{t\in[0,T]},\prob\bigr)$, $\widehat{X}_0\colon\Omega\to\R^d$ an $\sF_0$-measurable random variable belonging to $L^4(\Omega,\sF,\prob)$, and $\widehat{\Lambda}_0\colon\Omega\to [0,1]$ an $\sF_0$-measurable random variable. Assume that $(f1)$--$(f3)$ and $(T1)$--$(T3)$ hold for every $n \in \mathbb{N}$, with constant~$M_{f_{n}} = M \geq 0$ in $(f2)$, that $\varsigma^{2}d <2$, and that 
\begin{align}
\label{e:hp-mass}
\E(\widehat{\Lambda}_{0}) >0\, \qquad\text{and} \qquad  \mu_{0} (B_{r}) >0 \ \text{for every $r>0$}.
\end{align}
Then, for every $\varepsilon>0$ there exist $T_{\varepsilon}>0$ and $n_{\varepsilon}\in \mathbb{N}$ such that
\begin{align}
\label{e:thesis-thm}
\E( \| X^{n}_{T_{\eps}} \|^{2}) \leq \eps \qquad \text{for every $n \geq n_{\eps}$\,.}
\end{align} 
\end{theorem}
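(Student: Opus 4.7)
The plan is to compare the solution $(X^{n}, \Lambda^{n})$ of~\eqref{e:system-n} with the solution $(X, \Lambda)$ of the auxiliary system~\eqref{e:system2}, coupled through the same Brownian motion and the same initial data $(\widehat{X}_{0}, \widehat{\Lambda}_{0})$. Given $\varepsilon > 0$, I would first invoke Theorem~\ref{t:concentration2}: under $(T1)$--$(T3)$, $\varsigma^{2} d < 2$, and $\E(\widehat{\Lambda}_{0}) > 0$, it produces a time $T_{\varepsilon} > 0$ with $\E(\|X_{T_{\varepsilon}}\|^{2}) \leq \varepsilon/4$. The desired estimate~\eqref{e:thesis-thm} would then follow by the triangle inequality from a bound of the form $\E(\|X^{n}_{T_{\varepsilon}} - X_{T_{\varepsilon}}\|^{2}) \leq \varepsilon/4$ for all sufficiently large $n$.

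The crux is an $n$-uniform smallness estimate for the extra drift $f_{n}(\rho^{n}_{t})$ on $[0, T_{\varepsilon}]$. Because the constant $M_{f_{n}} = M$ in~$(f2)$ is $n$-independent, Theorem~\ref{t:well-posedness} combined with Remark~\ref{r:aremark} yields an $n$-uniform upper bound $m_{2}(\rho^{n}_{t}) \leq R$ on $[0, T_{\varepsilon}]$. Moreover, $\phi_{r}$ is strictly positive on $B_{r}$, so the hypothesis $\mu_{0}(B_{r}) > 0$ gives $\E(\phi_{r}(\widehat{X}_{0})) > 0$, and Proposition~\ref{e:mass-ball-r} furnishes the uniform lower bound
\begin{equation*}
\rho^{n}_{t}(B_{r}) \geq \ell(r) \coloneqq \E(\phi_{r}(\widehat{X}_{0})) \, e^{-\overline{q}(T_{\varepsilon}, r) T_{\varepsilon}} > 0
\end{equation*}
for all $n \in \mathbb{N}$, $r > 0$, and $t \in [0, T_{\varepsilon}]$. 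Hence every $\rho^{n}_{t}$ belongs to the set $E_{\ell, R}$ of~$(f3)$, and therefore $\delta_{n} \coloneqq \sup_{t \in [0, T_{\varepsilon}]} \|f_{n}(\rho^{n}_{t})\| \to 0$ as $n \to \infty$.

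For the comparison itself I would apply It\^o's formula to $\|X^{n}_{t} - X_{t}\|^{2} + |\Lambda^{n}_{t} - \Lambda_{t}|^{2}$ under the synchronous coupling, and denote by $\phi_{n}(t)$ the expectation of this quantity. The drift contribution splits into a Lipschitz piece in $(X^{n} - X, \Lambda^{n} - \Lambda)$ arising from~$(v1)$--$(v3)$, Remark~\ref{r:e-lip}, $(T1)$, and the synchronous-coupling inequality $W_{1}(\Sigma^{n}_{t}, \Psi_{t}) \leq \E(\|X^{n}_{t} - X_{t}\| + |\Lambda^{n}_{t} - \Lambda_{t}|)$, plus an extra perturbation $\Lambda^{n}_{t} f_{n}(\rho^{n}_{t})$ controlled by $\delta_{n}$. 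Together with It\^o's isometry for the martingale term, this yields a Gr\"onwall-type inequality
\begin{equation*}
\phi_{n}(t) \leq C \int_{0}^{t} \phi_{n}(s)\, \di s + C' T_{\varepsilon} \delta_{n}^{2}, \qquad t \in [0, T_{\varepsilon}],
\end{equation*}
with constants depending only on $T_{\varepsilon}$, $\varsigma$, $R$, $L_{\cT}$, and $M$. Gr\"onwall's lemma then gives $\phi_{n}(T_{\varepsilon}) \leq C(T_{\varepsilon}) \delta_{n}^{2}$, and choosing $n_{\varepsilon}$ so that this is $\leq \varepsilon/4$ finishes the proof. The step I expect to be most delicate is handling the martingale part: the quadratic variation involves $\bigl(\|v^{n}_{\rho^{n}_{s}}(X^{n}_{s}, \Lambda^{n}_{s})\| - \|v_{\mu_{s}}(X_{s}, \Lambda_{s})\|\bigr)^{2}$, which must be dominated via the reverse triangle inequality by $\|v^{n}_{\rho^{n}_{s}}(X^{n}_{s}, \Lambda^{n}_{s}) - v_{\mu_{s}}(X_{s}, \Lambda_{s})\|^{2}$ and then split, using~$(v1)$--$(v3)$, into a linear-in-$\phi_{n}$ piece absorbable by Gr\"onwall and a $\delta_{n}^{2}$ remainder, without letting the implicit constants blow up with $R$.
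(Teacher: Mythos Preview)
Your proposal is correct and follows essentially the same route as the paper: invoke Theorem~\ref{t:concentration2} for the auxiliary system, then compare via It\^o's formula applied to $\|(X^{n}_{t},\Lambda^{n}_{t})-(X_{t},\Lambda_{t})\|^{2}$, obtain a Gr\"onwall inequality with source term $\int_{0}^{T_{\eps}}\|f_{n}(\rho^{n}_{s})\|^{2}\,\di s$, and kill the source via Proposition~\ref{e:mass-ball-r} together with~$(f3)$ and the uniform second-moment bound from Remark~\ref{r:aremark}. One minor point of terminology: after taking expectations the martingale integral vanishes, and what you call the ``martingale part'' is really the It\^o second-order term $\frac{\varsigma^{2}d}{2}\,\E\bigl[(\|v^{n}_{\rho^{n}_{s}}\|-\|v_{\mu_{s}}\|)^{2}\bigr]$, but your treatment of it via the reverse triangle inequality and $(v1)$--$(v3)$ is exactly what the paper does.
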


\begin{proof}
Let us denote by $(X_{t}, \Lambda_{t})$ the solution to the auxiliary system~\eqref{e:system2} with initial conditions~$(\widehat{X}_{0}, \widehat{\Lambda}_{0})$, and let $\Psi_{t} = \Law (X_{t}, \Lambda_{t})$. Under the assumptions $(T1)$--$(T3)$, we have shown in Theorem \ref{t:concentration2} that $\E( \|X_{t} \|^{2}) \to 0$ as $t \to +\infty$. In particular, for every $\eps > 0$ there exists $T_{\eps}>0$ such that
\begin{displaymath}
\E( \| X_{t} \|^{2}) \leq \frac{\eps}{2} \qquad \text{for every $t \geq {T}_{\eps}$\,.}
\end{displaymath}
Hence, it is enough to show that there exists $n_{\eps} \in \mathbb{N}$ such that
\begin{equation}
\label{e:to-prove}
\E( \| X^{n}_{T_{\eps}} - X_{T_{\eps}} \|^{2}) \leq \frac{\eps}{2}\,.
\end{equation}

Let us estimate~$\E(\| (X^{n}_{t}, \Lambda^{n}_{t})  - ( X_{t}, \Lambda_{t}) \|^{2})$ using It\^o formula. 
For $t \in [0, T_{\eps}]$, we have
\begin{align}
\label{e:conv-ito}
& \E( \| (X^{n}_{t}, \Lambda^{n}_{t})  - ( X_{t}, \Lambda_{t}) \|^{2} ) = 2 \int_{0}^{t} (\Lambda^{n}_{s} - \Lambda_{s} ) ( \cT_{\Sigma^{n}_{s}} (X^{n}_{s} , \Lambda^{n}_{s}) -  \cT_{\Psi_{s}}(X_{s}, \Lambda_{s})) \, \de s  
\\
&
+ 2 \int_{0}^{t} \E \big( ( X^{n}_{s} - X_{s}) \cdot ( X_{s} - X^{n}_{s} + \Lambda^{n}_{s} f_{n} (\rho^{n}_{s}) + ( 1 - \Lambda^{n}_{s}) e(\rho^{n}_{s}) - (1 - \Lambda_{s})  e(\mu_{s}) ) \big) \, \de s \nonumber
\\
&
+ \frac{\varsigma^{2} d }{2}  \int_{0}^{t} \E \Big( \big( \| X^{n}_{s} - \Lambda^{n}_{s} f_{n} (\rho^{n}_{s}) - ( 1 - \Lambda^{n}_{s}) e(\rho^{n}_{s}) \| - \|  X_{s} - (1 - \Lambda_{s})  e(\mu_{s}) ) \| \big)^{2} \Big)  \, \de s \,.\nonumber
\end{align}
We recall that, by Theorem~\ref{t:well-posedness} and by Remark~\ref{r:aremark}, $\E( \| X^{n}_{s}\|^{2})$ and~$\E( \| X_{s}\|^{2})$ are bounded in~$[0, T_{\eps}]$ uniformly with respect to $n$, as we are assuming $M_{f_{n}} = M$ in $(f2)$ for every $n \in \mathbb{N}$. By simple algebraic manipulation, by H\"older and triangle inequalities, and by $(T1)$, we thus deduce that
\begin{align}
\label{e:conv-ito-2}
\E( \|(X^{n}_{t}, &  \Lambda^{n}_{t}) - ( X_{t}, \Lambda_{t}) \|^{2})  \leq   2 L_{\cT} \int_{0}^{t} \E \Big[ |\Lambda^{n}_{s} - \Lambda_{s}| \big(  | \Lambda^{n}_{s} - \Lambda_{s}|  +  \| X^{n}_{s} - X_{s} \| + W_{1} (\Sigma^{n}_{s}, \Psi_{s}) \big) \Big] \, \de s 
\\
&
\quad -2\int_{0}^{t} \E( \|X^{n}_{s} - X_{s} \|^{2}) \, \de s 
 + 2 \int_{0}^{t} \E( \Lambda^{n}_{s} (X^{n}_{s} - X_{s}) \cdot f_{n} (\rho^{n}_{s}) \big) \, \de s  \nonumber
\\
&
\quad + 2 \int_{0}^{t} \big \| \E(  X^{n}_{s} - X_{s} ) \big \|^{2}\, \de s 
 + 2 \int_{0}^{t} \E( ( X^{n}_{s} - X_{s}) \cdot (\Lambda^{n}_{s} - \Lambda_{s}) \E(X^{n}_{s}) ) \, \de s \nonumber
\\
&
\quad + 2 \int_{0}^{t} \E( \Lambda_{s}( X^{n}_{s} - X_{s})) \cdot \E(X^{n}_{s} - X_{s}) \, \de s \nonumber
\\
&
\quad + \frac{\varsigma^{2} d}{2}  \int_{0}^{t} \E\big( \| X^{n}_{s} - X_{s}  - \Lambda^{n}_{s} f_{n} (\rho^{n}_{s}) - ( 1 - \Lambda^{n}_{s}) e(\rho^{n}_{s})  - (1 - \Lambda_{s})  e(\mu_{s}) \|^{2} \big) \, \de s\nonumber
\\
&
\leq  C_{{\eps}} \int_{0}^{t} \E( \| ( X^{n}_{s}, \Lambda^{n}_{s})  - ( X_{s}, \Lambda_{s}) \|^{2}) \, \de s \nonumber
\\
&
\nonumber \quad  + C_{{\eps}} \int_{0}^{t} |\Lambda^{n}_{s} - \Lambda_{s}| \,   \E( \| ( X^{n}_{s}, \Lambda^{n}_{s})  - ( X_{s}, \Lambda_{s}) \| ) \, \de s + C_{\eps} \int_{0}^{t} \| f_{n} (\rho^{n}_{s})\|^{2} \, \di s \,, 
\\
&
\leq \overline{C}_{{\eps}} \bigg(  \int_{0}^{t}  \E( \| ( X^{n}_{s}, \Lambda^{n}_{s})  - ( X_{s}, \Lambda_{s}) \|^{2})   \, \de s  + \int_{0}^{t} \| f_{n} (\rho^{n}_{s})\|^{2} \, \di s \bigg) \,, \nonumber
\end{align}
for some positive constants~$C_{\eps}\,, \overline{C}_{{\eps}}$ only depending on~$T_{\eps}$\,, $d$, $\varsigma$, and the initial condition~$\widehat{X}_{0}$\,. 
By Gr\"onwall inequality,~\eqref{e:conv-ito-2} yields
\begin{align}
\label{e:conv-ito-3}
\E( \|X^{n}_{t} - X_{t} \|^{2}) & \leq \E( \| (X^{n}_{t},  \Lambda^{n}_{t}) - ( X_{t}, \Lambda_{t}) \|^{2}) \leq \overline{C}_{{\eps}} e^{\overline{C}_{{\eps}} T_{\eps}} \int_{0}^{T_{\eps}}  \| f_{n} (\rho^{n}_{s}) \|^{2}  \, \di s  
\end{align}
for every $t \in [0, T_{\eps}]$. In view of Proposition~\ref{e:mass-ball-r} and of assumption~\eqref{e:hp-mass}, we are in a position to use~$(f3)$ with $\ell (r) = \E( \phi_{r} (\widehat{X}_{0}))  e^{-\overline{q} T_{\eps}}$ where $\overline{q} = \overline{q} (T, r) >0$ is defined in~\eqref{e:estimate-mass}. Hence,~$f_{n}(\rho^{n}_{s}) \to 0$ as $n  \to \infty$, uniformly in $s \in [0, T_{\eps}]$. Combining such convergence with~\eqref{e:conv-ito-3}, we infer~\eqref{e:to-prove}.
\end{proof}

As a corollary of Theorem~\ref{t:systems are close}, we get the following control estimate concerning the particle approximation of~\eqref{e:particle-SDE}.

\begin{corollary}
Under the assumptions of Theorem~\ref{t:systems are close}, for every $n, N \in \mathbb{N}$ let $(\widehat{X}^{i}_{0} , \widehat{\Lambda}^{i}_{0}) \in L^{4} (\Om, \sF, \mathbb{P}; \R^{d} \times [0, 1])$ be distributed as $( \widehat{X}_{0} , \widehat{\Lambda}_{0} )$ and let us denote by $(X^{i, n, N} , \Lambda^{i, n, N})$ the solution to
\begin{align}
\label{e:complicated system}
\begin{cases}
\di X^{i, n, N} = v^{n}_{\rho^{n, N}_{t}} (X^{i, n, N}_{t} , \Lambda^{i, n, N}_{t}) \, \di t + \varsigma \| v^{n}_{\rho^{n, N}_{t}} (X^{i, n, N}_{t} , \Lambda^{i, n, N}_{t} ) \| \, \di B_{t}\,,\\[1mm]
\di \Lambda^{i, n, N} = \cT_{\Sigma^{n, N}_{t}} (X^{i, n, N}_{t} , \Lambda^{i, n, N}_{t}) \, \di t\,,\\[1mm]
(X^{i, n, N}_{0}, \Lambda^{i, n, N}_{0}) = (\widehat{X}^{i}_{0} , \widehat{\Lambda}^{i}_{0})\,,\\[1mm]
\Sigma^{n, N} = \frac{1}{N} \sum_{j=1}^{N} \delta_{(X^{i, n, N}, \Lambda^{i, n, N})}\,, \quad \rho^{n, N}= (\pi_{x})_{\#} \Sigma^{n, N}\,.
\end{cases}
\end{align}
Then, for every $\varepsilon>0$ there exist $T_{\varepsilon} >0$ and $n_{\varepsilon} \in \mathbb{N}$ such that for every $n \geq n_{\varepsilon}$ there is $N_{\varepsilon, n} \in \mathbb{N}$ such that
\begin{displaymath}
\E \big( m_{2} ^{2} (\Sigma^{n, N}_{T_{\varepsilon}}) \big) < \varepsilon \qquad \text{for $n \geq n_{\varepsilon}$ and $N \geq N_{\varepsilon, n}$}.
\end{displaymath}
\end{corollary}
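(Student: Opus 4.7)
The plan is to combine the concentration estimate of Theorem~\ref{t:systems are close} for the mean-field system~\eqref{e:system-n} with the particle-to-mean-field convergence established in Section~2. The crucial observation is that, for each fixed $n \in \mathbb{N}$, the field $f_{n}$ still satisfies $(f1)$--$(f2)$ (with uniform constant $M_{f_{n}} = M$ in $(f2)$), so Proposition~\ref{p:well-posed-finite}, Lemma~\ref{l:particle-compactness}, and Proposition~\ref{p:limit-PDE-solved} apply verbatim to system~\eqref{e:complicated system} with $v^{n}$ in place of $v$, yielding, for each $n$, a mean-field limit as $N\to\infty$ which coincides with the solution $(X^{n},\Lambda^{n})$ of~\eqref{e:system-n}.

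Given $\varepsilon > 0$, I would first apply Theorem~\ref{t:systems are close} to select $T_{\varepsilon} > 0$ and $n_{\varepsilon} \in \mathbb{N}$ such that $\E(\|X^{n}_{T_{\varepsilon}}\|^{2}) \leq \varepsilon/2$ for every $n \geq n_{\varepsilon}$, where $(X^{n}, \Lambda^{n})$ denotes the mean-field solution to~\eqref{e:system-n}. Then, fixing $n \geq n_{\varepsilon}$, I would invoke Lemma~\ref{l:particle-compactness} and Proposition~\ref{p:limit-PDE-solved} (applied to~\eqref{e:complicated system}) to deduce that the random empirical measures $\Sigma^{n, N}_{T_{\varepsilon}}$ converge in law, as $N \to \infty$, to the deterministic measure $\Sigma^{n}_{T_{\varepsilon}} = \Law(X^{n}_{T_{\varepsilon}}, \Lambda^{n}_{T_{\varepsilon}})$ in $(\cP_{2}(\cP_{2}(\R^{d} \times [0, 1])), W_{2})$. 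Since the limit is a Dirac mass, this upgrades to convergence in probability of $\Sigma^{n, N}_{T_{\varepsilon}}$ to $\Sigma^{n}_{T_{\varepsilon}}$ with respect to $W_{2}$, which in turn gives $m_{2}^{2}(\Sigma^{n, N}_{T_{\varepsilon}}) \to m_{2}^{2}(\Sigma^{n}_{T_{\varepsilon}})$ in probability.

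The final step is to upgrade this to convergence of expectations. Using the uniform fourth-moment bounds of Proposition~\ref{p:well-posed-finite} (which, for fixed $n$, hold uniformly in $N$ with a constant depending only on $T_{\varepsilon}$ and $\E(\|\widehat{X}_{0}\|^{4})$), one obtains uniform integrability of $m_{2}^{2}(\Sigma^{n, N}_{T_{\varepsilon}})$ with respect to $\mathbb{P}$, and hence
\begin{equation*}
\lim_{N \to \infty} \E\bigl(m_{2}^{2}(\Sigma^{n, N}_{T_{\varepsilon}})\bigr) = m_{2}^{2}(\Sigma^{n}_{T_{\varepsilon}}) = \E\bigl(\|X^{n}_{T_{\varepsilon}}\|^{2}\bigr) \leq \frac{\varepsilon}{2}.
\end{equation*}
It then suffices to pick $N_{\varepsilon, n} \in \mathbb{N}$ large enough so that the residual error from the $L^1$-convergence is below $\varepsilon/2$. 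I expect the main technical obstacle to be this very passage from $W_{2}$-convergence in probability of the random measures to $L^{1}(\Omega)$-convergence of the moment functional; the uniformity in $n$ of the constant $M_{f_{n}} = M$ in~$(f2)$ is essential to secure the needed uniform moment control, while the $n$-dependence of $L_{f_{n}, R}$ in~$(f1)$ is harmless because $n$ is frozen before the finite-particle limit is taken.
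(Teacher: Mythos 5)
Your proposal is correct and follows essentially the same route as the paper: first fix $T_{\varepsilon}$ and $n_{\varepsilon}$ via Theorem~\ref{t:systems are close}, then for fixed $n$ pass to the $N\to\infty$ limit using Lemma~\ref{l:particle-compactness} together with the identification of the (deterministic) limit from Proposition~\ref{p:limit-PDE-solved}, and finally upgrade to convergence of expectations of $m_{2}^{2}(\Sigma^{n,N}_{T_{\varepsilon}})$ through the uniform fourth-moment bounds \eqref{e:moments-finite}--\eqref{e:limit-moments}. The only cosmetic difference is that you argue via convergence in law to a constant, hence in probability, plus uniform integrability, whereas the paper invokes the Skorokhod representation to get almost sure $W_{2}$-convergence before applying the same moment bounds; both are equivalent and your remark that the $n$-dependence of $L_{f_{n},R}$ is immaterial (only $M_{f_{n}}=M$ matters) matches the paper's observation.
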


\begin{proof}
For $\varepsilon>0$, we fix ${T}_{{\varepsilon}} >0$ and $\bar{n}_{{\varepsilon}} \in \mathbb{N}$ such that
\begin{align}
\label{e:8010}
\E ( \| X^{n}_{\overline{T}_{{\varepsilon}}} \|^{2}) \leq \frac{\varepsilon}{2} \qquad \text{for every $n \geq \bar{n}_{{\varepsilon}}$\,,}
\end{align}
 as done in Theorem~\ref{t:systems are close}. We further notice that the solutions $(X^{i, n, N}, \Lambda^{i, n, N})$ satisfy~\eqref{e:moments-finite} and~\eqref{e:limit-moments} uniformly with respect to $n$, thanks to the assumption $M_{f_{n}} = M$ for every $n \in \mathbb{N}$. In view of Lemma~\ref{l:particle-compactness} and Skorohod representation theorem (cf.~\eqref{e:limit-moments}--\eqref{e:limit-moments-2} and related discussion) we have that $\Sigma^{n, N}_{\overline{T}_{\frac{\varepsilon}{2}}} $ converges to $\Sigma^{n, N}_{\overline{T}_{\frac{\varepsilon}{2}}}$ in $(\cP_{2} (\R^{d}), W_{2})$ as $N \to \infty$ for every $n \in \mathbb{N}$. Furthemore, in view of~\eqref{e:limit-moments} we have that 
 \begin{displaymath}
 \lim_{N \to \infty}\, \E \Big( m_{2}^{2} \big(\Sigma^{n, N}_{\overline{T}_{{\varepsilon}}}  \big) \Big) = \E \Big( m_{2}^{2} \big(\Sigma^{n}_{\overline{T}_{{\varepsilon}}} \big) \Big) \qquad \text{for every $n \in \mathbb{N}$.}
 \end{displaymath}
 This, together with~\eqref{e:8010}, concludes the proof of the corollary.
\end{proof}

\section{Application to Consensus Based Optimization}
\label{s:application}

In this section we show how the applications to CBO~\cite{Carrilloetal, FKR}  fall into our theoretical framework. Let $g\colon \R^{d} \to \R^{d}$ be such that there exists $M_{g}>0$ satisfying 
\begin{equation}
\label{e:linear-g}
     \|g(x) \| \leq M_{g} ( 1 + \|x \|) \qquad \text{for every $x \in \R^{d}$.}
\end{equation}
For $\mu \in \cP_{1} (\R^{d})$ and $n \in \mathbb{N}$ we consider
\begin{align}
\omega_{n} (x) & \coloneqq  e^{-n \cE(x) }  \,, \label{e:def-omega-n}\\
f_{n} (\mu) & \coloneqq \frac{1}{\| \omega_{n}\|_{L^{1}_{\mu}}} \int_{\R^{d}} g(x) e^{-n\cE(x)} \, \de \mu(x)\,. \label{e:def-fn}
\end{align}
Then, the following proposition holds.

\begin{proposition}
\label{p:5.1}
Let $g\colon \R^{d} \to \R^{d}$ be Lipschitz continuous and let $\cE \in C^{1}( \R^{d} ; [0, +\infty))$  be such that 
    \begin{equation}
    \label{e:hp-notrestrictive}
        \cE(0) = \min_{x \in \R^{d}} \cE(x)\,.
    \end{equation}
    and there exist $c_{1} >0$ and $\gamma >0$ such that
    \begin{align}
     |  \nabla \cE(x)| & \leq c_{1} ( 1 + \| x\|^{\gamma}) \qquad \text{for every $x \in \R^{d}$}. \label{e:nabla-E-gamma}
    \end{align}
    Assume moreover that either of the two conditions is fulfilled:
    \begin{itemize}
    \item[$(i)$] There exist $c_{2}\,, c_{3} >0$ and $\nu \geq 0$ such that for every $x \in \R^{d}$
        \begin{align}
    \label{e:growth-E-nu-2}
           c_{2} \| x \|^{\nu} & \leq \cE(x) \leq c_{3} (1 + \| x\|^{\nu})\,,
    \end{align}
      \item[$(ii)$] $g$ is bounded in~$\R^{d}$ and there exist $c_{2}>0$ and $\nu >0$ such that
      \begin{align}
    \label{e:growth-E-nu}
          \cE(x) & \geq c_{2} \| x \|^{\nu}\,,
    \end{align}
    \end{itemize}
    Then, there exist $M\geq 0$ and, for every $n \in \mathbb{N}$ and $R>0$, a constant $L_{n, R}>0$ such that for every $\mu, \mu_{1}, \mu_{2} \in \cP_{1} (\R^{d})$ with $m_{1} (\mu_{i}) \leq R$ for $i = 1, 2$
    \begin{align}
    & \| f_{n} (\mu_{1}) - f_{n} (\mu_{2}) \| \leq L_{n, R} \, W_{1} (\mu_{1}, \mu_{2}) \,, \label{e:f1}\\
   &  \| f_{n} (\mu) \| \leq M ( 1 + m_{1} (\mu))\,. \label{e:f2}
    \end{align}
    In particular, $f_{n}$ satisfies $(f1)$ and $(f2)$ for every $n$, and the constant~$M_{f_{n}}$ in $(f2)$ can be chosen to be independent of~$n \in \mathbb{N}$.
    \end{proposition}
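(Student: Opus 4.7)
The plan is to establish $(f2)$ first, since the bound on $\|f_n(\mu)\|$ will be needed for the Lipschitz estimate $(f1)$. The key observation is that $f_n(\mu) = \int g(x)\,\de \pi_n^\mu(x)$, where $\pi_n^\mu \in \cP(\R^d)$ is the Gibbs-weighted probability measure with density $\omega_n/\|\omega_n\|_{L^1_\mu}$ with respect to $\mu$. Hence $f_n(\mu)$ is a probability-weighted average of $g$, and the linear growth of $g$ yields $\|f_n(\mu)\| \leq M_g(1+m_1(\pi_n^\mu))$, reducing $(f2)$ to a uniform-in-$n$ bound on $m_1(\pi_n^\mu)$ in terms of $m_1(\mu)$.

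For $(f2)$, the argument splits along the hypotheses. In case $(ii)$, $g$ is bounded by some $G \geq 0$, whence $\|f_n(\mu)\| \leq G$ and $(f2)$ holds with $M=G$ trivially. In case $(i)$, both sides of the polynomial sandwich $c_2\|x\|^\nu \leq \mathcal{E}(x) \leq c_3(1+\|x\|^\nu)$ must be used: the lower bound renders $\|x\|\omega_n(x) \leq \|x\|e^{-nc_2\|x\|^\nu}$ uniformly bounded in $x$, controlling the numerator $\int \|x\|\omega_n\,\de\mu$ independently of $\mu$; the upper bound, combined with Markov's inequality $\mu(B_{2m_1(\mu)})\geq 1/2$, yields a lower bound on the partition function $Z_n(\mu) = \int \omega_n\,\de\mu$. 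A domain splitting into $B_R \cup B_R^c$ with $R$ chosen adaptively from $m_1(\mu)$ then produces the required linear growth, with constant independent of $n$.

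For $(f1)$ I would apply the ``quotient'' identity
\begin{equation*}
    f_n(\mu_1) - f_n(\mu_2) = \frac{\int g\omega_n\,\de(\mu_1-\mu_2)}{Z_n(\mu_1)} - f_n(\mu_2)\,\frac{\int \omega_n\,\de(\mu_1-\mu_2)}{Z_n(\mu_1)},
\end{equation*}
together with Kantorovich--Rubinstein duality $|\int \psi\,\de(\mu_1-\mu_2)| \leq \mathrm{Lip}(\psi)\,W_1(\mu_1,\mu_2)$. The remaining ingredients are: global Lipschitz estimates for $\omega_n$ and $g\omega_n$ on $\R^d$, obtained by computing $\nabla\omega_n = -n(\nabla\mathcal{E})\omega_n$ and invoking $|\nabla\mathcal{E}(x)| \leq c_1(1+\|x\|^\gamma)$ with the coercivity of $\mathcal{E}$ (in either hypothesis) damping the polynomial factor; a uniform lower bound $Z_n(\mu_i) \geq c_{n,R} > 0$ when $m_1(\mu_i)\leq R$, obtained from Markov and continuity of $\mathcal{E}$ on $B_{2R}$; and the bound $\|f_n(\mu_2)\| \leq M(1+R)$ from $(f2)$ already proved.

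The main obstacle will be securing the $n$-independence of $M$ in case $(i)$ of $(f2)$: a naive Markov-based lower bound on $Z_n(\mu)$ produces an exponential factor $e^{nC(1+m_1(\mu)^\nu)}$ that would destroy uniformity in $n$. The resolution must exploit the uniform boundedness of $\|x\|\omega_n(x)$ coming from the lower polynomial bound on $\mathcal{E}$, combined with a careful choice of splitting radius so that the exponential blow-up in the denominator is absorbed by the numerator's smallness. Case $(ii)$ avoids this difficulty entirely via the boundedness of $g$, which explains the dichotomy in the proposition's hypotheses.
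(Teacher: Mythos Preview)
Your treatment of $(f1)$ and of $(f2)$ in case $(ii)$ matches the paper's proof: the same quotient decomposition, Kantorovich--Rubinstein duality, Lipschitz bound on $\omega_n$ via $\nabla\omega_n=-n(\nabla\mathcal E)\omega_n$, and the Markov-based lower bound $Z_n(\mu)\ge\tfrac12 e^{-n\mathcal E_{2R}}$ on $\{m_1(\mu)\le R\}$.

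For $(f2)$ in case $(i)$ your route differs from the paper's and is in fact more elementary. The paper adapts the dyadic annulus argument of \cite[Lemma~3.3]{Carrilloetal}: it estimates $\eta_n(\{\mathcal E\ge k\})\le e^{-n(k-\ell)}\mu(\{\mathcal E\ge k\})/\mu(\{\mathcal E<\ell\})$, decomposes $\{\|x\|\ge S\}$ into shells $A_i=\{2^{i-1}S\le\|x\|<2^iS\}$, applies the two-sided bound \eqref{e:growth-E-nu-2} and Chebyshev to control the ratios, and sums a geometric-type series after an adaptive choice $S\sim 1+m_1(\mu)$. Your single-split idea achieves the same conclusion more directly, but one point in your sketch needs sharpening. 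The ``uniform boundedness of $\|x\|\omega_n(x)$'' is not the mechanism that absorbs the exponential factor; the global supremum $\sup_x\|x\|e^{-nc_2\|x\|^\nu}$ decays only like $n^{-1/\nu}$, which cannot beat $e^{nc_3(1+(2m_1(\mu))^\nu)}$. What does the job is the pointwise bound $\omega_n(x)\le e^{-nc_2 R^\nu}$ on $B_R^c$: choosing $R=\alpha(1+m_1(\mu))$ with $\alpha^\nu>2c_3/c_2$ forces $c_2R^\nu\ge c_3\big(1+(2m_1(\mu))^\nu\big)$, so the outer contribution to $m_1(\pi_n^\mu)$ is at most $2m_1(\mu)\,e^{n[c_3(1+(2m_1(\mu))^\nu)-c_2R^\nu]}\le 2m_1(\mu)$, while the inner part gives $\le R$. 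This yields $m_1(\pi_n^\mu)\le C(1+m_1(\mu))$ with $C$ independent of $n$, which is exactly what you need. So your strategy is sound; just replace the appeal to global boundedness by the exponential smallness on the outer shell.
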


\begin{remark}
    Notice that assumption~\eqref{e:hp-notrestrictive} is not restrictive, as it may be guaranteed up to translation of~$\cE$.
\end{remark}

\begin{proof}[Proof of Proposition~\ref{p:5.1}]
We start working under assumptions $(i)$. Let us fix $n \in \mathbb{N}$ and assume that \eqref{e:nabla-E-gamma} and~\eqref{e:growth-E-nu-2} are satisfied. We show that \eqref{e:f1} holds. Let us fix $R>0$ and let $\mu_{1}, \mu_{2} \in \cP_{1} (\R^{d})$ be such that $m_{1} (\mu_{i}) \leq R$ for $i = 1, 2$. We write
\begin{align}
\label{e:ff1}
 \| f_{n} (\mu_{1}) - f_{n} (\mu_{2}) \| & \leq \bigg\| \frac{1}{\| \omega_{n}\|_{L^{1}_{\mu_{1}}}} \int_{\R^{d}} g(x) e^{-n\cE(x)} \, \de (\mu_{1} - \mu_{2}) \bigg\| 
 \\
 &
 \qquad + \frac{ \big| \| \omega_{n}\|_{L^{1}_{\mu_{1}}} - \| \omega_{n}\|_{L^{1}_{\mu_{2}}} \big|}{\| \omega_{n}\|_{L^{1}_{\mu_{1}}} \, \| \omega_{n}\|_{L^{1}_{\mu_{2}}}} \int_{\R^{d}} g(x) e^{-n\cE(x)} \, \de \mu_{2} (x) 
 =: \mathrm{I} + \mathrm{II} \,. \nonumber
 \end{align}
 We notice that the map $e^{-n \cE(x)}$ is Lipschitz continuous. Indeed, by~\eqref{e:growth-E-nu}--\eqref{e:nabla-E-gamma} we notice that
 \begin{displaymath}
 \| \nabla (e^{-n \cE(x)}) \| =  n e^{-n \cE(x)} \|\nabla \cE(x) \| \leq c_{2} n e^{-n c_{1} \| x\|^{\nu}} \big( 1 + \| x\|^{\gamma})\,.
 \end{displaymath} 
 Thus, $ \|\nabla (e^{-n \cE(x)}) \|$ is bounded in~$\R^{d}$ for every $n$. As a consequence, the map $g(x) \, e^{-n \cE(x)}$ is Lipschitz continuous in~$\R^{d}$. Furthermore, we have that
 \begin{displaymath}
 \| \omega_{n} \|_{L^{1}_{\mu_{1}}} \geq \int_{B_{2R}} e^{-n \cE(x)} \, \de \mu_{1} \geq \frac{1}{2} e^{-n \cE_{2R}}\,,
 \end{displaymath}
 where we have set $\cE_{2R} \coloneqq \max_{\overline{B}_{2R}} \cE$ and we have used that $\mu_{1} (B_{2R}) \geq \frac{1}{2}$, as a consequence of Chebyschev inequality. Notice that the same inequality holds for~$\mu_{2}$. Thus, we infer that
 \begin{equation}
 \label{e:ff2}
 \mathrm{I} \leq 2 e^{n \cE_{2R}} {\rm Lip} (g e^{-n \cE}) W_{1} (\mu_{1}, \mu_{2})\,.
 \end{equation}
Since $g$ is Lipschitz continuous, there exists $M_{g} \geq 0$ such that $\|g(x)\| \leq M_{g} ( 1 + \| x\|)$. Hence, we estimate
\begin{equation}
\label{e:ff3}
   \mathrm{II} \leq 4 e^{2n \cE_{2R}}  M_{g} \Big( 1 + \sup_{x \in \R^{d}} (  \|x \| e^{-n \cE(x)} ) \Big)  {\rm Lip} ( e^{-n \cE}) W_{1} (\mu_{1}, \mu_{2})\,.
\end{equation}
Combining~\eqref{e:ff1}--\eqref{e:ff3} we infer~\eqref{e:f1}.

The proof of~\eqref{e:f2} is an adaptation of the arguments of~\cite[Lemma~3.3]{Carrilloetal}. Let us fix~$\mu \in \cP_{1} (\R^{d})$. To shorten the notation, let us set $\eta_{n}\coloneqq \frac{ \omega_{n} \mu }{ \| \omega_{n} \|_{L^{1}_{\mu}}}$. We notice that it holds
\begin{align}
    \label{e:etan1}
    \| f_{n} (\mu) \| \leq \int_{\R^{d}} \| g(x) \|  \, \di \eta_{n} (x) \leq M_{g} \int_{\R^{d}} ( 1 + \| x \|) \, \di \eta_{n} (x) \leq M_{g} \big( 1 + m_{1} (\eta_{n})\big)   \,. 
\end{align}
Hence, we have to estimate $m_{1} (\eta_{n})$ in terms of $m_{1} (\mu)$. For every $k, \ell \in \R$ we have that
\begin{align}
\label{e:etan2}
   \eta_{n} ( \{ \cE \geq k \} )  & = \frac{1}{ \| \omega_{n} \|_{ L^{1}_{\mu}}} \int_{ \{ \cE \geq k \} } \omega_{n} (x) \, \di \mu(x) \leq e^{-nk} \, \frac{ \mu ( \{\cE \geq k\} ) }{ \| \omega_{n} \|_{ L^{1}_{\mu} } } 
   \leq e^{-n(k - \ell) } \frac{ \mu ( \{ \cE \geq k \})}{ \mu (\{ \cE < \ell \} )}\,. 
\end{align}
For every $S>0$ we write
\begin{align}
    \label{e:etan3}
    m_{1} (\eta_{n}) \leq S \, \eta_{n} (B_{S}) + \int_{\R^{d} \setminus B_{S}} \| x \| \, \di \eta_{n} = S + \int_{\R^{d}\setminus B_{S}}  ( \| x \| - S) \, \di \eta_{n}\,.
\end{align}
We introduce $A_{i}\coloneqq \{ x \in \R^{d}: \, 2^{i-1} S \leq \|x \| < 2^{i} S\}$. By the growth conditions~\eqref{e:growth-E-nu} we have that $A_{i} \subseteq \{ \cE \geq c_{2} 2^{\nu(i-1)} S^{\nu}\}$. Hence, we continue in~\eqref{e:etan3} with
\begin{align}
    \label{e:etan4}
     m_{1} (\eta_{n})  & \leq S + \sum_{i=1}^{\infty} \int_{A_{i}} ( \| x \| - S) \, \di \eta_{n} \leq S + S \sum_{i=1}^{\infty} ( 2^{i} - 1) \eta_{n} (A_{i}) 
     \\
     &
     \nonumber \leq S + S \sum_{i=1}^{\infty} ( 2^{i} - 1) \, \eta_{n} ( \{ \cE \geq c_{2} 2^{\nu(i-1)} S^{\nu} \})
     \\
     &
     \nonumber\leq S + S \sum_{i=1}^{\infty} ( 2^{i} - 1) e^{-n c_{2} 2^{\nu (i-2)} ( 2^{\nu} - 1) S^{\nu}} \frac{ \mu (\{ \cE \geq c_{2} 2^{\nu(i-1)} S^{\nu} \} )}{\mu (\{ \cE < c_{2} 2^{\nu(i-2)} S^{\nu}\} ) }\,,
\end{align}
    where in the last inequality we have used~\eqref{e:etan2} with $k = c_{2} 2^{\nu(i-1)} S^{\nu}$ and $\ell = c_{2} 2^{\nu(i-2)} S^{\nu}$.

By Chebyschev inequality and by~\eqref{e:growth-E-nu} we further estimate for $k >0$
\begin{align}
    \label{e:estimate-mu-example}
    \mu ( \{ \cE \geq k \}) & = \mu ( \{ \cE^{\frac{1}{\nu}} \geq k^{\frac{1}{\nu}} \}) \leq \frac{1}{k^\frac{1}{\nu}} \int_{\R^{d}} \cE^{\frac{1}{\nu}} (x) \, \di \mu \leq \frac{c_{3}^{\frac{1}{\nu}}}{{k^\frac{1}{\nu}}} \int_{\R^{d}} ( 1 + \| x \|^{\nu}) ^{\frac{1}{\nu}} \, \di \mu
    \\
    &
    \leq \frac{\overline{c}}{k^\frac{1}{\nu}} \int_{\R^{d}} (1 + \| x \|) \, \di \mu = \frac{\overline{c}}{k^{\frac{1}{\nu}}} ( 1 + m_{1} (\mu))\,, \nonumber
\end{align}
for a positive constant~$\overline{c}>0$ only depending on~$c_{3}$ and on $\nu$. Inequality~\eqref{e:estimate-mu-example} implies that
\begin{align}
    \label{e:estimate-mu-example2}
    \mu \bigg( \bigg\{ \cE < \frac{k}{2^{\nu}} \bigg\}\bigg) & = 1 - \mu \bigg( \bigg\{ \cE \geq \frac{k}{2^{\nu}} \bigg\}\bigg) \geq 1 - \frac{2\overline{c}}{{k^\frac{1}{\nu}}} ( 1 + m_{1} (\mu))  = \frac{k^\frac{1}{\nu} - 2\overline{c}( 1 + m_{1} (\mu)) }{k^\frac{1}{\nu}}\,.
\end{align}
Combining~\eqref{e:estimate-mu-example}--\eqref{e:estimate-mu-example2} with $k = c_{2} 2^{\nu(i-1)} S^{\nu}$ and inserting in~\eqref{e:etan4}, we obtain
\begin{align}
    \label{e:etan5}
    m_{1} (\eta_{n}) \leq S + S\sum_{i=1}^{\infty} ( 2^{i} - 1) e^{-n c_{2} 2^{\nu (i-2)} ( 2 ^{\nu} - 1) S^{\nu}} \, \frac{\overline{c} ( 1 + m_{1} (\mu))}{c_{2}^{\frac{1}{\nu}} 2^{(i-1)} S  - 2\overline{c}( 1 + m_{1} (\mu)) }\,.
\end{align}
We fix $S = 1 + 2c_{2}^{-\frac{1}{\nu}} \overline{c} ( 1 + m_{1} (\mu))$ and continue in~\eqref{e:etan5} with
\begin{align}
    \label{e:etan6}
     m_{1} (\eta_{n}) &  \leq S + S\sum_{i=1}^{\infty} ( 2^{i} - 1) e^{-n c_{2} 2^{\nu (i-2)} (2^{\nu} - 1) S^{\nu}} \frac{\overline{c} ( 1 + m_{1} (\mu))}{c_{2}^{\frac{1}{\nu}} 2^{(i-1)}   + (2^{i} - 2)\overline{c}( 1 + m_{1} (\mu)) }
     \\
     &
     \leq S + S\sum_{i=1}^{\infty} ( 2^{i} - 1) e^{-n c_{2} 2^{\nu (i-2)} (2^{\nu}-1) S^{\nu}} \frac{\overline{c} ( 1 + m_{1} (\mu))}{c_{2}^{\frac{1}{\nu}} 2^{(i-1)} }\nonumber 
     \\
     &
     = S + \frac{2 \overline{c} ( 1 + m_{1} (\mu)) S}{c_{2}^{\frac{1}{\nu}} } \sum_{i=1}^{\infty} ( 1 - 2^{-i} ) e^{-n c_{2} 2^{\nu (i-2)} (2^{\nu} - 1)  S^{\nu}}  \nonumber
     \\
     &
     \leq  S + \frac{2 \overline{c} ( 1 + m_{1} (\mu)) S}{c_{2}^{\frac{1}{\nu}} } \int_{2^{-\nu}}^{+\infty} e^{-n c_{2} (2^{\nu} - 1)  S^{\nu} \sigma } \, \di \sigma
     = S + \frac{2 \overline{c} ( 1 + m_{1} (\mu)) S}{c_{2}^{\frac{1}{\nu}} } \, \frac{e^{-n c_{2} ( 1 - 2^{-\nu}) S^{\nu}}}{n c_{2} (2^{\nu} - 1)  S^{\nu}} \nonumber
     \\
     &
     = S + \frac{2 \overline{c} ( 1 + m_{1} (\mu)) S^{1-\nu}}{n (2^{\nu} - 1) c_{2}^{1 + \frac{1}{\nu}} } \, e^{-n c_{2} ( 1 - 2^{-\nu})  S^{\nu}} \nonumber\,.
\end{align}
We notice that the map $t \mapsto t^{1-\nu} e^{-n c_{2}^{\nu} ( 1 - 2^{-\nu})  t^{\nu}}$ is bounded in $[1, +\infty)$, uniformly with respect to $n$. Since $S \geq 1$, we infer from~\eqref{e:etan6} that there exists $\widetilde{c}>0$ independent of~$n$ such that
\begin{displaymath}
    m_{1} (\eta_{n}) \leq \widetilde{c} \, (1 + m_{1}(\mu))\,.
\end{displaymath}
Together with~\eqref{e:etan1}, this concludes the proof of~\eqref{e:f2}.

Let us now assume that $(ii)$ holds. Then,~\eqref{e:f2} follows immediately from the boundedness of~$g$ with the choice $M  = \| g\|_{\infty}$. The proof of~\eqref{e:f1} works as in the case $(i)$. By the boundedness of $g$ we have that $ \| g(x) e^{-n \cE(x)} \| \leq \| g\|_{\infty}$ for every $n$. We can simplify the estimate of $\mathrm{II}$ in~\eqref{e:ff3} with
 \begin{equation}
 \label{e:II-example}
 \mathrm{II} \leq 4 e^{2n \cE_{2R}} \| g\|_{\infty} {\rm Lip} ( e^{-n \cE}) W_{1} (\mu_{1}, \mu_{2})\,;
 \end{equation} 
 the proof is concluded.
 \end{proof}

The next proposition deals instead with property $(f3)$ introduced in Section~\ref{s:convergence}.

\begin{proposition}
\label{p:f3-example}
Let $g\colon \R^{d} \to \R^{d}$ satisfy
\begin{displaymath}
    \| g(x) \| \leq M_{g}\,  \| x \| \qquad \text{for every $x \in \R^{d}$}
\end{displaymath}
for some $M_{g}>0$ and let $\cE\colon \R^{d}\to [0, +\infty)$ be continuous, fulfill~\eqref{e:hp-notrestrictive}, and such that there exist~$c, \nu >0$ with
\begin{equation}
\label{e:lower-bound}
   c \, \| x \|^{\nu} \leq \cE(x) \qquad \text{for every $x \in \R^{d}$}.
\end{equation}
Then, the sequence~$f_{n}$ defined in~\eqref{e:def-omega-n}--\eqref{e:def-fn} complies with~$(f3)$.
\end{proposition}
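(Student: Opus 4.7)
The plan is to prove, for every $\varepsilon>0$, the existence of $n_\varepsilon\in\N$ such that $\|f_n(\mu)\|\leq\varepsilon$ for every $n\geq n_\varepsilon$ and every $\mu\in E_{\ell,R}$. Since $f_n(\mu)$ is invariant under the addition of a constant to $\cE$, up to replacing $\cE$ with $\cE-\cE(0)$ (the normalization alluded to in the Remark following Proposition~\ref{p:5.1}), I may assume $\cE(0)=0$. From the linear growth of $g$,
\begin{equation*}
\|f_n(\mu)\| \leq \frac{M_g}{\|\omega_n\|_{L^1_\mu}} \int_{\R^d} \|x\| \, e^{-n\cE(x)} \, \di\mu(x),
\end{equation*}
and my aim is to control the right-hand side uniformly for $\mu\in E_{\ell,R}$ as $n\to\infty$.

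Given $r>0$, I split the numerator at $\|x\|=r$. The integral on $B_r$ is bounded by $r\|\omega_n\|_{L^1_\mu}$, which produces a contribution $M_g r$ to $\|f_n(\mu)\|$. For the tail, the growth $c\|x\|^\nu\leq\cE(x)$ together with the elementary inequality $c\|x\|^\nu\geq cr^\nu/2+c\|x\|^\nu/2$, valid on $\{\|x\|\geq r\}$, yields for $n\geq 1$
\begin{equation*}
\int_{\R^d\setminus B_r} \|x\| \, e^{-n\cE(x)} \, \di\mu(x) \leq e^{-ncr^\nu/2} \int_{\R^d} \|x\| \, e^{-nc\|x\|^\nu/2} \, \di\mu(x) \leq K \, e^{-ncr^\nu/2},
\end{equation*}
with $K\coloneqq\sup_{t\geq 0} t\,e^{-ct^\nu/2}<+\infty$ depending only on $c$ and $\nu$.

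For the denominator, the hypothesis $\mu(B_\delta)\geq\ell(\delta)$ and the continuity of $\cE$ at $0$ provide
\begin{equation*}
\|\omega_n\|_{L^1_\mu} \geq e^{-n\cE_\delta}\,\ell(\delta), \qquad \cE_\delta \coloneqq \max_{\overline{B}_\delta}\cE \to \cE(0)=0 \ \text{as}\ \delta\to 0^+.
\end{equation*}
Combining the two estimates, I obtain
\begin{equation*}
\|f_n(\mu)\| \leq M_g r + \frac{M_g K}{\ell(\delta)} \, e^{-n(cr^\nu/2 - \cE_\delta)}.
\end{equation*}
Given $\varepsilon>0$, I would fix first $r$ with $M_g r<\varepsilon/2$ and then $\delta$ so small that $\cE_\delta<cr^\nu/4$: the second term decays exponentially in $n$, uniformly in $\mu\in E_{\ell,R}$, and can be made $\leq\varepsilon/2$ for $n$ sufficiently large.

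The main obstacle is the interplay between the two small parameters $r$ and $\delta$: $r$ must be small to control $M_g r$, while simultaneously $cr^\nu/2-\cE_\delta>0$ is required for the exponential decay to be effective. This is achievable precisely because the continuity of $\cE$ at its minimum $\cE(0)=0$ allows $\cE_\delta$ to be made arbitrarily smaller than any prescribed $cr^\nu/4$ by shrinking $\delta$ alone, without having to touch $r$. Note that the bound $m_2(\mu)\leq R$ does not enter the estimates above; the uniformity over $E_{\ell,R}$ comes entirely from the mass lower bound $\mu(B_\delta)\geq\ell(\delta)$, which is the only $\mu$-dependent quantity in the final inequality.
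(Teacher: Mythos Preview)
Your proof is correct and follows essentially the same route as the paper's: split the numerator integral into a near part (controlled by the splitting radius) and a far part (controlled via the coercivity $\cE(x)\geq c\|x\|^\nu$), and bound the denominator from below using the mass hypothesis $\mu(B_\delta)\geq\ell(\delta)$ together with continuity of~$\cE$ at the minimizer. The only cosmetic differences are that the paper ties the two radii to a single~$\eps$ and handles the tail via the monotonicity of $t\mapsto te^{-nct^\nu}$ for large~$n$, whereas you decouple $r$ and~$\delta$ and use the uniform bound $K=\sup_{t\geq0}te^{-ct^\nu/2}$; both yield the same exponential decay in~$n$, and your observation that the moment bound $m_2(\mu)\leq R$ plays no role is also implicit in the paper's argument.
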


\begin{proof}
Let us fix $\ell \colon (0, +\infty) \to (0, 1)$ and $\eps>0$. Since $\cE$ is continuous and $\cE(0) = 0$, there exists $r_{\eps} >0$ such that 
\begin{displaymath}
\cE_{r_{\eps}} \coloneqq \max_{\overline{B}_{r_{\eps}}} \, \cE < \frac{\eps}{2}\,.
\end{displaymath}
Let us further denote by $\theta_{\eps}\coloneqq \big( \frac{ \eps }{c}\big)^{\frac{1}{\nu}}$, where $c$ and~$\nu$ are the parameters appearing in~\eqref{e:lower-bound}. In particular, notice that for every $x \in \R^{d} \setminus \overline{B}_{\theta_{\eps}}$ it holds that
\begin{equation}
\label{e:useful}
\cE(x) \geq c \, \| x \|^{\nu} \geq c \, \theta_{\eps}^{\nu} \geq \eps > \frac{\eps}{2} + \cE_{r_{\eps}}\,.
\end{equation}
Then, for every $\mu \in E_{\ell, R}$ we estimate
\begin{align}
\label{e:fn-to-0}
\| f_{n}(\mu) \| & \leq \frac{1}{\| \omega_{n}\|_{L^{1}_{\mu}} } \int_{\R^{d}} \| g(x) \| e^{-n\cE(x) } \, \di \mu(x) 
\\
&
= \frac{1}{\| \omega_{n}\|_{L^{1}_{\mu}}} \int_{B_{\varrho_{\eps}}} \| g(x) \| e^{-n\cE(x) } \, \di \mu(x) + \frac{1}{\| \omega_{n}\|_{L^{1}_{\mu}}} \int_{\R^{d} \setminus B_{\varrho_{\eps}}} \| g(x) \| e^{-n\cE(x) } \, \di \mu(x)  \nonumber
\\
&
\leq  M_{g} \varrho_{\eps} + \frac{ M_{g} \,  e^{n\cE_{r_{\eps}}}}{\ell(r_{\eps})}  \int_{\R^{d} \setminus B_{\varrho_{\eps}}} \| x \| e^{-n c \| x \|^{\nu}} \, \de \mu(x)\,. \nonumber
\end{align}
We notice that the map $t \mapsto t e^{-n c \| x \|^{\nu}}$ reaches its maximum in $t_{n} \in (0, +\infty)$ and $t_{n} \to 0$ as $n \to \infty$. Hence, for $n$ sufficiently large (depending on~$\varepsilon$) we have that $t_{n}<\theta_{\eps}$ and
\begin{displaymath}
    \max_{\R^{d} \setminus B_{\theta_{\eps}}} \, \| x \| e^{-n c \| x \|^{\nu}} = \varrho_{\eps} e^{-n c \theta_{\eps}^{\nu}}\,.
\end{displaymath}
Hence, we continue in~\eqref{e:fn-to-0} with
\begin{align*}
\| f_{n}(\mu) \| & \leq M_{g} \theta_{\eps} +  \frac{ M_{g} \, \theta_{\eps} e^{n\cE_{r_{\eps}}}}{\ell(r_{\eps})} e^{-n c \theta_{\eps}^{\nu}} \leq \bigg( M_{g}  + \frac{M_{g} \,e^{-n \frac{\eps}{2}}}{\ell(r_{\eps})} \bigg) \bigg(  \frac{ \eps }{c} \bigg) ^{\frac{1}{\nu}}\,,
\end{align*}
where, in the last inequality, we have used~\eqref{e:useful}. This yields $(f3)$ for the sequence~$f_{n}$.
\end{proof}

\bigskip
\noindent\textbf{Acknowledgments}
All the authors are members of the GNAMPA group of INdAM.
The work of S.A. and F.S.~is part of the MUR - PRIN 2022, project \emph{Variational Analysis of Complex Systems in Materials Science, Physics and
Biology}, No. 2022HKBF5C, funded by European Union Next Generation EU, and further supported by Gruppo Nazionale per l'Analisi Matematica, la Probabilit\`a e le loro Applicazioni (GNAMPA-INdAM, Project 2025: DISCOVERIES - \emph{Difetti e Interfacce in Sistemi Continui: un'Ottica Variazionale in Elasticit\`{a} con Risultati Innovativi ed Efficaci Sviluppi}). S.A.~also aknowledges the support of the the FWF Austrian Science Fund through the Project 10.55776/P35359 and of the University of Naples Federico II through the FRA Project "ReSinApas".
The work of M.M.~was in part carried out within the MUR - PRIN 2022 project \emph{Geometric-Analytic Methods for PDEs and Applications (GAMPA)}, ref.~2022SLTHCE – cup E53D2300588 0006 - funded by European Union - Next Generation EU. 
This manuscript reflects only the authors’ views and opinions and the Ministry cannot be considered responsible for them”. This research fits within the scopes of the GNAMPA 2022 Project \emph{Approccio multiscala all’analisi di modelli di interazione}.

\bibliographystyle{siam}

\end{document}